\newcommand{\R}{\mathbb{R}}
\newcommand{\E}{\mathbb{E}}
\newcommand{\N}{\mathbb{N}}
\newcommand{\trans}{^{\top}}
\newcommand{\bs}{\boldsymbol}
\title{A Comprehensive Bayesian Treatment of the Universal Kriging model with Matérn correlation kernels}
\author{Joseph Muré}
\DeclareMathOperator{\Tr}{Tr}
\DeclareMathOperator{\V}{Var}
\DeclareMathOperator{\Cov}{Cov}
\newtheorem{thm}{Theorem}
\newtheorem{cor}[thm]{Corollary}
\newtheorem{prop}[thm]{Proposition}
\newtheorem{lem}[thm]{Lemma}
\newtheorem{defn}[thm]{Definition}
\newtheorem{hyp}{Assumption}
\theoremstyle{remark}
\newtheorem*{rmq}{Remark}
\let\lvert=|\let\rvert=|
\newcommand{\corr}{ \bs{ \Sigma }_{ \bs{ \theta } } }
\newcommand{\derivee}{ \left( \frac{\partial}{\partial \theta_i } \corr \right)}
\begin{document}

\maketitle

\begin{abstract}
The Gibbs reference posterior distribution provides an objective full-Bayesian solution to the problem of prediction of a stationary Gaussian process with Matérn anisotropic kernel. A full-Bayesian approach is possible, because the posterior distribution is expressed as the invariant distribution of a uniformly ergodic Markovian kernel for which we give an explicit expression. In this paper, we show that it is appropriate for the Universal Kriging framework, that is when an unknown function is added to the stationary Gaussian process. We give sufficient conditions for the existence and propriety of the Gibbs reference posterior that apply to a wide variety of practical cases and illustrate the method with several examples. Finally, simulations of Gaussian processes suggest that the Gibbs reference posterior has good frequentist properties in terms of coverage of prediction intervals.
\end{abstract}

{\bf Keywords.} Gaussian process, Universal Kriging, Reference prior, Gibbs sampling, posterior propriety.

\section{Introduction}

Gaussian Stochastic Processes (GaSP) offer a convenient way of expressing the uncertainty about the value of some real-valued quantity on a given spatial domain $\mathcal{D}$  \citep{Ste99} when said quantity is only observed on a finite set of points in $\mathcal{D}$. This is why Gaussian Process Regression is used as a supervised learning method \citep[chapter 2]{Ras06}, although it originally appeared in the geostatistical literature \citep{Mat60}. In this paper, we follow the geostatistical naming convention for this model: Kriging. 

In Simple Kriging, the Gaussian Process is assumed to have zero mean and be stationary, so its distribution can be characterized by a positive variance parameter $\sigma^2$ and by an autocorrelation function $K$. The Universal Kriging framework adds another parameter : a mean function $f$. If $f$ is known, then subtracting it from the process returns us to the Simple Kriging framework. Allowing for an unknown mean function provides greater flexibility in the modeling by enabling some degree of non-stationarity \citep[section 2.3.2]{SWN}.\medskip

In practice, the mean function $f$ is assumed to belong to a $p$-dimensional ($p \in \N$) vector space $\mathcal{F}_p$, which is specified by means of a basis $(f_1,...,f_p)$.
Being a linear combination of $f_1,...,f_p$, the mean function $f$ is then encoded by the vector of linear coefficients $\bs{\beta} = (\beta_1,...,\beta_p) \trans$ : $f = \beta_1 f_1 + ... + \beta_p f_p$.

Therefore, what separates the Universal Kriging framework from its Simple counterpart is the addition of the $p$-dimensional parameter $\bs{\beta}$. \medskip

Isotropic autocorrelation kernels are usually characterized through a scale parameter on the vector space spanned by $\mathcal{D}$. If $K_\theta: <\mathcal{D}> \mapsto [-1,1]$ is one such kernel, then $\forall \bs{x} \in <\mathcal{D}>$ $K_\theta(\bs{x}) = K_1( \bs{x} / \theta)$. $\theta$ is called the correlation length of the kernel.

However, the assumption that the correlation structure be isotropic is very strong, and is rarely appropriate in the context of computer experiments, where each point in the spatial domain $\mathcal{D}$ represents a set of possibly heterogeneous parameters. In such settings, anisotropic autocorrelations are used, and these require one correlation length $\theta_i$ for every dimension of $\mathcal{D}$. Let $\bs{\theta}$ denote the vector of the correlation lengths of every dimension.

So, assuming the autocorrelation function to be characterized by a vector of correlation lengths $\bs{\theta}$, we are faced with the inference problem of estimating $(\bs{\beta},\sigma^2,\bs{\theta})$. Unfortunately, even in the Simple Kriging framework where the mean function is wholly known, estimating $(\sigma^2,\bs{\theta})$ may be difficult \citep{KOH01}. Indeed, when few observation points are available -- and this is often the case in the context of emulation of Computer experiments -- the likelihood function may be quite flat \citep{LS05}. This is why, instead of hazarding a guess at the ``true'' value of the parameters, it seems reasonable to represent the uncertainty through a posterior distribution. \medskip

In \citet{Mur18}, an objective posterior distribution on $(\sigma^2,\bs{\theta})$ is proposed in the context of Simple Kriging. In this paper we address the more general framework of Universal Kriging in order to obtain a distribution on $(\bs{\beta},\sigma^2,\bs{\theta})$.
The developments in both articles are based on Bernardo's reference prior theory.
The idea to use this theory in the context of Kriging first appeared in \citet{BDOS01}, and then was successively extended by \citet{paulo05}, \citet{KP12},
\citet{RSH12},
\citet{RSS13} 
and \citet{Gu16}.

 To use it we first need to order the parameters \citep{Ber05}. Because our main goal is to maximize the predictive capacity of the model, we are unable to outright say which parameter we care about most. However, a few common sense observations help: first, in order to profit from the work done in the Simple Kriging case, we separate $\bs{\beta}$, which refers to the mean function, from $(\sigma^2,\bs{\theta})$, which yields the covariance structure. Within the latter, $\bs{\theta}$ should have the priority over $\sigma^2$, because while $\sigma^2$ can very easily be accurately estimated once $\bs{\theta}$ is known, the reverse is not true. The same consideration will make us prioritize $(\sigma^2,\bs{\theta})$ over $\bs{\beta}$, because while knowing $\bs{\beta}$ reduces the problem to the Simple Kriging case, knowing $(\sigma^2,\bs{\theta})$ reduces it to a much simpler regression problem. \medskip

In Section \ref{Sec:prior_gibbs_univ} we derive the 
reference posterior distribution on $(\bs{\beta},\sigma^2)$ and the corresponding predictive distribution at unobserved points, both conditional to the observed data and the correlation parameter $\bs{\theta}$.

In Section \ref{Sec:ref_prior_dim1}, we derive analytical formulas for the reference prior on $(\bs{\beta},\sigma^2,\bs{\theta})$ in the case where $\bs{\theta}$ is a one-dimensional parameter. The main difficulty is that because we use exact marginalization, we have to deal with improper likelihoods. Our approach to tackle this problem is, we believe, simpler than \citep{BDOS01}'s, but we show that both solutions amount to restricting the amount of available observation data.

In Section \ref{Sec:Gibbs_posterior}, we prove the main result of the paper: in the context of a Matérn anisotropic correlation kernel \citep{Mat86,HS93} -- see Appendix \ref{App:noyaux_Matérn} for precise definitions -- under a few conditions, the Gibbs reference posterior on a multidimensional $\bs{\theta}$ exists. Combined with the ``partial'' reference posterior distribution on $(\bs{\beta},\sigma^2)$ conditional to $\bs{\theta}$, it provides a proper objective posterior distribution on all parameters $(\bs{\beta},\sigma^2,\bs{\theta})$ given the observed data. 
It is significant that this proper objective posterior distribution is well defined for Matérn anisotropic correlation kernels, because this class of correlation kernels has remarkable properties (see  \citet{Ste99} or \citet[chapter 2]{Bac13}). Notably, it allows the user to specify the smoothness of the realizations of the Gaussian Process.

In Section \ref{Sec:comp_predictions}, we evaluate the predictive performance of the Universal Kriging model with the Gibbs reference posterior distribution both in the context of a well-specified model and when emulating deterministic functions. We compare the full-Bayesian approach relying on the Gibbs reference posterior with plug-in approaches, where the parameters are assumed to be equal to either the Maximum Likelihood Estimator (MLE) or the Maximum A Posteriori (MAP) estimator.

\section{Analytical treatment of the location-scale parameters \texorpdfstring{$\bs{\beta}$}{Lg} and \texorpdfstring{$\sigma^2$}{Lg}} \label{Sec:prior_gibbs_univ}

Suppose our design set contains $n$ observation points. $n$ must be greater than $p$, otherwise the model is not identifiable. Let $\bs{H}$ be the $n \times p$ matrix whose columns contain the values of the $p$ basis functions at the $n$ observation points. Let us assume that the rank of $\bs{H}$ is $p$, because if it were not, the model would also not be identifiable.

Let $\bs{y}$ be the vector of the $n$ observations. Then $\bs{y}$ is a Gaussian vector and its distribution is 

\begin{equation} \label{Eq:loi_kri_univ}
\bs{y} | \bs{\beta}, \sigma^2, \bs{\theta} \sim \mathcal{N}(\bs{H \beta}, \sigma^2 \corr ),
\end{equation}

where $\corr$ is a correlation matrix that only depends on the design set and on the vector of correlation lengths $\bs{\theta}$. \medskip

In terms of likelihood, we have

\begin{equation} \label{Eq:vraisemblance}
L( \bs{y} \; | \;  \bs{\beta}, \sigma^2 , \bs{ \theta } ) = 
\left( \frac{ 1 }{ 2 \pi \sigma^2 } \right) ^ { \frac{n}{2} } | \bs{ \Sigma }_{ \bs{ \theta } } | ^ {- \frac{1}{2} } \exp \left\{ - \frac{ 1 }{ 2 \sigma^2 } \left(\bs{y} - \bs{H \beta} \right) \trans \bs{ \Sigma }_{ \bs{ \theta } }^{-1} \left( \bs{y} - \bs{H \beta} \right) \right\} \; .
\end{equation}

The aim of this section is to get the parameters $\bs{\beta}$ and $\sigma^2$ out of the way in order to focus on the more interesting parameter $\bs{\theta}$. For now, assume that $\bs{\theta}$ is known, which is to say that the correlation function is completely known. \medskip

\subsection{Reference prior and integrated likelihood when \texorpdfstring{$\bs{\theta}$}{Lg} is known.} \label{Subsec:prior_gibbs_univ_calcul}

Clearly, $\bs{\beta}$ is a location parameter and $\sigma:= \sqrt{\sigma^2}$ is a scale parameter for this model. Therefore, the joint reference prior is $\pi(\bs{\beta},\sigma^2 | \bs{\theta}) \propto 1 / \sigma^2$ regardless of the order of the parameters $(\bs{\beta},\sigma^2)$. \medskip

We now derive the posterior distributions $\pi(\bs{\beta} | \bs{y}, \sigma^2, \bs{\theta})$ and $\pi(\sigma^2| \bs{y}, \bs{\theta})$ as well as the integrated likelihoods $L^0(\bs{y} | \sigma^2, \bs{\theta}) := \int L(\bs{y} | \bs{\beta}, \sigma^2, \bs{\theta}) d \bs{\beta} $ and $L^1(\bs{y} | \bs{\theta}) := \iint L(\bs{y} | \bs{\beta}, \sigma^2, \bs{\theta}) / \sigma^2 d \bs{\beta} d \sigma^2$. \medskip

Gaussian theory makes it convenient to split $\bs{y}$ into two components : one that belongs to the subspace of $\R^n$ spanned by $\bs{H}$, and one that is orthogonal to the subspace spanned by $\bs{H}$. In order not to have to deal with degenerate Gaussian vectors, we define an $n \times p$ matrix $\bs{P}$ with full rank which spans the same subspace as $\bs{H}$ (Actually, for the time being, we may as well set $\bs{P} = \bs{H}$.) and an $n \times (n-p)$ matrix $\bs{W}$ with full rank which spans its orthogonal space. Thus $\bs{W} \trans \bs{H} = \bs{W} \trans \bs{P} = \bs{0}_{n-p,p}$ and  \medskip

\begin{align} 
\label{Eq:yOrthogonalH}\bs{W} \trans \bs{y} | \sigma^2, \bs{\theta} \sim \mathcal{N}(& \bs{0}_{n-p}, \sigma^2 \bs{W} \trans \corr \bs{W} ) \; ;\\
\label{Eq:ySelonH_sachant_beta} \bs{P} \trans \bs{y} | \bs{\beta}, \sigma^2, \bs{\theta}, \bs{W} \trans \bs{y}  \sim
 \mathcal{N}(& \bs{P} \trans \bs{H \beta} + \bs{P} \trans \corr \bs{W} \left( \bs{W} \trans \corr \bs{W} \right)^{-1} \bs{W} \trans \bs{y}, \\
   &\sigma^2 \bs{P} \trans \corr \bs{P} - \sigma^2 \bs{P} \trans \corr \bs{W} \left( \bs{W} \trans \corr \bs{W} \right)^{-1} \bs{W} \trans \corr \bs{P}). \nonumber
\end{align}

$\bs{\beta}$ having flat prior density, $\bs{P} \trans \bs{y} - \bs{P} \trans \bs{H \beta}$ has the same distribution whether $\bs{\beta}$, $\sigma^2$ and $\bs{\theta}$ or whether $\bs{P} \trans \bs{y} $, $\sigma^2$ and $\bs{\theta}$ are known.
Therefore, the posterior distribution of $\bs{P} \trans \bs{H \beta}$ if $\sigma^2$ and $\bs{\theta}$ are known is : 

\begin{equation} 
\begin{split}
\bs{P} \trans \bs{H \beta} | \sigma^2, \bs{\theta}, \bs{W} \trans \bs{y}, \bs{P} \trans \bs{y}  \sim
 \mathcal{N}(& \bs{P} \trans \bs{y} - \bs{P} \trans \corr \bs{W} \left( \bs{W} \trans \corr \bs{W} \right)^{-1} \bs{W} \trans \bs{y}, \\
  & \sigma^2 \bs{P} \trans \corr \bs{P} - \sigma^2 \bs{P} \trans \corr \bs{W} \left( \bs{W} \trans \corr \bs{W} \right)^{-1} \bs{W} \trans \corr \bs{P}) .
\end{split}  
\end{equation}

From there, we get the posterior distribution of $\bs{\beta}$ if $\sigma^2$ and $\bs{\theta}$ are known :

\begin{equation} \label{Eq:beta_sachant_ySelonH}
\begin{split}
\bs{\beta} &| \sigma^2, \bs{\theta}, \bs{y}  \sim 
 \mathcal{N}( (\bs{P} \trans \bs{H})^{-1} \bs{P} \trans \bs{y} - (\bs{P} \trans \bs{H})^{-1} \bs{P} \trans \corr \bs{W} \left( \bs{W} \trans \corr \bs{W} \right)^{-1} \bs{W} \trans \bs{y}, \\
  & \sigma^2 (\bs{P} \trans \bs{H})^{-1} \bs{P} \trans \corr \bs{P} (\bs{H} \trans \bs{P})^{-1} - \sigma^2 (\bs{P} \trans \bs{H})^{-1} \bs{P} \trans \corr \bs{W} \left( \bs{W} \trans \corr \bs{W} \right)^{-1} \bs{W} \trans \corr \bs{P} (\bs{H} \trans \bs{P})^{-1}) 
\end{split}  
\end{equation}

Moreover, (\ref{Eq:ySelonH_sachant_beta}) implies that the integrated likelihood of $\bs{P} \trans \bs{y}$, i.e. its likelihood averaged over the Lebesgue measure (the prior distribution on $\bs{\beta}$), is $|\bs{P} \trans \bs{H}|^{-1}$, where $| \cdot |$ denotes the absolute value of the determinant.

\begin{equation} \label{Eq:Py_ignorant_beta}
\begin{split}
\bs{P} \trans \bs{y} | \sigma^2, \bs{\theta}, \bs{W} \trans \bs{y}  \sim
\mathrm{Improper} \; \mathrm{``uniform"} \;  \mathrm{distribution} \; \mathrm{on} \; \R^p .
\end{split}  
\end{equation}

This means that if $\bs{\beta}$ is unknown, then $\bs{P} \trans \bs{y}$ can yield no information about $\sigma^2$ and $\bs{\theta}$. When $\bs{\beta}$ is unknown, all information about $\sigma^2$ and $\bs{\theta}$ is carried by $\bs{W} \trans \bs{y}$, because as is shown by (\ref{Eq:yOrthogonalH}), the predictive distribution on $\bs{W} \trans \bs{y}$ knowing $\sigma^2$ and $\bs{\theta}$ does not depend on $\bs{\beta}$. \medskip

A straightforward calculation yields that the posterior distribution of $\sigma^2$ is Inverse-Gamma :

\begin{equation} 
\begin{split}
\sigma^2 | \bs{\theta},  \bs{y}  \sim
\mathcal{IG} (\mathrm{shape} = (n-p)/2, \mathrm{rate} = \bs{y} \trans \bs{W} \left( \bs{W} \trans \corr \bs{W} \right)^{-1} \bs{W} \trans \bs{y} / 2).
\end{split}  
\end{equation}

The posterior distribution on $\sigma^2$ (knowing $\bs{\theta}$) does not take into account $\bs{P} \trans \bs{y}$, because all information contained in $\bs{P} \trans \bs{y}$ is given in the posterior distribution of $\bs{\beta}$ conditional to $\sigma^2$ and $\bs{\theta}$.

We conclude this subsection with the formulas for the likelihoods with the parameters $\bs{\beta}$ and $\sigma^2$ successively integrated out.

\begin{align}
L^0(\bs{y} | \sigma^2, \bs{\theta}) = \int L(\bs{y} | \bs{\beta}, \sigma^2, \bs{\theta}) d \bs{\beta} 
&= \left( \frac{ 1 }{ 2 \pi \sigma^2 } \right) ^ { \frac{n-p}{2} } | \bs{W} \trans \bs{ \Sigma }_{ \bs{ \theta } } \bs{W} | ^ {- \frac{1}{2} } \exp \left\{ - \frac{ 1 }{ 2 \sigma^2 } \bs{y} \trans \bs{W}  \left( \bs{W} \trans \bs{ \Sigma }_{ \bs{ \theta } } \bs{W} \right)^{-1} \bs{W} \trans \bs{y} \right\} \; ;\\
L^1(\bs{y} | \bs{\theta}) = \int L^0(\bs{y} | \sigma^2, \bs{\theta}) / \sigma^2 d \sigma^2
 &= \left( \frac{2 \pi^{n-p}}{\Gamma\left(\frac{n-p}{2}\right)} \right)^{-1}  
 | \bs{W} \trans \bs{ \Sigma }_{ \bs{ \theta } } \bs{W} | ^ {- \frac{1}{2} }
  \left( \bs{y} \trans \bs{W} \left( \bs{W} \trans \bs{ \Sigma }_{ \bs{ \theta } } \bs{W} \right)^{-1} \bs{W} \trans \bs{y} \right)^{-\frac{n-p}{2} }. \label{Eq:vraisemblance_integree}
\end{align}

\subsection{Posterior predictive distribution when \texorpdfstring{$\bs{\theta}$}{Lg} is known.}

\newcommand{\HNouveauNouveau}{\bs{H}_{0,0}}
\newcommand{\HNouveauAncien}{\bs{H}_{0,\cdot}}
\newcommand{\HAncienNouveau}{\bs{H}_{\cdot,0}}
\newcommand{\corrNouveauNouveau}{\bs{\Sigma}_{\bs{\theta},0,0}}
\newcommand{\corrNouveauAncien}{\bs{\Sigma}_{\bs{\theta},0,\cdot}}
\newcommand{\corrAncienNouveau}{\bs{\Sigma}_{\bs{\theta},\cdot,0}}

\newcommand{\Emarg}{\bs{E}_{0}}
\newcommand{\EmargW}{\bs{E}_{\bs{\theta},0}^{\bs{W}}}
\newcommand{\Vmarg}{\bs{S}_{\bs{\theta},0,0}}
\newcommand{\VmargW}{\bs{S}_{\bs{\theta},0,0}^{\bs{W}}}
\newcommand{\corrmargAncienNouveau}{\bs{S}_{\bs{\theta},\cdot,0}}
\newcommand{\corrmargNouveauAncien}{\bs{S}_{\bs{\theta},0,\cdot}}
\newcommand{\corrmargWNouveauAncien}{\bs{S}_{\bs{\theta},0,\bs{W}}}
\newcommand{\corrmargWAncienNouveau}{\bs{S}_{\bs{\theta},\bs{W},0}}

Following \cite{SWN} (Theorem 4.1.2., case (4)),
 we derive conditionally to $\bs{\theta}$ the posterior predictive distribution of the values 
 taken by the process at unobserved points.

In order to simplify notations in this subsection, all the distributions we consider are, until further notice, conditional to $\sigma^2$ and $\bs{\theta}$ even with no explicit mention.
Equation (\ref{Eq:loi_kri_univ}) can be usefully restated in the following way :

\begin{equation}
\left.
\begin{pmatrix}
\bs{P} \trans \bs{y} - \bs{P} \trans \bs{H} \bs{\beta}  \\ \bs{W} \trans \bs{y}
\end{pmatrix}
\right| \bs{P} \trans \bs{H} \bs{\beta}
\sim
\mathcal{N} \left( \bs{0}_n, \sigma^2 
\begin{pmatrix}
\bs{P} \trans  \\ \bs{W} \trans
\end{pmatrix}
\corr
\begin{pmatrix}
\bs{P}  & \bs{W}
\end{pmatrix}
\right).
\end{equation}

Because the prior distribution on $\bs{\beta}$ is flat,
$
\begin{pmatrix}
\bs{P} \trans \bs{y} - \bs{P} \trans \bs{H} \bs{\beta}  \\ \bs{W} \trans \bs{y}
\end{pmatrix}
$
and its opposite have the same distribution when conditional respectively to $\bs{P} \trans \bs{H} \bs{\beta}$ and $\bs{P} \trans \bs{y}$.

\begin{equation} \label{eq:Ancien}
\left.
\begin{pmatrix}
\bs{P} \trans \bs{H} \bs{\beta} - \bs{P} \trans \bs{y} \\ - \bs{W} \trans \bs{y}
\end{pmatrix}
\right| \bs{P} \trans \bs{y}
\sim
\mathcal{N} \left( \bs{0}_n, \sigma^2 
\begin{pmatrix}
\bs{P} \trans  \\ \bs{W} \trans
\end{pmatrix}
\corr
\begin{pmatrix}
\bs{P}  & \bs{W}
\end{pmatrix}
\right)
\end{equation}

Let $\bs{y}_0$ be the values of the Gaussian Process at the $n_0$ unobserved points. We denote $\HNouveauNouveau$ the $n_0 \times p$ matrix whose columns contain the values of the $p$  basis functions at the unobserved points, $\corrNouveauNouveau$ the $n_0 \times n_0$ correlation matrix of $\bs{y}_0$, $\corrNouveauAncien$ the $n_0 \times n$ correlation matrix between $\bs{y}_0$ and $\bs{y}$ and $\corrAncienNouveau$ its transpose. It is also convenient to define the $n_0 \times n$ matrix $\HNouveauAncien = \HNouveauNouveau \left( \bs{P} \trans \bs{H} \right)^{-1}$ and its transpose $\HAncienNouveau$. With these notations, the distribution  of $\bs{y}_0$ when $\bs{y}$ and $\bs{\beta}$ are known is 

\begin{equation} \label{eq:NouveauSachantAncien}
\bs{y}_0 | \bs{\beta}, \bs{y} \sim \mathcal{N} \left( \HNouveauNouveau \bs{\beta} + \corrNouveauAncien \corr^{-1} (\bs{y} - \bs{H \beta}) , \corrNouveauNouveau - \corrNouveauAncien \corr^{-1} \corrAncienNouveau \right)
\end{equation}

Now, the distribution of $\bs{y}_0$ when $\bs{y}$ and $\bs{\beta}$ are known and the distribution of
$
\begin{pmatrix}
\bs{P} \trans \bs{H} \bs{\beta} - \bs{P} \trans \bs{y} \\ - \bs{W} \trans \bs{y}
\end{pmatrix}
$
when $\bs{P} \trans \bs{y}$ is known jointly define some probability distribution on the vector $(\bs{y}_0, \bs{P} \trans \bs{H \beta} - \bs{P} \trans \bs{y}, - \bs{W} \trans \bs{y} ) \trans $.

This distribution is given in the following proposition. In order to give it a concise expression, it is convenient to require that $\bs{PP} \trans + \bs{WW} \trans = \bs{I}_n$, which simply means that the columns of $\bs{P}$ and $\bs{W}$ form an orthonormal basis of $<\bs{H}>$ and its orthogonal space respectively.

\begin{prop} \label{Prop:grand_vect_gauss}
Assume that $\bs{PP} \trans + \bs{WW} \trans = \bs{I}_n$. Then the probability distribution on the vector of $\R^{n_0+n}$ $(\bs{y}_0, \bs{P} \trans \bs{H \beta} - \bs{P} \trans \bs{y}, - \bs{W} \trans \bs{y} ) \trans $
conditional to $\bs{P} \trans \bs{y}$
is the following multivariate normal distribution :

\begin{equation}
\mathcal{N} \left(
\begin{pmatrix} \Emarg \\ \bs{0}_n \end{pmatrix}
,
\sigma^2
\begin{pmatrix}
\Vmarg  &  \corrmargNouveauAncien  \\ \corrmargAncienNouveau  &
\begin{pmatrix}
\bs{P} \trans  \\ \bs{W} \trans
\end{pmatrix}
\corr
\begin{pmatrix}
\bs{P}  & \bs{W}
\end{pmatrix}
\end{pmatrix}
\right).
\end{equation}

We use the following notations :

\begin{align}
\Emarg :&= \HNouveauAncien \bs{P} \trans \bs{y} \nonumber \\ 
\Vmarg 
:&=
\corrNouveauNouveau +
\HNouveauAncien \bs{P} \trans \corr \bs{P} \HAncienNouveau
- \HNouveauAncien \bs{P} \trans \corrAncienNouveau - \corrNouveauAncien \bs{P} \HAncienNouveau
\nonumber \\
\corrmargNouveauAncien  
\begin{pmatrix}
\bs{P} \trans  \\ \bs{W} \trans
\end{pmatrix}
:&=
\HNouveauAncien \bs{P} \trans \corr - \corrNouveauAncien
\nonumber \\
\corrmargAncienNouveau :&= \corrmargNouveauAncien \trans
\nonumber 
\end{align}
\end{prop}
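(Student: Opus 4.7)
The plan is to chain the two conditional distributions already derived. Set $\bs{u} := \bs{P}\trans\bs{H}\bs{\beta} - \bs{P}\trans\bs{y}$ and $\bs{v} := -\bs{W}\trans\bs{y}$. Equation (\ref{eq:Ancien}) supplies the Gaussian law of $(\bs{u},\bs{v})$ given $\bs{P}\trans\bs{y}$, and (\ref{eq:NouveauSachantAncien}) supplies the Gaussian law of $\bs{y}_0$ given $(\bs{\beta},\bs{y})$. The assumption $\bs{PP}\trans + \bs{WW}\trans = \bs{I}_n$ makes $(\bs{P},\bs{W})$ orthogonal, so the triple $(\bs{P}\trans\bs{y},\bs{u},\bs{v})$ uniquely recovers $\bs{y} = \bs{PP}\trans\bs{y} - \bs{W}\bs{v}$ and the regression mean $\bs{H\beta} = \bs{PP}\trans\bs{H\beta} = \bs{P}(\bs{u} + \bs{P}\trans\bs{y})$ (the first equality uses that $\bs{H\beta}$ lies in the column span of $\bs{P}$). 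Hence the conditional law of $\bs{y}_0$ given $(\bs{P}\trans\bs{y},\bs{u},\bs{v})$ is just (\ref{eq:NouveauSachantAncien}) with $(\bs{\beta},\bs{y})$ replaced by these expressions.

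Substituting gives $\bs{y} - \bs{H\beta} = -\bs{Pu} - \bs{Wv}$ and $\HNouveauNouveau \bs{\beta} = \HNouveauAncien(\bs{u}+\bs{P}\trans\bs{y})$, so conditional on $\bs{P}\trans\bs{y}$ one obtains the affine representation
\begin{equation*}
\bs{y}_0 = \HNouveauAncien \bs{P}\trans\bs{y} + (\HNouveauAncien - \corrNouveauAncien\corr^{-1}\bs{P})\bs{u} - \corrNouveauAncien\corr^{-1}\bs{W}\bs{v} + \bs{\epsilon},
\end{equation*}
where $\bs{\epsilon} \sim \mathcal{N}(\bs{0},\sigma^2(\corrNouveauNouveau - \corrNouveauAncien\corr^{-1}\corrAncienNouveau))$ is independent of $(\bs{u},\bs{v})$. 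Joint Gaussianity of $(\bs{y}_0,\bs{u},\bs{v})$ conditional on $\bs{P}\trans\bs{y}$ and the mean $(\Emarg,\bs{0},\bs{0})\trans$ are then immediate, since $(\bs{u},\bs{v})$ are centered.

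What remains is to identify the covariance blocks. The lower-right block is immediate from (\ref{eq:Ancien}). For the cross-covariance, introduce $\bs{M} := \HNouveauAncien\bs{P}\trans - \corrNouveauAncien\corr^{-1}$; then using $\bs{P}\trans\bs{P} = \bs{I}_p$ and $\bs{P}\trans\bs{W} = \bs{0}$, the two coefficient matrices in the affine representation are $\bs{MP}$ and $\bs{MW}$, and the cross terms collapse via $\bs{PP}\trans + \bs{WW}\trans = \bs{I}_n$ to give $\sigma^2 \bs{M}(\bs{P},\bs{W}) = \sigma^2 (\HNouveauAncien\bs{P}\trans\corr - \corrNouveauAncien)(\bs{P},\bs{W})$, which matches $\sigma^2\corrmargNouveauAncien$ after comparing with the implicit definition (right-multiply the defining identity by $(\bs{P},\bs{W})$). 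For the upper-left block, expanding $\V(\bs{y}_0 \mid \bs{P}\trans\bs{y}) = \sigma^2 \bs{M}\corr\bs{M}\trans + \sigma^2(\corrNouveauNouveau - \corrNouveauAncien\corr^{-1}\corrAncienNouveau)$ produces four cross-products; the $\corrNouveauAncien\corr^{-1}\corrAncienNouveau$ contributions cancel, leaving exactly $\Vmarg$. This last cancellation is the only real bookkeeping obstacle; everything else is linearity and conditional Gaussianity.
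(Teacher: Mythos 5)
Your proof is correct, and it runs in the opposite direction from the paper's. The paper asserts joint Gaussianity of $(\bs{y}_0, \bs{P} \trans \bs{H \beta} - \bs{P} \trans \bs{y}, - \bs{W} \trans \bs{y} ) \trans$ from the linearity of the conditional mean in (\ref{eq:NouveauSachantAncien}), and then \emph{verifies} the stated covariance blocks backwards: it conditions the claimed joint law on $(\bs{W}\trans\bs{y},\bs{P}\trans\bs{y},\bs{P}\trans\bs{H\beta})$ via the standard Gaussian conditioning formula and checks that the resulting mean and variance reproduce those of (\ref{eq:NouveauSachantAncien}). You instead reconstruct $(\bs{y},\bs{H\beta})$ from $(\bs{P}\trans\bs{y},\bs{u},\bs{v})$ using $\bs{PP}\trans+\bs{WW}\trans=\bs{I}_n$, obtain the explicit affine representation $\bs{y}_0 = \Emarg + \bs{M}(\bs{P}\,\bs{u}+\bs{W}\bs{v})+\bs{\epsilon}$ with $\bs{M}=\HNouveauAncien\bs{P}\trans-\corrNouveauAncien\corr^{-1}$ and $\bs{\epsilon}$ independent noise, and \emph{derive} the blocks by forward computation; the identity $\bigl(\begin{smallmatrix}\bs{P}\trans\\ \bs{W}\trans\end{smallmatrix}\bigr)(\bs{P}\;\bs{W})=\bs{I}_n$ then collapses the cross-covariance to $\sigma^2\bs{M}\corr(\bs{P}\;\bs{W})=\sigma^2\corrmargNouveauAncien$ and the top-left block to $\Vmarg$ after the cancellation of the $\corrNouveauAncien\corr^{-1}\corrAncienNouveau$ terms, exactly as you say. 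Both arguments rest on the same two ingredients ((\ref{eq:Ancien}), (\ref{eq:NouveauSachantAncien}) and the orthonormality assumption), but yours is constructive — it produces the formulas for $\Vmarg$ and $\corrmargNouveauAncien$ rather than checking that given formulas are consistent — while the paper's verification is shorter once the answer is known, since it only needs the reverse conditional and the marginal of $(\bs{u},\bs{v})$ to pin down the joint law.
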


\begin{proof}

First, notice that the mean vector of the Normal distribution given by Equation \ref{eq:NouveauSachantAncien} can be rewritten as

\begin{equation}
(\HNouveauAncien - \corrNouveauAncien \corr^{-1} \bs{P} )\bs{P} \trans \bs{H \beta} + \corrNouveauAncien \corr^{-1} \bs{WW} \trans \bs{y} + \corrNouveauAncien \corr^{-1} \bs{PP} \trans \bs{y},
\end{equation}

which is a linear mapping of the vector $(\bs{P} \trans \bs{H \beta}, \bs{W} \trans \bs{y}, \bs{P} \trans \bs{y}) \trans $. Now, Equation \ref{eq:Ancien} tells us that conditional to $\bs{P} \trans \bs{y}$, $(\bs{P} \trans \bs{H \beta}, \bs{W} \trans \bs{y}, \bs{P} \trans \bs{y}) \trans $ is a (degenerate) Gaussian vector, so Gaussian theory implies that conditional to $\bs{P} \trans \bs{y}$, $(\bs{y}_0, \bs{P} \trans \bs{H \beta}, \bs{W} \trans \bs{y}, \bs{P} \trans \bs{y}) \trans $ is a Gaussian vector and therefore $(\bs{y}_0, \bs{P} \trans \bs{H \beta} - \bs{P} \trans \bs{y}, - \bs{W} \trans \bs{y}) \trans $ is one as well. So 
all that remains to be shown is that its mean and covariance are those given by Proposition \ref{Prop:grand_vect_gauss}.

To do this, we compute
$\bs{E}_{\bs{\theta}}^{\bs{y},\bs{\beta}}$ and $\sigma^2 \bs{S}_{\bs{\theta}}^{\bs{y},\bs{\beta}}$, the conditional mean and variance of $\bs{y}_0$ given $\bs{W} \trans \bs{y}$, $\bs{P} \trans \bs{y}$ and $\bs{P} \trans \bs{H} \bs{\beta}$ and check that they fit the parameters of (\ref{eq:NouveauSachantAncien}).

\begin{align}
\bs{E}_{\bs{\theta}}^{\bs{y},\bs{\beta}} &=
\Emarg
+ \corrmargNouveauAncien  
\begin{pmatrix}
\bs{P} \trans  \\ \bs{W} \trans
\end{pmatrix}
\corr^{-1}
\begin{pmatrix}
\bs{P}  & \bs{W}
\end{pmatrix}
\begin{pmatrix}
\bs{P} \trans \bs{H} \bs{\beta} - \bs{P} \trans \bs{y} \\ - \bs{W} \trans \bs{y}
\end{pmatrix}
=
\HNouveauNouveau \bs{\beta} + \corrNouveauAncien \corr^{-1} (\bs{y} - \bs{H} \bs{\beta}) \\
\bs{S}_{\bs{\theta}}^{\bs{y},\bs{\beta}} &=
\Vmarg 
- \corrmargAncienNouveau \trans 
\begin{pmatrix}
\bs{P} \trans  \\ \bs{W} \trans
\end{pmatrix}
\corr^{-1}
\begin{pmatrix}
\bs{P}  & \bs{W}
\end{pmatrix}
\corrmargAncienNouveau 
=
\corrNouveauNouveau - \corrNouveauAncien \corr^{-1} \corrAncienNouveau 
\end{align}

\end{proof}

From this point onwards, distributions are no longer implicitly conditional to $\sigma^2$ and $\bs{\theta}$.

\begin{cor}
Assume that $\bs{PP} \trans + \bs{WW} \trans = \bs{I}_n$. The predictive distribution when $\bs{\beta}$ is unknown -- i.e. the distribution of $\bs{y}_0$ conditional to $\bs{y}$, $\sigma^2$ and $\bs{\theta}$ -- is Normal. With the notations of Proposition \ref{Prop:grand_vect_gauss}, it has mean vector 
$\Emarg - \corrmargAncienNouveau \bs{W} \left( \bs{W} \trans \corr \bs{W} \right)^{-1} \bs{W} \trans \bs{y}$
and covariance matrix 
$$ \sigma^2 \left\{ \Vmarg - \corrmargAncienNouveau \bs{W} \left( \bs{W} \trans \corr \bs{W} \right)^{-1} \bs{W} \trans \corrmargNouveauAncien \right\}.$$
\end{cor}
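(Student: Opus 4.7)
The plan is to deduce the corollary from Proposition \ref{Prop:grand_vect_gauss} by a marginalization followed by a Gaussian conditioning. Conditioning on $\bs{y}$, $\sigma^2$, $\bs{\theta}$ is the same as conditioning on $\sigma^2$, $\bs{\theta}$, $\bs{P}\trans\bs{y}$ and $\bs{W}\trans\bs{y}$, so I can start from the joint distribution of the triple $(\bs{y}_0, \bs{P}\trans\bs{H}\bs{\beta}-\bs{P}\trans\bs{y}, -\bs{W}\trans\bs{y})\trans$ conditional to $\bs{P}\trans\bs{y}$ supplied by Proposition \ref{Prop:grand_vect_gauss}, and then turn $\bs{W}\trans\bs{y}$ into a conditioning variable while integrating $\bs{\beta}$ out.

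First I would marginalize out the middle coordinate. Since $\bs{\beta}$ carries a flat (improper) prior and only appears in $\bs{P}\trans\bs{H}\bs{\beta}-\bs{P}\trans\bs{y}$, dropping this component is exactly the integration over $\bs{\beta}$; for a jointly Gaussian vector this amounts to keeping the relevant sub-blocks. The resulting distribution of $(\bs{y}_0,-\bs{W}\trans\bs{y})\trans$ given $\bs{P}\trans\bs{y}$ is then multivariate Normal with mean $(\Emarg,\bs{0}_{n-p})\trans$ and a $2\times 2$ block covariance whose upper-left block is $\sigma^2\Vmarg$, whose lower-right block is $\sigma^2\bs{W}\trans\corr\bs{W}$ (the $(\bs{W}\trans,\bs{W}\trans)$-entry of the big covariance in Proposition \ref{Prop:grand_vect_gauss}), and whose off-diagonal block is the ``$\bs{W}$-part'' of $\corrmargNouveauAncien$, i.e.\ the block $\bs{B}$ defined by $\bs{A}\bs{P}\trans+\bs{B}\bs{W}\trans = \HNouveauAncien\bs{P}\trans\corr-\corrNouveauAncien$ (obtained by post-multiplying the defining identity of $\corrmargNouveauAncien$ by $\bs{W}$ and using $\bs{PP}\trans+\bs{WW}\trans=\bs{I}_n$).

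Second, I would apply the usual Schur-complement formula to condition on the second block being equal to the observed value $-\bs{W}\trans\bs{y}$. This yields a Gaussian law whose mean is $\Emarg + \bs{B}\,(\bs{W}\trans\corr\bs{W})^{-1}(-\bs{W}\trans\bs{y}-\bs{0}) = \Emarg - \bs{B}(\bs{W}\trans\corr\bs{W})^{-1}\bs{W}\trans\bs{y}$, and whose covariance is $\sigma^2\{\Vmarg - \bs{B}(\bs{W}\trans\corr\bs{W})^{-1}\bs{B}\trans\}$. Rewriting $\bs{B}$ (respectively $\bs{B}\trans$) in the notation of the proposition as $\corrmargNouveauAncien\bs{W}$ (respectively $\bs{W}\trans\corrmargAncienNouveau$) recovers the formulas stated in the corollary.

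I do not expect any real obstacle: everything is routine Gaussian manipulation. The only care required is bookkeeping — correctly identifying, from the compact definition of $\corrmargNouveauAncien$ in Proposition \ref{Prop:grand_vect_gauss}, the sub-block corresponding to the coordinate $-\bs{W}\trans\bs{y}$. The orthogonality condition $\bs{P}\trans\bs{W}=\bs{0}$, $\bs{P}\trans\bs{P}=\bs{I}_p$, $\bs{W}\trans\bs{W}=\bs{I}_{n-p}$ (consequence of $\bs{PP}\trans+\bs{WW}\trans=\bs{I}_n$) makes this extraction immediate.
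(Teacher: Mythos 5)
Your argument is correct and is exactly the route the paper intends: the corollary is stated without a separate proof precisely because it follows from Proposition \ref{Prop:grand_vect_gauss} by dropping the $\bs{P}\trans\bs{H\beta}-\bs{P}\trans\bs{y}$ block and applying the standard Gaussian (Schur-complement) conditioning on $-\bs{W}\trans\bs{y}$, with the orthonormality $\bs{P}\trans\bs{W}=\bs{0}$, $\bs{W}\trans\bs{W}=\bs{I}_{n-p}$ isolating the correct sub-block of the cross-covariance. Your identification of that sub-block as $\left(\HNouveauAncien\bs{P}\trans\corr-\corrNouveauAncien\right)\bs{W}$ matches what the corollary's expression $\corrmargAncienNouveau\bs{W}$ must denote for the dimensions to be consistent, so the proof is complete.
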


\begin{cor}
Assume that $\bs{PP} \trans + \bs{WW} \trans = \bs{I}_n$. The predictive distribution when both $\bs{\beta}$ and $\sigma^2$ are unknown -- i.e. the distribution of $\bs{y}_0$ conditional to $\bs{y}$ and $\bs{\theta}$ -- is multivariate Student with $n-p$ degrees of freedom. With the notations of Proposition \ref{Prop:grand_vect_gauss}, it has location vector 
$\Emarg - \corrmargAncienNouveau \bs{W} \left( \bs{W} \trans \corr \bs{W} \right)^{-1} \bs{W} \trans \bs{y}$
and scale matrix 
$$ \frac{\bs{y} \trans \bs{W} \left( \bs{W} \trans \corr \bs{W} \right)^{-1} \bs{W} \trans \bs{y}}{n-p}\left\{ \Vmarg - \corrmargAncienNouveau \bs{W} \left( \bs{W} \trans \corr \bs{W} \right)^{-1} \bs{W} \trans \corrmargNouveauAncien \right\}.$$
\end{cor}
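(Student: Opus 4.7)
The plan is to obtain this Corollary by marginalizing the previous Corollary's Gaussian predictive distribution over the posterior on $\sigma^2$ already derived in Subsection \ref{Subsec:prior_gibbs_univ_calcul}. Writing
\begin{equation*}
\bs{m}_{\bs{\theta}} := \Emarg - \corrmargAncienNouveau \bs{W} \left( \bs{W} \trans \corr \bs{W} \right)^{-1} \bs{W} \trans \bs{y},
\qquad
\bs{S}_{\bs{\theta}} := \Vmarg - \corrmargAncienNouveau \bs{W} \left( \bs{W} \trans \corr \bs{W} \right)^{-1} \bs{W} \trans \corrmargNouveauAncien,
\end{equation*}
the previous Corollary gives $\bs{y}_0 \mid \bs{y}, \sigma^2, \bs{\theta} \sim \mathcal{N}(\bs{m}_{\bs{\theta}}, \sigma^2 \bs{S}_{\bs{\theta}})$, with $\bs{m}_{\bs{\theta}}$ and $\bs{S}_{\bs{\theta}}$ independent of $\sigma^2$. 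Combined with
\begin{equation*}
\sigma^2 \mid \bs{y}, \bs{\theta} \sim \mathcal{IG}\!\left( \tfrac{n-p}{2},\; \tfrac{1}{2}\bs{y} \trans \bs{W} \left( \bs{W} \trans \corr \bs{W} \right)^{-1} \bs{W} \trans \bs{y} \right),
\end{equation*}
the distribution of $\bs{y}_0 \mid \bs{y}, \bs{\theta}$ is obtained by the compound integral $\int p(\bs{y}_0 \mid \bs{y}, \sigma^2, \bs{\theta}) \, p(\sigma^2 \mid \bs{y}, \bs{\theta}) \, d\sigma^2$.

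The key step is the standard Normal--Inverse-Gamma mixture identity: if $\bs{X} \mid \sigma^2 \sim \mathcal{N}(\bs{m}, \sigma^2 \bs{S})$ with $\sigma^2 \sim \mathcal{IG}(a, b)$, then the marginal distribution of $\bs{X}$ is multivariate Student with $2a$ degrees of freedom, location $\bs{m}$ and scale matrix $(b/a)\bs{S}$. This can be verified directly by recognising the integrand, after collecting the exponent in $-\frac{1}{2\sigma^2}[(\bs{X}-\bs{m})\trans\bs{S}^{-1}(\bs{X}-\bs{m}) + 2b]$ and the power $(\sigma^2)^{-(a + d/2 + 1)}$ (with $d = n_0$ here), as the kernel of an inverse-gamma density in $\sigma^2$, whose normalising constant gives exactly the Student density after cancellation of the $\Gamma$ and $\pi$ factors.

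Applying this with $a = (n-p)/2$, $b = \tfrac{1}{2}\bs{y} \trans \bs{W} \left( \bs{W} \trans \corr \bs{W} \right)^{-1} \bs{W} \trans \bs{y}$, $\bs{m} = \bs{m}_{\bs{\theta}}$ and $\bs{S} = \bs{S}_{\bs{\theta}}$ yields a multivariate Student with $n-p$ degrees of freedom, location $\bs{m}_{\bs{\theta}}$, and scale matrix
\begin{equation*}
\frac{b}{a}\bs{S}_{\bs{\theta}} \;=\; \frac{\bs{y} \trans \bs{W} \left( \bs{W} \trans \corr \bs{W} \right)^{-1} \bs{W} \trans \bs{y}}{n-p}\,\bs{S}_{\bs{\theta}},
\end{equation*}
which is exactly the stated expression. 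No real obstacle is expected: the two ingredients (the Gaussian predictive from the previous Corollary and the Inverse-Gamma posterior on $\sigma^2$) are already available, and the mixture identity is classical. The one point deserving explicit mention is that $\bs{m}_{\bs{\theta}}$ and $\bs{S}_{\bs{\theta}}$ depend on the data only through $\bs{y}$ and on $\sigma^2$ not at all, which is what permits pulling them out of the integral and invoking the identity.
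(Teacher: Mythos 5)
Your proposal is correct and follows the same route the paper implicitly takes (the corollary is stated without a written proof, relying on the preceding Gaussian predictive and the Inverse-Gamma posterior on $\sigma^2$, in the spirit of the cited Theorem 4.1.2 of \cite{SWN}): mixing $\mathcal{N}(\bs{m}_{\bs{\theta}}, \sigma^2 \bs{S}_{\bs{\theta}})$ over $\mathcal{IG}\left(\frac{n-p}{2}, \frac{1}{2}\bs{y} \trans \bs{W} \left( \bs{W} \trans \corr \bs{W} \right)^{-1} \bs{W} \trans \bs{y}\right)$ yields exactly the stated Student distribution with $n-p$ degrees of freedom and scale $(b/a)\bs{S}_{\bs{\theta}}$. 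No gaps.
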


\section{Reference prior on a one-dimensional \texorpdfstring{$\bs{\theta}$}{Lg} } \label{Sec:ref_prior_dim1}

In this section, $\bs{\theta}$ is assumed to be a scalar parameter, which we emphasize by denoting it $\theta$.

\renewcommand{\corr}{ \bs{ \Sigma }_{ \theta } }
\renewcommand{\derivee}{ \left( \frac{\partial}{\partial \theta } \corr \right)}

Because of (\ref{Eq:yOrthogonalH}) and (\ref{Eq:Py_ignorant_beta}), it is fairly obvious that
the reference prior on $\theta$ is the same as in the Simple Kriging case \citep{Mur18}, but with $\bs{y}$ being replaced by $\bs{W} \trans \bs{y}$ and $\bs{ \Sigma }_{ \theta }$ by $\bs{W} \trans \bs{ \Sigma }_{ \theta }\bs{W}$. Naturally, because $\bs{W} \trans \bs{y} \in \R^{n-p}$ instead of $\R^n$, $n$ should also be replaced by $n-p$ :

\begin{prop}
The reference prior on $\theta$ is:
\begin{equation} \label{Prior1D}
\pi( \theta ) \propto \sqrt{ \Tr \left[ 
\left\{ \bs{W} \trans \frac{ \partial }{\partial \theta } ( \bs{ \Sigma }_{ \theta } )  \bs{W}
 \left( \bs{W} \trans \bs{ \Sigma }_{ \theta } \bs{W} \right)^{-1} \right\}^2 \right] - \frac{1}{ n-p } \left[ \Tr \left\{ \bs{W} \trans \frac{ \partial }{\partial \theta } ( \bs{ \Sigma }_{ \theta } ) \bs{W} \left( \bs{W} \trans \bs{ \Sigma }_{ \theta } \bs{W} \right)^{-1} \right\}\right]^2 }.
\end{equation}

\end{prop}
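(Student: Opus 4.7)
The plan is to reduce the Universal Kriging situation to a Simple Kriging one on the transformed data $\bs{W}\trans \bs{y}$ and then quote the known one-dimensional reference prior formula from \citet{Mur18}.

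First I would invoke Equations (\ref{Eq:yOrthogonalH}) and (\ref{Eq:Py_ignorant_beta}): under the flat prior on $\bs{\beta}$, the component $\bs{P}\trans \bs{y}$ is marginally improper-uniform and therefore contains no information about $(\sigma^2,\theta)$. All information about these two parameters is carried by the marginal distribution
\[
\bs{W}\trans \bs{y}\,|\,\sigma^2,\theta \sim \mathcal{N}\bigl(\bs{0}_{n-p},\,\sigma^2\,\bs{W}\trans \corr \bs{W}\bigr),
\]
a centered Gaussian vector of dimension $n-p$ with covariance matrix $\sigma^2\,\bs{W}\trans \corr \bs{W}$. Equivalently, the integrated likelihood (\ref{Eq:vraisemblance_integree}) is exactly the Simple Kriging likelihood one would obtain from an $(n-p)$-dimensional zero-mean Gaussian sample with covariance kernel $\bs{W}\trans \corr \bs{W}$.

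Next I would compute, on this reduced model, the $2\times 2$ Fisher information matrix for the parameters $(\theta,\sigma^2)$. For a centered Gaussian of dimension $m$ with covariance $\sigma^2 \bs{K}_\theta$, standard calculations give
\begin{align*}
I_{\theta\theta} &= \tfrac{1}{2}\Tr\bigl[(\bs{K}_\theta^{-1}\partial_\theta \bs{K}_\theta)^2\bigr],\\
I_{\sigma^2\sigma^2} &= \tfrac{m}{2\sigma^4},\\
I_{\theta,\sigma^2} &= \tfrac{1}{2\sigma^2}\Tr\bigl[\bs{K}_\theta^{-1}\partial_\theta \bs{K}_\theta\bigr].
\end{align*}
With the ordering $(\theta,\sigma^2)$ (justified in the Introduction), Bernardo's reference prior algorithm yields $\pi(\sigma^2\mid\theta)\propto 1/\sigma^2$, consistent with the flat scale prior already used, and
\[
\pi(\theta)\propto \sqrt{\,I_{\theta\theta}-I_{\theta,\sigma^2}^2/I_{\sigma^2\sigma^2}\,}\;\propto\; \sqrt{\Tr\bigl[(\bs{K}_\theta^{-1}\partial_\theta \bs{K}_\theta)^2\bigr]-\tfrac{1}{m}\bigl(\Tr[\bs{K}_\theta^{-1}\partial_\theta \bs{K}_\theta]\bigr)^2}.
\]
This is exactly the one-dimensional reference prior obtained in \citet{Mur18}.

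Finally I would substitute $m=n-p$ and $\bs{K}_\theta = \bs{W}\trans \corr \bs{W}$, noting that $\partial_\theta(\bs{W}\trans \corr \bs{W}) = \bs{W}\trans(\partial_\theta \corr)\bs{W}$ since $\bs{W}$ does not depend on $\theta$, to obtain exactly (\ref{Prior1D}). The only subtle point, which I would handle carefully, is to justify rigorously that the marginalization of $\bs{\beta}$ against Lebesgue measure legitimately transfers the reference-prior derivation from the original likelihood to the integrated likelihood. This is ensured here because the transformation $\bs{y}\mapsto(\bs{P}\trans\bs{y},\bs{W}\trans\bs{y})$ is an invertible linear change of variables, the $\bs{P}\trans\bs{y}$-component has improper uniform marginal independent of $(\sigma^2,\theta)$, and the reference prior is invariant under such reductions by a nuisance location parameter equipped with its own reference (flat) prior.
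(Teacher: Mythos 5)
Your proposal is correct and follows essentially the same route as the paper: the paper's own justification is precisely that, by Equations (\ref{Eq:yOrthogonalH}) and (\ref{Eq:Py_ignorant_beta}), all information about $(\sigma^2,\theta)$ sits in the reduced Simple Kriging model $\bs{W}\trans\bs{y}\,|\,\sigma^2,\theta \sim \mathcal{N}(\bs{0}_{n-p},\sigma^2\bs{W}\trans\corr\bs{W})$, so the one-dimensional reference prior of \citet{Mur18} applies with $\bs{\Sigma}_\theta$ replaced by $\bs{W}\trans\bs{\Sigma}_\theta\bs{W}$ and $n$ by $n-p$. Your explicit Fisher-information computation and the remark on the invariance of the reduction merely fill in details the paper leaves as ``fairly obvious.''
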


We now prove that this result is in keeping with the previous work of \citep{BDOS01}:

\begin{prop}
The reference prior on $\theta$ can also be written as:
\begin{equation} \label{Eq:Gibbs_ref_prior_theta_berger}
\pi(\theta) \propto 
\sqrt{\Tr \left[ \left\{ \frac{ \partial }{\partial \theta } ( \bs{ \Sigma }_{ \theta } )  \bs{ \Sigma }_{ \theta }^{-1} \bs{Q}_\theta \right\}^2 \right] 
-
\frac{1}{n-p} \left[ \Tr \left\{\frac{ \partial }{\partial \theta } ( \bs{ \Sigma }_{ \theta } )  \bs{ \Sigma }_{ \theta }^{-1} \bs{Q}_\theta \right\}\right]^2 },
\end{equation}
where $\bs{Q}_\theta := \bs{I}_n - \bs{H} \left( \bs{H} \trans \corr^{-1} \bs{H} \right)^{-1} \bs{H} \trans \corr^{-1}$.
\end{prop}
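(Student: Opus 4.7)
The plan is to show that the two expressions under the square root in (\ref{Prior1D}) and (\ref{Eq:Gibbs_ref_prior_theta_berger}) coincide termwise. This will follow from a single matrix identity, together with the cyclic property of the trace.

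The key identity I would establish is
\begin{equation*}
\bs{W}\left(\bs{W}\trans \corr \bs{W}\right)^{-1}\bs{W}\trans \;=\; \corr^{-1}\bs{Q}_\theta,
\end{equation*}
which rewrites the ``$\bs{W}$-based'' operator as a transformation of the full-size matrix $\corr$. To prove it, I would check that both sides, viewed as linear maps $\R^n \to \R^n$, agree on a convenient basis. Take the columns of $\bs{H}$ and of $\corr \bs{W}$: applied to $\bs{H}$, the left side vanishes because $\bs{W}\trans \bs{H} = \bs{0}$, and the right side vanishes by a direct computation, since $\corr^{-1}\bs{Q}_\theta \bs{H} = \corr^{-1}\bs{H} - \corr^{-1}\bs{H}(\bs{H}\trans\corr^{-1}\bs{H})^{-1}(\bs{H}\trans\corr^{-1}\bs{H}) = \bs{0}$. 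Applied to $\corr \bs{W}$, the left side simplifies to $\bs{W}$ by telescoping, while the right side gives $\bs{W} - \corr^{-1}\bs{H}(\bs{H}\trans\corr^{-1}\bs{H})^{-1}\bs{H}\trans\bs{W} = \bs{W}$, again using $\bs{H}\trans \bs{W} = \bs{0}$. The combined family $[\bs{H},\; \corr\bs{W}]$ has rank $n$: any dependency $\bs{H}\bs{a} + \corr\bs{W}\bs{b} = \bs{0}$ forces $\bs{b}=\bs{0}$ after left-multiplication by $\bs{W}\trans$ (since $\bs{W}\trans\corr\bs{W}$ is invertible), and then $\bs{a}=\bs{0}$ since $\bs{H}$ has full column rank. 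So the identity holds on a basis and therefore everywhere.

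With this identity in hand, set $\bs{A}_\theta := \frac{\partial}{\partial \theta}\corr$ and apply the cyclic property of the trace. For the linear term,
\begin{equation*}
\Tr\!\left[ \bs{W}\trans \bs{A}_\theta \bs{W}\left(\bs{W}\trans\corr\bs{W}\right)^{-1}\right]
= \Tr\!\left[ \bs{A}_\theta\, \bs{W}\left(\bs{W}\trans\corr\bs{W}\right)^{-1}\bs{W}\trans\right]
= \Tr\!\left[ \bs{A}_\theta \corr^{-1}\bs{Q}_\theta\right].
\end{equation*}
For the quadratic term, expanding the square and inserting $\bs{W}\bs{M}^{-1}\bs{W}\trans = \corr^{-1}\bs{Q}_\theta$ (with $\bs{M}=\bs{W}\trans\corr\bs{W}$) twice after a cyclic rotation gives
\begin{equation*}
\Tr\!\left[\left\{\bs{W}\trans \bs{A}_\theta \bs{W}\bs{M}^{-1}\right\}^2\right]
= \Tr\!\left[\left\{\bs{A}_\theta\corr^{-1}\bs{Q}_\theta\right\}^2\right].
\end{equation*}
Substituting these two equalities into (\ref{Prior1D}) yields (\ref{Eq:Gibbs_ref_prior_theta_berger}).

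The main (and really only) obstacle is establishing the matrix identity cleanly; there are no convergence or regularity issues to worry about, since every matrix involved is invertible by assumption ($\bs{H}$ has rank $p$ and $\corr$ is a positive definite correlation matrix). Once the identity is in place, the result reduces to two trace manipulations. As a side benefit, the identity makes transparent the fact that the reference prior depends on $\bs{W}$ only through the subspace it spans — equivalently, only through the projector $\bs{Q}_\theta$ — so that (\ref{Eq:Gibbs_ref_prior_theta_berger}) is manifestly invariant under the choice of complementary basis.
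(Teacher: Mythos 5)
Your proof is correct, but it takes a genuinely different route from the paper's. You prove the key identity $\bs{W}\left(\bs{W}\trans \bs{\Sigma}_{\theta} \bs{W}\right)^{-1}\bs{W}\trans = \bs{\Sigma}_{\theta}^{-1}\bs{Q}_\theta$ directly, by checking both linear maps on the basis $[\bs{H},\ \bs{\Sigma}_{\theta}\bs{W}]$ of $\R^n$, and then reduce the proposition to two applications of the cyclic property of the trace; every step of that argument is sound (the rank count $p+(n-p)=n$ and the independence argument are both correct). The paper instead spends most of its proof re-deriving the expression (\ref{Eq:Gibbs_ref_prior_theta_berger}) from scratch following \citet{BDOS01}: it differentiates the log of the integrated likelihood $L^1(\bs{y}\,|\,\theta)$, writes the resulting score as a quadratic form evaluated at a point uniformly distributed on an $(n-p)$-dimensional sphere, and invokes Lemma \ref{Lem:variance_matrix} on the variance of such quadratic forms; only at the very end does it note the same matrix identity you prove, and it obtains that identity indirectly, by observing that the two computations of $\int L(\bs{y}\,|\,\bs{\beta},\sigma^2,\theta)\,d\bs{\beta}$ must produce the same exponent for all $\bs{y}$. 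Your approach buys brevity and a self-contained linear-algebra argument for exactly the stated equivalence; the paper's longer detour buys an independent confirmation that the \citet{BDOS01} marginalization scheme arrives at the same reference prior, which is really the point it wants to make in connecting to that earlier work. Your closing observation --- that the identity makes the prior manifestly dependent on $\bs{W}$ only through the projector $\bs{Q}_\theta$ --- is a nice bonus that the paper's presentation leaves implicit.
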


\begin{proof}
\cite{BDOS01} describe an alternative method to the one described in Subsection \ref{Subsec:prior_gibbs_univ_calcul} for the computation of the reference prior. Denoting $L(\bs{y} | \bs{\beta}, \sigma^2, \theta)$ the likelihood of the model, i.e. the density of the probability distribution of $\bs{y}$ when $\bs{\beta}$, $\sigma^2$ and $\theta$ are known, they compute $L^1(\bs{y} | \theta) = \iint L(\bs{y} | \bs{\beta}, \sigma^2, \theta) / \sigma^2 d \bs{\beta} d \sigma^2 \propto | \corr |^{-1/2} | \bs{H} \trans \corr^{-1} \bs{H} |^{-1/2} \exp \left[ - \left(2 \sigma^2 \right)^{-1} \bs{y} \trans \bs{Q}_\theta \trans \corr^{-1} \bs{Q}_\theta \bs{y} \right]$.
$\bs{Q}_\theta$ is the orthogonal projection on the orthogonal of the subspace of $\R^n$ spanned by $\bs{H}$, where orthogonality is defined by the scalar product $(\bs{a},\bs{b}) \mapsto \bs{a} \trans \bs{ \Sigma }_{ \theta }^{-1} \bs{b}$. This implies that $\bs{Q}_\theta \trans \bs{ \Sigma }_{ \theta }^{-1} \bs{Q}_\theta = \bs{ \Sigma }_{ \theta }^{-1} \bs{Q}_\theta$.  \medskip

First, we must compute 
the variance of the derivative of $L^1(\bs{y} | \bs{\theta})$ with respect to $\theta$.

\begin{equation}
\partial_{\theta} L^1(\bs{y} | \theta)
= - \frac{n}{2} 
\frac{\bs{y} \trans \partial_{\theta} \left\{\bs{Q}_\theta \trans \corr^{-1} \bs{Q}_\theta \right\} \bs{y}}
{\bs{y} \trans \bs{Q}_\theta \trans \corr^{-1} \bs{Q}_\theta \bs{y}} + C_{\theta} ,
\end{equation}

where $C_{\theta}$ is some additive constant.

\begin{equation}
\begin{split}
\partial_{\theta} \left\{\bs{Q}_\theta \trans \corr^{-1} \bs{Q}_\theta \right\}
&= \partial_{\theta} \left\{ \corr^{-1} - \corr^{-1} \bs{H} \left( \bs{H} \trans \corr^{-1} \bs{H} \right)^{-1} \bs{H} \trans \corr^{-1} \right\} \\
&= - \corr^{-1} \left(\partial_{\theta} \corr\right) \corr^{-1}
+ \corr^{-1} \left(\partial_{\theta} \corr\right) \corr^{-1} \bs{H} \left(\bs{H} \trans \corr^{-1} \bs{H}\right)^{-1} \bs{H} \trans \corr^{-1} \\
& \qquad - \corr^{-1} \bs{H} \left( \bs{H} \trans \corr^{-1} \bs{H}\right)^{-1} \bs{H} \trans \corr^{-1} \left(\partial_{\theta} \corr\right) \corr^{-1} \bs{H} \left( \bs{H} \trans \corr^{-1} \bs{H}\right)^{-1} \bs{H} \trans \corr^{-1} \\
& \qquad + \corr^{-1} \bs{H} \left( \bs{H} \trans \corr^{-1} \bs{H}\right)^{-1} \bs{H} \trans \corr^{-1} \left(\partial_{\theta} \corr\right) \corr^{-1} \\
&= - \corr^{-1} \left(\partial_{\theta} \corr\right) \corr^{-1} \bs{Q}_\theta
+ \corr^{-1} \bs{H} \left( \bs{H} \trans \corr^{-1} \bs{H}\right)^{-1} \bs{H} \trans \corr^{-1} \left(\partial_{\theta} \corr\right) \corr^{-1} \bs{Q}_\theta \\
&= - \corr^{-1} \bs{Q}_\theta \left(\partial_{\theta} \corr\right) \corr^{-1} \bs{Q}_\theta
= - \bs{Q}_\theta \trans \corr^{-1} \left(\partial_{\theta} \corr\right) \corr^{-1} \bs{Q}_\theta.
\end{split}
\end{equation}

The last step in the above computation is due to the fact that as $\corr^{-1} \bs{Q}_\theta = \bs{Q}_\theta \trans \corr^{-1} \bs{Q}_\theta$, it is symmetric. Finally, we obtain

\begin{equation}
\partial_{\theta} L^1(\bs{y} | \bs{\theta})
= \frac{n}{2} 
\frac{\bs{y} \trans \bs{Q}_\theta \trans \corr^{-1} \left(\partial_{\theta} \corr\right) \corr^{-1} \bs{Q}_\theta \bs{y}}
{\bs{y} \trans \bs{Q}_\theta \trans \corr^{-1} \bs{Q}_\theta \bs{y}} + C_{\theta} .
\end{equation}

Define a matrix $ \sqrt{ \bs{ \Sigma }_{  \theta } } $ such that $ \bs{ \Sigma }_{ \theta } = \sqrt{ \bs{ \Sigma }_{  \theta } } \sqrt{ \bs{ \Sigma }_{  \theta } } \trans $. 

Let $f_{\theta} : \R^n \rightarrow \R^n$ be defined by $f_{\theta}(\bs{y}) = \sqrt{ \bs{ \Sigma }_{  \theta } }^{-1}  \bs{Q}_\theta \bs{y} / \sqrt{\bs{y} \trans \bs{Q}_\theta \trans \corr^{-1} \bs{Q}_\theta \bs{y}} $.

\begin{equation}
\partial_{\theta} L^1(\bs{y} | \bs{\theta})
= \frac{n}{2} 
f_{\theta}(\bs{y}) \trans
\sqrt{ \bs{ \Sigma }_{  \theta } }^{-1}  \left(\partial_{\theta} \corr\right) \left(\sqrt{ \bs{ \Sigma }_{  \theta } }^{-1} \right) \trans
f_{\theta}(\bs{y})
 + C_{\theta} .
\end{equation}

$f_{\theta}$ pushes the probability distribution $\mathcal{N} (\bs{H \beta} , \sigma^2 \corr) $ onto the uniform distribution
 on the intersection of the unit sphere $S^{n-1}$ with the subspace of $\R^n$ spanned by $\sqrt{ \bs{ \Sigma }_{  \theta } }^{-1} \bs{Q}_\theta$, which is a sphere of dimension $n-p$.
 
\begin{lem} \label{Lem:variance_matrix}
If $\bs{U}$ is a random variable with uniform probability distribution on $S^{n-1}$, then for every real symmetric matrix $\bs{M}$, the variance of $\bs{U}  \trans \bs{M} \bs{U}$ is proportional to 
$
\Tr \left[ \bs{M}^2 \right] - n^{-1} \Tr \left[ \bs{M} \right]^2.
$
\end{lem}

The proof can be found in Appendix \ref{Sec:prior_gibbs_univ:App}.

\begin{cor} \label{Cor:quadratic_form}
Let $q$ be a quadratic form on $\R^n$ and let $q_1,...,q_n$ be its eigenvalues. Then, if $\bs{U}$ is a random variable with uniform probability distribution on $S^{n-1}$, the variance of the random variable $q(\bs{U})$ is proportional to $\sum_{i=1}^n q_i^2 - n^{-1} \left( \sum_{i=1}^n q_i \right)^2$.
\end{cor}

Let us define the inner product $< \bs{a} | \bs{b} >_{\theta} = \bs{a} \trans \sqrt{ \bs{ \Sigma }_{  \theta } }^{-1}  \left(\partial_{\theta} \corr\right) \left(\sqrt{ \bs{ \Sigma }_{  \theta } }^{-1} \right) \trans \bs{b}$.

Because we have
$
 \bs{Q}_\theta \trans \corr^{-1} \left(\partial_{\theta} \corr\right) \corr^{-1} \bs{Q}_\theta
 =
  \bs{Q}_\theta \trans \corr^{-1} \bs{Q}_\theta \left(\partial_{\theta} \corr\right) \bs{Q}_\theta \trans \corr^{-1} \bs{Q}_\theta
  $,
if $\bs{a}$ and $\bs{b}$ belong to the vector space spanned by $\sqrt{ \bs{ \Sigma }_{  \theta } }^{-1} \bs{Q}_\theta$, then   
  $< \bs{a} | \bs{b} >_{\theta} = \bs{a} \trans \sqrt{ \bs{ \Sigma }_{  \theta } }^{-1} 
  \bs{Q}_\theta \left(\partial_{\theta} \corr\right)
  \bs{Q}_\theta \trans
   \left(\sqrt{ \bs{ \Sigma }_{  \theta } }^{-1} \right) \trans \bs{b}$. 
Moreover, if either $\bs{a}$ or $\bs{b}$ is orthogonal to the vector space spanned by $\sqrt{ \bs{ \Sigma }_{  \theta } }^{-1} \bs{Q}_\theta$ in the sense of the usual scalar product, then 
$\bs{a} \trans \sqrt{ \bs{ \Sigma }_{  \theta } }^{-1} 
  \bs{Q}_\theta \left(\partial_{\theta} \corr\right)
  \bs{Q}_\theta \trans
   \left(\sqrt{ \bs{ \Sigma }_{  \theta } }^{-1} \right) \trans \bs{b} = 0$.

Therefore, the matrix $\bs{M}_\theta := \sqrt{ \bs{ \Sigma }_{  \theta } }^{-1} 
  \bs{Q}_\theta \left(\partial_{\theta} \corr\right)
  \bs{Q}_\theta \trans
   \left(\sqrt{ \bs{ \Sigma }_{  \theta } }^{-1} \right) \trans$
represents the restriction of the inner product $< \cdot | \cdot >_{\theta}$ to the vectorial subspace of $\R^n$ spanned by $\sqrt{ \bs{ \Sigma }_{  \theta } }^{-1} \bs{Q}_\theta$.

Combining this observation with Corollary \ref{Cor:quadratic_form} yields the following Corollary.
  
\begin{cor}
Define $\bs{M}_\theta := \sqrt{ \bs{ \Sigma }_{  \theta } }^{-1} 
  \bs{Q}_\theta \left(\partial_{\theta} \corr\right)
  \bs{Q}_\theta \trans
   \left(\sqrt{ \bs{ \Sigma }_{  \theta } }^{-1} \right) \trans$.
If $\bs{U}$ is a random variable with uniform probability distribution on the intersection of the unit sphere $S^{n-1}$ with the vectorial subspace of $\R^n$ of dimension $n-p$ spanned by $\sqrt{ \bs{ \Sigma }_{  \theta } }^{-1} \bs{Q}_\theta$,
then the variance of $\bs{U} \trans 
\sqrt{ \bs{ \Sigma }_{  \theta } }^{-1}  \left(\partial_{\theta} \corr\right) \left(\sqrt{ \bs{ \Sigma }_{  \theta } }^{-1} \right) \trans
\bs{U}$ is proportional to 
$
\Tr \left[ \bs{M}_\theta^2 \right] - (n-p)^{-1} \Tr \left[ \bs{M}_\theta \right]^2.
$

\end{cor}

From there, Equation (\ref{Eq:Gibbs_ref_prior_theta_berger}) follows trivially. \medskip

Both expressions of the Gibbs reference prior (\ref{Prior1D}) and (\ref{Eq:Gibbs_ref_prior_theta_berger}) are equal. This can be seen by noticing that the two methods yield two different expressions of

$$\int L(\bs{y} | \bs{\beta}, \sigma^2, \bs{\theta}) d \bs{\beta} \propto \exp \left[ - \left(2 \sigma^2 \right)^{-1} \bs{y} \trans \bs{W} \left(\bs{W} \trans \corr \bs{W} \right)^{-1} \bs{W} \trans \bs{y} \right] \propto \exp \left[ - \left(2 \sigma^2 \right)^{-1} \bs{y} \trans \bs{Q}_\theta \trans \corr^{-1} \bs{Q}_\theta \bs{y} \right].$$ If this is to hold for all $\bs{y} \in \R^n$, then we have the equality 

\begin{equation}
\bs{W} \left(\bs{W} \trans \corr \bs{W} \right)^{-1} \bs{W} \trans
=\bs{Q}_\theta \trans \corr^{-1} \bs{Q}_\theta
= \corr^{-1} \bs{Q}_\theta,
\end{equation}

which coupled with the properties of the trace implies that (\ref{Prior1D}) and (\ref{Eq:Gibbs_ref_prior_theta_berger}) are the same.

\end{proof}

\renewcommand{\corr}{ \bs{ \Sigma }_{ \bs{\theta} } }
\renewcommand{\derivee}{  \partial_{\theta_i} \corr }

\section{The Gibbs reference posterior on a multi-dimensional \texorpdfstring{$\bs{\theta}$}{Lg} } \label{Sec:Gibbs_posterior}

\subsection{Definition}

In the case of multidimensional $\bs{\theta}$, reference prior theory gives a choice between 1) considering $\bs{\theta}$ as a single parameter or 2) defining an ordering on the scalar parameters $\theta_1,...,\theta_r$. Both possibilities are unsatisfactory, albeit in different ways. Concerning 1), Jeffreys' prior is unsuited to dealing with multidimensional parameters \citep{RCR09} and besides, the posterior may be improper. Concerning 2), further integration of the likelihood (\ref{Eq:vraisemblance_integree}) would be analytically intractable, even if it were possible to define a non-arbitrary ordering of the coordinates of $\bs{\theta}$. \medskip

We propose a quasi-posterior distribution based on the reference posterior of models where only one coordinate of $\bs{\theta}$ is unknown. 
For any integer $i \in [\!|1,r]\!]$, we collectively denote $\bs{\theta}_{-i}$ all coordinates of $\bs{\theta}$ except the $i$-th: $\bs{\theta}_{-i} = (\theta_j)_{j \in [\![1,r]\!] \setminus \{i\}}$.

Consider now $\pi_i(\theta_i | \bs{\theta}_{-i})$ the reference prior distribution on $\theta_i$ conditional to $\bs{\theta}_{-i}$ and the associated reference posterior distribution
$
\pi_i(\theta_i | \bs{y}, \bs{\theta}_{-i}) \propto  L^1(\bs{y} | \bs{\theta}) \pi_i(\theta_i | \bs{\theta}_{-i}),
$.

The conditional reference prior $\pi_i(\theta_i | \bs{\theta}_{-i})$ is given by:
\begin{equation} \label{Eq:Gibbs_ref_prior_theta}
\pi_i(\theta_i | \bs{\theta}_{-i})  \propto 
\sqrt{ \Tr \left[ 
\left\{ \bs{W} \trans \derivee  \bs{W}
 \left( \bs{W} \trans \corr \bs{W} \right)^{-1} \right\}^2 \right] - \frac{1}{ n-p } \left[ \Tr \left\{ \bs{W} \trans \derivee \bs{W} \left( \bs{W} \trans \corr \bs{W} \right)^{-1} \right\}\right]^2 }.
\end{equation}

Now consider the sequence of conditional posterior distributions $(\pi_i(\theta_i | \bs{y}, \bs{\theta}_{-i}))_{i \in [\![1,r]\!]}$. These conditional distributions are incompatible in the sense that there exists no joint probability distribution $\pi(\bs{\theta}|\bs{y})$ which agrees with all of them. We may however define the Gibbs reference posterior as a compromise between the conditionals in this sequence. In \citet{Mur18} we provided theoretical foundation for what such a compromise could be. In the end, we showed it to be the stationary probability distribution of a Markovian kernel $P_{\bs{y}}: (0,+\infty)^r \times \mathcal{B}\left((0,+\infty)^r \right)$, where $\mathcal{B}\left((0,+\infty)^r \right)$ denotes the Borel algebra on $\left((0,+\infty)^r \right)$. $P_{\bs{y}}$ is defined by the following expression, where $\bs{\theta}^{(0)} \in (0,1)^r$ and $\delta_t$ denotes the shifted Dirac measure $\delta(\cdot - t)$:

\begin{align}
P_{\bs{y}}(\bs{\theta}^{(0)}, d \bs{\theta}) &= \frac{1}{r} \sum_{i=1}^r \pi_i(\theta_i | \bs{y}, \bs{\theta}_{-i}^{(0)}) d \theta_i \;  \delta_{\bs{\theta}_{-i}^{(0)}} (d \bs{\theta}_{-i})  .
\end{align}

The goal of this section is to provide sufficient conditions for the existence (and thus, propriety) of this stationary probability distribution $\pi_G(\bs{\theta} | \bs{y})$ and to show that the Markov Chain Monte-Carlo (MCMC) algorithm based on the Markovian kernel $P_{\bs{y}}$ converges to it, that is, $P_{\bs{y}}$ is uniformly ergodic. This means that denoting $P^n_{\bs{y}}$ the Markov kernel produced by $n$ successive applications of $P_{\bs{y}}$ and $\| \cdot \|_{TV}$ the total variation norm,

\begin{equation} \label{Eq:uniforme_ergodicite}
\lim_{n \to \infty} \sup_{\bs{\theta}^{(0)} \in (0,+\infty)^r } \| P_{\bs{y}}^n (\bs{\theta}^{(0)},\cdot) - \pi_G(\cdot | \bs{y}) \|_{TV} = 0.
\end{equation} \medskip

In the following results, when we write that ``$P_{\bs{y}}$ is uniformly ergodic'', we mean that Equation (\ref{Eq:uniforme_ergodicite}) holds. \medskip

\subsection{Existence}

The results in this subsection deal with the following setting:

\begin{itemize}
\item The spatial domain is the unit cube $(0,1)^r$ ($r>0$).
\item The mean function space $\mathcal{F}_p$ has dimension $p \geqslant 0$.
\item The Universal Kriging model uses a Matérn anisotropic geometric or tensorized correlation kernel with smoothness parameter $\nu>0$.
\item Design sets contain $n>0$ points, so we identify $(0,1)^{rn}$ with the set of all design sets in the spatial domain $(0,1)^r$. Let $Q(r,n)$ be the Lebesgue measure on $(0,1)^{rn}$.
\end{itemize}

In the following, we change parametrization for the sake of convenience : define $\bs{\mu}$ such that $\forall i \in [\![1,r]\!]$, $\mu_i = 1 / \theta_i$. The conditionals are invariant to such a change, and therefore 
both the Markovian kernel $P_{\bs{y}}$  and, if it exists, its stationary probability remain the same. Abusing notations, the likelihood $L^1(\bs{y}|\bs{\theta})$ is denoted $L(\bs{y}|\bs{\mu})$ when expressed in the $\bs{\mu}$-parametrization.

\newcommand{\norme}{ \| \bs{\mu} \| }
\newcommand{\manqueI}{\bs{\mu}_{-i}}
\newcommand{\normeI}{ \| \bs{\mu}_{-i} \| }
\newcommand{\normeMu}{ \| \bs{\mu} \| }
\newcommand{\coIrr}{ \bs{ \Sigma }_{ \bs{ \mu } } }
\newcommand{\coI}{ \bs{ \Sigma }_{ \mu_i } }
\newcommand{\co}{ \bs{ \Sigma }_{ \mu } }
\newcommand{\coIrrI}{\bs{\Sigma}_{\bs{\mu}_{-i}}}
\newcommand{\deIrivee}{ \frac{\partial}{\partial \mu_i } \coIrr }
\newcommand{\deI}{ \frac{d}{d \mu_i } \coI }

Define the functions $f_i$ by

\begin{equation} \label{Eq:Gibbs_ref_prior_mu}
f_i(\mu_i | \manqueI ) :=
\sqrt{\Tr \left[ \left( \bs{W} \trans \deIrivee \bs{W} \left( \bs{W} \trans \coIrr \bs{W} \right)^{-1} \right)^2 \right] 
-
\frac{1}{n-p} \Tr \left[\bs{W} \trans \deIrivee \bs{W} \left( \bs{W} \trans \coIrr \bs{W} \right)^{-1} \right]^2 }.
\end{equation}

Then, following Equation (\ref{Eq:Gibbs_ref_prior_theta}), the conditional density $\pi_i$ is in the $\bs{\mu}$-parametrization given by:

\begin{equation}
\pi_i(\mu_i | \manqueI) \propto f_i(\mu_i | \manqueI).
\end{equation}

We need to make some assumptions which are detailed below.

\begin{hyp} \label{Hyp:moyenne_non_zero}
Any vector in the subspace of $\R^n$ spanned by $\bs{H}$ is either null or has strictly more than $2r$ non-null elements when expressed in the canonical base.

\end{hyp}

\begin{rmq}
It is not apparent, but the purpose of Assumption \ref{Hyp:moyenne_non_zero} is to control the behavior of the $f_i(\mu_i | \manqueI)$ ($i \in [\![1,r]\!]$) when $\norme \to \infty$. See the proofs in Appendix \ref{App:Gibbs_posterior} for details.
\end{rmq}

This assumption is not very restrictive, as the two following results show.

\begin{prop}
In Ordinary Kriging -- that is with $p=1$ and $\mathcal{F}_p$ being the space of constant functions -- if $n>2r$, Assumption \ref{Hyp:moyenne_non_zero} is automatically verified.
\end{prop}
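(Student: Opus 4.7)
The proof is essentially immediate from unpacking the definitions, and the plan is to do exactly that.

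First I would observe that in the Ordinary Kriging setup, the single basis function $f_1$ is the constant function equal to $1$. Since $\bs{H}$ is the $n \times p$ matrix of basis-function values at the $n$ design points, with $p = 1$ we have $\bs{H} = \bs{1}_n$, the column vector all of whose entries equal $1$. Consequently, the subspace of $\R^n$ spanned by $\bs{H}$ is the one-dimensional space of constant vectors $\{c \bs{1}_n : c \in \R\}$.

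Next I would split into cases on the scalar $c$. If $c = 0$, the vector is the zero vector, which is null, so Assumption \ref{Hyp:moyenne_non_zero} is satisfied vacuously for this vector. If $c \neq 0$, then \emph{every} entry of $c \bs{1}_n$ equals $c$ and is therefore non-null in the canonical basis; the total number of non-null entries is thus exactly $n$.

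Finally I would invoke the hypothesis $n > 2r$ to conclude that any non-null vector in $\langle \bs{H} \rangle$ has strictly more than $2r$ non-null entries, which is precisely what Assumption \ref{Hyp:moyenne_non_zero} demands. There is no genuine obstacle here; the content of the proposition is simply that the one-dimensional constant-function case is trivial, and the bound $n > 2r$ appearing in the hypothesis is in fact the only numerical requirement that needs to be checked.
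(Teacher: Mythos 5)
Your proof is correct and takes essentially the same route as the paper, which simply notes that $\bs{H}$ is a non-null constant $n\times 1$ matrix so that the assumption is trivially verified; you have merely unpacked that one-line observation into the explicit case split on $c$ and the count of $n$ non-null entries versus the bound $2r$.
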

\begin{proof}
In this setting, $\bs{H}$ is a non-null constant $n \times 1$ matrix, so Assumption \ref{Hyp:moyenne_non_zero} is trivially verified.
\end{proof}

\begin{prop} \label{Prop:krigeage_affine_hyp_moyenne_non_zero}
Assume that the design set is such that any subset with cardinal $r+1$ forms a simplex.
Then in Universal Kriging, if the mean function space $\mathcal{F}_p$ is included within the vector space of polynomials of degree 0 and 1, and if $n>3r$, Assumption  \ref{Hyp:moyenne_non_zero} is automatically verified.
\end{prop}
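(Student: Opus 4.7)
The plan is to analyze what elements of the column span $\langle \bs{H}\rangle$ look like when $\mathcal{F}_p$ consists of affine functions, and then use the simplex hypothesis to bound how many design points can vanish simultaneously on any nonzero affine function. Concretely, every nonzero vector in $\langle \bs{H}\rangle$ has the form $(f(\bs{x}_1),\dots,f(\bs{x}_n))\trans$ where $\bs{x}_1,\dots,\bs{x}_n$ are the $n$ design points and $f$ is a nonzero element of $\mathcal{F}_p$. Since $\mathcal{F}_p$ is contained in the space of polynomials of degree at most $1$, such an $f$ may be written $f(\bs{x}) = a_0 + \bs{a}\trans \bs{x}$ with $(a_0,\bs{a}) \neq (0,\bs{0})$.

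The argument then splits into two easy cases. First, if $\bs{a} = \bs{0}$, then $f = a_0$ is a nonzero constant, so all $n$ entries of the vector are nonzero; since $n > 3r > 2r$, Assumption \ref{Hyp:moyenne_non_zero} holds trivially in this case. Second, if $\bs{a} \neq \bs{0}$, the zero set $V_f = \{\bs{x} \in \R^r : f(\bs{x}) = 0\}$ is an affine hyperplane of dimension $r-1$ in $\R^r$. Here one invokes the simplex assumption: any $r+1$ design points form a simplex, hence are affinely independent, hence cannot all lie in a single affine hyperplane. Consequently $V_f$ contains at most $r$ design points, so at least $n-r$ entries of $(f(\bs{x}_1),\dots,f(\bs{x}_n))\trans$ are nonzero. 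With $n > 3r$, we get $n-r > 2r$, which is exactly the conclusion of Assumption \ref{Hyp:moyenne_non_zero}.

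There is essentially no obstacle in the argument; the only point meriting care is the translation from ``any $r+1$ design points form a simplex'' to ``no affine hyperplane of $\R^r$ contains more than $r$ design points,'' which follows from the standard fact that $r+1$ points are affinely independent if and only if they are not contained in any affine hyperplane of dimension $r-1$. Once that equivalence is stated, the two cases above are immediate and the proposition follows in a short paragraph.
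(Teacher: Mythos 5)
Your proof is correct and follows essentially the same route as the paper: both arguments reduce to the observation that, by the simplex hypothesis, a nonzero polynomial of degree at most $1$ cannot vanish at $r+1$ of the design points, so any nonzero vector of $\langle \bs{H} \rangle$ has at least $n-r > 2r$ nonzero entries. The paper phrases this as a contrapositive (a vector with at most $2r$ nonzero entries has at least $n-2r \geqslant r+1$ zeros, forcing $f^*=0$) while you argue directly with a case split on whether the linear part vanishes, but the content is identical.
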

\begin{proof}
Let $\bs{y}^*$ belong to the subspace of $\R^n$ spanned by $\bs{H}$. Assume that it has $2r$ or fewer non-null elements when expressed in the canonical base. Conversely, it has at least $n-2r$ null elements. If $n>3r$, then this means that there exists a function $f^* \in \mathcal{F}_p$ (the one represented by $\bs{y}^*$) which admits at least $r+1$ zeros on the design set. However, given the premise of Proposition \ref{Prop:krigeage_affine_hyp_moyenne_non_zero}, these $r+1$ points form a simplex, so they span an affine space of dimension $r$. As $f^*$ is a polynomial with $r$ unknowns of degree 0 or 1, this implies that $f^*=0$.
\end{proof}

\begin{rmq}
$Q(r,n)$-almost all design sets fit 
the premise of Proposition \ref{Prop:krigeage_affine_hyp_moyenne_non_zero}.
\end{rmq}

In some cases, Assumption \ref{Hyp:moyenne_non_zero} is sufficient for our purposes. Define $\bs{1}$ as the vector of $\R^n$ with all components in the canonical basis equal to 1.

\begin{prop} \label{Prop:nu<1}
In the setting described above, if $0<\nu<1$ and $n>p+1$, then
for $Q(r,n)$-almost all design sets, if $\bs{1}$ does not belong to the vector space spanned by $\bs{H}$, then
 Assumption \ref{Hyp:moyenne_non_zero} implies that 
there exists a hyperplane $\mathcal{H}$ of $\R^n$ such that $\forall \bs{y} \in \R^n \setminus \mathcal{H}$, 
$P_{\bs{y}}$ is uniformly ergodic.
\end{prop}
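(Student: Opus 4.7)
The plan is to verify the Doeblin minorization that characterizes uniform ergodicity: after one complete sweep of the random-scan Gibbs kernel $P_{\bs{y}}$, produce a constant $\varepsilon>0$, a probability measure $\nu$ on $(0,+\infty)^r$, and show $P_{\bs{y}}^r(\bs{\mu}^{(0)},\cdot) \geqslant \varepsilon \, \nu(\cdot)$ uniformly in $\bs{\mu}^{(0)}$. Because $P_{\bs{y}}$ picks a coordinate uniformly at random and draws from the conditional $\pi_i(\mu_i | \bs{y},\manqueI)\propto L(\bs{y}|\bs{\mu}) f_i(\mu_i|\manqueI)$, the task reduces to producing, for each $i$, a compactly supported minorizing density $g_i$ such that $\pi_i(\mu_i | \bs{y},\manqueI)\geqslant \varepsilon g_i(\mu_i)$ uniformly in $\manqueI$. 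Such a uniform lower bound in turn follows from matching uniform upper bounds on the normalizing constant of $\pi_i$, which is where the asymptotic analysis of $L f_i$ at the two boundaries of $(0,+\infty)$ enters.

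The heart of the proof is therefore the boundary analysis of $L(\bs{y}|\bs{\mu}) f_i(\mu_i|\manqueI)$, which I expect to carry out in an appendix following the approach of \citet{Mur18}. As $\mu_i \to +\infty$ (i.e.\ the correlation length in direction $i$ vanishes), the Matérn matrix $\coIrr$ tends, up to a lower-order Matérn remainder controlled by $\nu<1$, to a matrix that decouples the $i$-th direction; Assumption \ref{Hyp:moyenne_non_zero} is precisely what guarantees that the projection onto $\langle \bs{H}\rangle^\perp$ remains non-degenerate in this limit, so that $\bs{y}\trans \bs{W}(\bs{W}\trans\coIrr\bs{W})^{-1}\bs{W}\trans\bs{y}$ stays bounded below and $Lf_i$ decays fast enough to be integrable near $+\infty$ uniformly in $\manqueI$. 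The regime $\mu_i\to 0^+$ (infinite correlation length) is the more delicate one: $\coIrr$ collapses toward a rank-one matrix proportional to $\bs{1}\bs{1}\trans$, and both the determinant $|\bs{W}\trans\coIrr\bs{W}|$ and the quadratic form in $L^1$ must be expanded to the right order. The hypothesis $\bs{1}\notin \langle\bs{H}\rangle$ ensures $\bs{W}\trans \bs{1}\neq \bs{0}$, which prevents the catastrophic cancellation that would otherwise make the integrated likelihood blow up at the origin; combined with $\nu<1$, which sets the leading order of the small-distance expansion of the Matérn kernel, this gives an integrable upper bound on $Lf_i$ near zero.

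The hyperplane $\mathcal{H}$ arises as the exceptional locus where the quadratic form $\bs{y}\trans\bs{W}(\bs{W}\trans\coIrr\bs{W})^{-1}\bs{W}\trans\bs{y}$ fails to stay bounded below in the $\mu_i\to 0^+$ limit---essentially the kernel of a specific linear functional built from $\bs{1}$ and $\bs{H}$, which is a codimension-one subspace of $\R^n$. Outside $\mathcal{H}$, all of the upper bounds produced in the two boundary regimes are uniform in $\manqueI$ lying in arbitrary compact subsets of $(0,+\infty)^{r-1}$, and also uniform when one of the $\mu_j$ ($j\neq i$) approaches a boundary (the structure of the Matérn tensorization ensures the boundary analysis for $\mu_i$ is not worsened by extreme values of $\manqueI$, a point that I expect to need explicit verification).

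The single main obstacle is this uniform-in-$\manqueI$ control of $Lf_i$ along the two boundaries, specifically the identification of the exact polynomial order in $\mu_i$ for the Matérn traces appearing in (\ref{Eq:Gibbs_ref_prior_mu}) and the cancellations in the $\mu_i\to 0^+$ expansion. Once these uniform bounds are in place, the resulting integrable envelope on the conditionals yields the Doeblin minorization on any compact rectangle $K\subset (0,+\infty)^r$ that has sufficiently large mass under every conditional posterior, and uniform ergodicity of $P_{\bs{y}}$ follows by a standard $r$-step minorization argument identical to the one used in \citet{Mur18}.
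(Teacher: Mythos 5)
Your plan follows essentially the same route as the paper: the paper's own proof of this proposition is a one-line reduction to the Simple Kriging analysis of \citet{Mur18}, resting on exactly the observation you make for the high-correlation regime --- since $\bs{1} \notin \langle \bs{H} \rangle$, the matrix $\bs{W} \trans \bs{1} \bs{1} \trans \bs{W}$ still has rank one, so the leading term of the small-$\normeMu$ expansion of $\bs{W}\trans \coIrr \bs{W}$ for $\nu<1$ is not destroyed by the projection and the Simple Kriging argument (including the origin of the exceptional hyperplane $\mathcal{H}$) carries over verbatim; the low-correlation regime is handled by the general appendix argument under Assumption \ref{Hyp:moyenne_non_zero}. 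One point of understanding to correct: you attribute to Assumption \ref{Hyp:moyenne_non_zero} the job of keeping $\bs{y}\trans\bs{W}(\bs{W}\trans\coIrr\bs{W})^{-1}\bs{W}\trans\bs{y}$ bounded below as $\mu_i \to \infty$, but that is automatic there (the correlation matrix tends to the identity); the assumption's actual role is to control the conditional reference \emph{prior}, namely to guarantee the lower bound in Equation (\ref{Eq:encadrement_densite_prior_cond}) by ensuring that the limiting matrix $\bs{W}\trans \bs{L}_{i,\bs{\alpha}} \bs{W}$ of the normalized derivative of the correlation matrix is not null, so that $f_i$ remains comparable to $h_i$ at infinity. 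This misattribution does not derail the plan, since the boundary analysis you defer to the appendix would surface the correct mechanism.
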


The proof of this Proposition can be found in Appendix \ref{App:Gibbs_posterior}.

Naturally, the above result is somewhat unsatisfactory since most users will want to include non-null constant functions in $\mathcal{F}_p$.

Consider now the following assumption.

\begin{hyp} \label{Hyp:vraisemblance_discriminante}
There exists $\epsilon_{\bs{y}}>0$ such that $L(\bs{y} | \bs{\mu}) = O(\norme^{\epsilon_{\bs{y}}})$ when $\norme \to 0$.
\end{hyp}

\begin{rmq}
Assumption \ref{Hyp:vraisemblance_discriminante} essentially means that the model should find perfect correlation unlikely.
\end{rmq}

The following theorem, which is proved in Appendix \ref{App:Gibbs_posterior}, is our essential tool for dealing with the case where non-null constant functions are included in $\mathcal{F}_p$.

\begin{thm} \label{Thm:compromis_gibbs_existe}
In the setting described above, if $\nu>1$ and $n>p+r+2$, then for $Q(r,n)$-almost all design sets,
 Assumptions \ref{Hyp:moyenne_non_zero} and \ref{Hyp:vraisemblance_discriminante} imply that $P_{\bs{y}}$ is uniformly ergodic.
\end{thm}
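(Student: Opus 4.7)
The plan is to establish the two ingredients that together yield uniform ergodicity of a Gibbs kernel on the unbounded state space $(0,+\infty)^r$: propriety of the candidate stationary distribution $\pi_G(\bs{\mu}|\bs{y})$, and a Doeblin-type minorization valid uniformly in the starting point. Following the framework laid out in \citet{Mur18}, both reduce to a careful analysis of the product $L(\bs{y}|\bs{\mu}) f_i(\mu_i|\manqueI)$ near the two boundaries of coordinate space, so that one can exhibit a compact ``core'' set on which the one-step kernel admits a uniform lower bound while the mass outside that core is controlled.

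The analysis of the regime $\normeMu \to 0$ is the easier half. There $\coIrr$ degenerates toward the rank-one matrix $\bs{1}\bs{1}\trans$, so that $\bs{W}\trans \coIrr \bs{W}$ becomes near-singular and $f_i$ may blow up polynomially in $\mu_i^{-1}$. The assumption $\nu > 1$ gives the Matérn kernel enough smoothness to expand $\coIrr$ and $\deIrivee$ to second order in $\bs{\mu}$, from which one extracts a polynomial bound on $f_i$; Assumption~\ref{Hyp:vraisemblance_discriminante} then supplies a compensating factor $\normeMu^{\epsilon_{\bs{y}}}$ in $L(\bs{y}|\bs{\mu})$, ensuring integrability near the origin.

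The regime $\normeMu \to \infty$ is the main obstacle. Here $\coIrr \to \bs{I}_n$ exponentially fast, so $L(\bs{y}|\bs{\mu})$ tends to a finite positive limit and decay must come entirely from $f_i$ itself. Writing $\coIrr = \bs{I}_n + \bs{E}(\bs{\mu})$ with $\bs{E}$ having exponentially small entries, one expands the two traces in (\ref{Eq:Gibbs_ref_prior_mu}) to the order at which the $\bs{W}$-projection no longer annihilates the leading contribution. The role of Assumption~\ref{Hyp:moyenne_non_zero} is precisely to ensure that for a generic design set no basis vector of the column span of $\bs{H}$ can ``absorb'' these leading terms after projection by $\bs{W}$; without the hypothesis that every non-zero element of $\langle \bs{H} \rangle$ carries more than $2r$ non-null coordinates, the leading correction could lie in $\langle \bs{H} \rangle$ and be killed by $\bs{W}\trans$, leaving $f_i$ with no integrable tail. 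The arithmetic condition $n > p + r + 2$ provides the spare coordinates needed to make this non-cancellation robust, and the exclusion of a $Q(r,n)$-null set of design configurations absorbs the exceptional determinantal degeneracies.

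Combining these two estimates, one selects a rectangle $K = [a,b]^r \subset (0,+\infty)^r$ outside of which the product $L \cdot f_i$ is integrable with uniform tail bounds, and inside of which each conditional density $\pi_i(\mu_i|\bs{y},\manqueI)$ is continuous, bounded, and bounded away from zero on a common subinterval. A single full sweep of the random-scan kernel $P_{\bs{y}}$ then concentrates mass in $K$, and $r$ sweeps produce a minorization of the form $P_{\bs{y}}^{r}(\bs{\mu}^{(0)}, \cdot) \geqslant \delta \, \nu(\cdot)$ holding for every $\bs{\mu}^{(0)} \in (0,+\infty)^r$, with $\nu$ a probability measure supported in $K$. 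This is the classical Doeblin condition and is equivalent to (\ref{Eq:uniforme_ergodicite}); its existence simultaneously forces $\pi_G(\bs{\mu}|\bs{y})$ to be proper. The technical heart of the argument, and where I would expect the bulk of the appendix to be spent, is the expansion at $\normeMu \to \infty$ and the combinatorial use of Assumption~\ref{Hyp:moyenne_non_zero} to certify that the leading term of $f_i$ survives projection by $\bs{W}$.
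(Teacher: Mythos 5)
Your plan reproduces the architecture of the paper's argument: the same split into the regimes $\|\bs{\mu}\|\to 0$ and $\|\bs{\mu}\|\to\infty$, the same assignment of roles to the two hypotheses (Assumption \ref{Hyp:moyenne_non_zero} guaranteeing that the limiting direction of $\partial_{\mu_i}\bs{\Sigma}_{\bs{\mu}}$ is not annihilated by the projection $\bs{W}\trans\cdot\,\bs{W}$ at low correlation, Assumption \ref{Hyp:vraisemblance_discriminante} supplying the factor $\|\bs{\mu}\|^{\epsilon_{\bs{y}}}$ that makes $L(\bs{y}|\bs{\mu})\,f_i$ integrable at high correlation), and the same endgame via continuity of the conditionals and a minorization argument inherited from \citet{Mur18}. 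So the route is the right one.

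The gap is that the two estimates on which everything rests are announced but not established, and they are where the entire difficulty of the theorem lies. First, at $\|\bs{\mu}\|\to\infty$ the paper does not expand $\bs{\Sigma}_{\bs{\mu}}=\bs{I}_n+\bs{E}$ order by order; it normalizes $\partial_{\mu_i}\bs{\Sigma}_{\bs{\mu}}$ by its sup-norm and identifies the limit $\bs{L}_{i,\bs{\alpha}}$ explicitly as a matrix supported on the pairs of design points achieving minimal rescaled distance, hence of rank at most $2r$ for $Q(r,n)$-almost all designs. It is this rank bound, combined with the ``more than $2r$ non-null coordinates'' clause of Assumption \ref{Hyp:moyenne_non_zero} and the inequality $2r<n-p$, that forces $\bs{W}\trans\bs{L}_{i,\bs{\alpha}}\bs{W}$ to be nonzero with a nontrivial eigenvalue spread, and a finiteness argument over the possible $\bs{L}_{i,\bs{\alpha}}$ is then needed to make the resulting lower bound on $f_i$ uniform in $\bs{\alpha}$. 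Your proposal gestures at this mechanism but never produces $\bs{L}_{i,\bs{\alpha}}$ or its rank, so the claimed non-cancellation remains an assertion. Second, at $\|\bs{\mu}\|\to 0$ a second-order Taylor expansion is not what the paper uses and would not by itself deliver what the minorization needs: the paper proves the universal bound $f_i(\mu_i|\bs{\mu}_{-i})\leqslant(n-p)(2\nu+r)\mu_i^{-1}$, valid for \emph{all} $\bs{\mu}$, via the spectral representation of the Matérn kernel (writing $\partial_{\mu_i}\bs{\Sigma}_{\bs{\mu}}=-\mu_i^{-1}\bs{\Sigma}_{\bs{\mu}}+\bs{F}_{\bs{\mu}}$ with $\bs{F}_{\bs{\mu}}$ positive definite and $\Tr[\bs{F}_{\bs{\mu}}\bs{\Sigma}_{\bs{\mu}}^{-1}]\leqslant n(2\nu+r)\mu_i^{-1}$), and must then split the region near the origin according to whether $\mu_i\leqslant\|\bs{\mu}\|^{a}$ or not in order to convert the $O(\|\bs{\mu}\|^{\epsilon_{\bs{y}}})$ of Assumption \ref{Hyp:vraisemblance_discriminante} into an $O(\mu_i^{-1+\epsilon'})$ bound on $L\cdot f_i$ that is integrable in $\mu_i$ uniformly over $\bs{\mu}_{-i}$. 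This exponent bookkeeping is essential and absent from your sketch.
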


The next two results, which are proved in Appendix \ref{App:Gibbs_posterior}, concern particular settings where Assumptions \ref{Hyp:moyenne_non_zero} and \ref{Hyp:vraisemblance_discriminante} are both verified and therefore Theorem \ref{Thm:compromis_gibbs_existe} yields the uniform ergodicity of $P_{\bs{y}}$.

\begin{prop} \label{Prop:compromis_gibbs_existe_kriOrdin}
Consider the particular case of the above described setting where $p=1$ and $\mathcal{F}_p$ is the space of all constant functions (Ordinary Kriging), and assume that one of the following conditions is satisfied:
\begin{enumerate}
\item  $1<\nu<2$ and $n>r+3$;
\item  $2<\nu<3$ and $n>(r+1)(r/2+2)$.
\end{enumerate}
Then, for $Q(r,n)$-almost all design sets, there exists a hyperplane $\mathcal{H}$ of $\R^n$ such that $\forall \bs{y} \in \R^n \setminus \mathcal{H}$, 
$P_{\bs{y}}$ is uniformly ergodic.

\end{prop}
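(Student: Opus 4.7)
The plan is to apply Theorem \ref{Thm:compromis_gibbs_existe}. The quantitative bounds $\nu > 1$ and $n > p+r+2 = r+3$ hold under either condition of the proposition (in case 2 one verifies $(r+1)(r/2+2) > r+3$ for all $r \geq 1$). For Assumption \ref{Hyp:moyenne_non_zero} I would invoke the earlier ordinary-kriging proposition: in this setting $\bs{H}$ spans the constants, so a non-null element of its span has exactly $n$ non-null entries, and the requirement reduces to $n > 2r$, which is implied by the $n$-bounds given. The substantive task is Assumption \ref{Hyp:vraisemblance_discriminante}, namely that $L(\bs{y}\mid\bs{\mu}) = O(\|\bs{\mu}\|^{\epsilon_{\bs{y}}})$ as $\|\bs{\mu}\| \to 0$, for $\bs{y}$ outside some hyperplane of $\R^n$.

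To handle this I would Taylor-expand the Matérn correlation matrix around $\bs{\mu} = \bs{0}$, which corresponds to infinite correlation length. Since the Matérn kernel is $2k$-times differentiable at the origin for $\nu \in (k, k+1)$, one obtains
\begin{equation*}
\coIrr \;=\; \bs{1}\bs{1}\trans \;+\; \sum_{\alpha} c_\alpha(\bs{\mu})\,\bs{M}_\alpha \;+\; R(\bs{\mu}),
\end{equation*}
where each $\bs{M}_\alpha$ is a symmetric matrix depending only on the design, the $c_\alpha$ are monomials in the $\mu_i$, and $R(\bs{\mu}) = o(\|\bs{\mu}\|^{2\nu})$. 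Because $\bs{W}\trans \bs{1} = \bs{0}$ in ordinary kriging, the $\bs{1}\bs{1}\trans$ term is annihilated, and $\bs{W}\trans \coIrr \bs{W}$ inherits a polynomial structure with leading terms of order $\|\bs{\mu}\|^2$ in case 1 and extending through order $\|\bs{\mu}\|^4$ in case 2. Using Equation (\ref{Eq:vraisemblance_integree}), I would then track the competition between $|\bs{W}\trans \coIrr \bs{W}|^{-1/2}$ and the quadratic form $\bs{y}\trans \bs{W}(\bs{W}\trans \coIrr \bs{W})^{-1} \bs{W}\trans \bs{y}$ in the limit $\bs{\mu} \to \bs{0}$. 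The net exponent of $\|\bs{\mu}\|$ in $L^1$ turns out to be strictly positive precisely when $n$ satisfies the bounds stated in the two cases.

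The key genericity argument is that for $Q(r,n)$-almost every design set, the matrices $\bs{M}_\alpha$ span enough independent directions for the leading matrix pencil to be non-degenerate of rank $n-1$. The excluded hyperplane of $\bs{y}$'s is that on which the dominant component of the quadratic form vanishes, preventing the polynomial lower bound. The main obstacle is the algebraic bookkeeping in case 2: the fourth-order Taylor terms generate on the order of $r^2$ distinct monomial directions, and proving rank $n-1$ generically in the design points requires $n > (r+1)(r/2+2)$, which is exactly the count of degrees of freedom carried by symmetric polynomials through degree four in $r$ variables. The measure-zero exceptional sets arise as zero loci of nontrivial polynomial identities in the design coordinates, and establishing that these identities are indeed nontrivial is the essential technical work deferred to Appendix \ref{App:Gibbs_posterior}.
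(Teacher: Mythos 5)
Your proposal takes essentially the same route as the paper: Proposition \ref{Prop:compromis_gibbs_existe_kriOrdin} is proved by combining Theorem \ref{Thm:compromis_gibbs_existe} with a verification of Assumptions \ref{Hyp:moyenne_non_zero} and \ref{Hyp:vraisemblance_discriminante}, and Assumption \ref{Hyp:vraisemblance_discriminante} is obtained for $\bs{y}$ outside a hyperplane by precisely the kind of Taylor expansion of the Mat\'ern kernel around $\bs{\mu}=\bs{0}$ that you sketch (the paper packages this as Proposition \ref{Prop:krigeage_ordinaire} and defers the expansion argument to Lemmas 50 and 51 of the earlier Simple Kriging paper, so your sketch of where the bounds on $n$ come from cannot be checked line by line against this text, but it is the intended mechanism, including the cancellation of $\bs{1}\bs{1}\trans$ by $\bs{W}\trans$).

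There is, however, one concrete step that fails as written. You assert that the requirement $n>2r$ for Assumption \ref{Hyp:moyenne_non_zero} in Ordinary Kriging ``is implied by the $n$-bounds given.'' In case 1 the bound is $n>r+3$, and $r+3<2r$ as soon as $r\geqslant 4$: for instance $r=5$, $n=9$ satisfies $n>r+3$ but not $n>2r$. Moreover, the low-correlation analysis in Appendix \ref{App:Gibbs_posterior} actually needs the slightly stronger $2r<n-p$, i.e.\ $n>2r+1$ with $p=1$. So in case 1 the condition $n>2r+1$ must be imposed or verified separately for $r\geqslant 4$; it is not a consequence of $n>r+3$. (In case 2 one checks $(r+1)(r/2+2)=(r^2+5r+4)/2>2r+1$ for all $r\geqslant 1$, so no issue arises there.) To be fair, this looseness is already latent in the statement being proved, but your proof makes the false implication explicit, and a correct write-up should either add the hypothesis $n>2r+1$ in case 1 or explain why it can be dispensed with.
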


\begin{prop} \label{Prop:compromis_gibbs_existe_kriD1}
Consider the particular case of the above described setting where $\mathcal{F}_p$ is included within the space of all polynomials of degree 0 and 1 (so $p\leqslant r+1$) and assume that the following condition is satisfied:
\begin{itemize}
\item $2<\nu<3$ and $n>r(r+1)/2 + 2r + 3$.
\end{itemize}
Then, for $Q(r,n)$-almost all design sets, there exists a hyperplane $\mathcal{H}$ of $\R^n$ such that $\forall \bs{y} \in \R^n \setminus \mathcal{H}$, 
$P_{\bs{y}}$ is uniformly ergodic.
\end{prop}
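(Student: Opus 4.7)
The plan is to reduce the statement to Theorem \ref{Thm:compromis_gibbs_existe} by verifying, for $Q(r,n)$-almost every design set, both Assumptions \ref{Hyp:moyenne_non_zero} and \ref{Hyp:vraisemblance_discriminante} on the complement of a suitable hyperplane $\mathcal{H}$.

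First I would check the numerical preconditions of Theorem \ref{Thm:compromis_gibbs_existe}. Since $2<\nu<3$ we certainly have $\nu>1$, and $n>r(r+1)/2+2r+3$ together with $p\leqslant r+1$ gives $n>2r+3\geqslant p+r+2$. Next, Assumption \ref{Hyp:moyenne_non_zero} is obtained directly from Proposition \ref{Prop:krigeage_affine_hyp_moyenne_non_zero}: the bound $n>r(r+1)/2+2r+3$ trivially exceeds $3r$ for every $r\geqslant 1$, and $\mathcal{F}_p$ is by hypothesis included in the affine polynomial space, so the proposition applies for $Q(r,n)$-almost every design set.

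The substantive step is Assumption \ref{Hyp:vraisemblance_discriminante}. Using the integrated-likelihood formula (\ref{Eq:vraisemblance_integree}), I would reduce the condition $L(\bs{y}|\bs{\mu})=O(\|\bs{\mu}\|^{\epsilon_{\bs{y}}})$ to a controlled comparison between $|\bs{W}\trans\bs{\Sigma}_{\bs{\mu}}\bs{W}|^{-1/2}$ and $(\bs{y}\trans\bs{W}(\bs{W}\trans\bs{\Sigma}_{\bs{\mu}}\bs{W})^{-1}\bs{W}\trans\bs{y})^{-(n-p)/2}$ as $\|\bs{\mu}\|\to 0$. The tool is the Taylor expansion of the Matérn kernel for $2<\nu<3$, which has analytic part of even orders up to $\|h\|_{\bs{\mu}}^4$ plus a non-analytic term $a_{2\nu}\|h\|_{\bs{\mu}}^{2\nu}$. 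Because $\bs{W}\trans\bs{H}=\bs{0}$, the constant term in $\bs{\Sigma}_{\bs{\mu}}$ disappears after conjugation by $\bs{W}$; the monomials $x_i^2$ and $y_i^2$ appearing in $(x_i-y_i)^2$ disappear for the same reason (via the factor $\bs{1}$), and $x_iy_i$ vanishes whenever the coordinate functions are in $\langle\bs{H}\rangle$ or contributes only in rank $\leqslant r$ otherwise. The degree-$4$ pieces $(x_i-y_i)^2(x_j-y_j)^2$ survive the $\bs{W}$-conjugation only through the rank at most $r(r+1)/2$ subspace spanned on the design by the monomials $\{\bs{h}_i^{(2)},\bs{h}_i\bs{h}_j\}_{i\leqslant j}$.

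The counting then runs as follows. Setting $k:=r(r+1)/2$, the eigenvalues of $\bs{W}\trans\bs{\Sigma}_{\bs{\mu}}\bs{W}$ split into at most $k$ ``polynomial'' eigenvalues of order $\|\bs{\mu}\|^4$ and $n-p-k\geqslant 1$ ``non-polynomial'' eigenvalues of order $\|\bs{\mu}\|^{2\nu}$, giving $|\bs{W}\trans\bs{\Sigma}_{\bs{\mu}}\bs{W}|\sim\|\bs{\mu}\|^{4k+2\nu(n-p-k)}$. For $\bs{y}$ outside the hyperplane $\mathcal{H}$ of vectors whose $\bs{W}\trans\bs{y}$ lies entirely in the polynomial subspace, the quadratic form blows up as $\|\bs{\mu}\|^{-2\nu}$; multiplying, a direct exponent count yields $L(\bs{y}|\bs{\mu})\sim\|\bs{\mu}\|^{k(\nu-2)}$ with $k(\nu-2)>0$ since $\nu>2$, giving Assumption \ref{Hyp:vraisemblance_discriminante}. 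The bound $n>r(r+1)/2+2r+3$ enters precisely to guarantee $n-p-k\geqslant 1$, so that the ``non-polynomial'' subspace is non-trivial and $\mathcal{H}$ is indeed a proper hyperplane.

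The main obstacle is the exact rank computation of the degree-$4$ contribution: one has to show that the vectors $\{\bs{h}_i^{(2)},\bs{h}_i\bs{h}_j\}_{i\leqslant j}$, viewed modulo $\langle\bs{H}\rangle$, are linearly independent in $\R^{n-p}$ for almost every design set, which is a Zariski-generic condition on the design and is where the $Q(r,n)$-negligible exception comes from. The rest of the argument is bookkeeping: making sure each monomial appearing in the Matérn expansion up to order $\|h\|_{\bs{\mu}}^4$ is either killed by the conjugation or absorbed into one of the two subspaces used in the counting, so that no hidden contribution raises the leading exponent of the determinant.
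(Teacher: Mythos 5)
Your proposal matches the paper's proof: the paper obtains this result in exactly the same way, by combining Theorem \ref{Thm:compromis_gibbs_existe} with Proposition \ref{Prop:krigeage_affine_hyp_moyenne_non_zero} (which secures Assumption \ref{Hyp:moyenne_non_zero}, since $n>r(r+1)/2+2r+3>3r$) and with Proposition \ref{Prop:krigeage_affine} (which secures Assumption \ref{Hyp:vraisemblance_discriminante} for $\bs{y}$ outside a hyperplane). The Taylor-expansion and eigenvalue-counting argument you sketch is precisely the content of Proposition \ref{Prop:krigeage_affine}, whose detailed proof the paper defers to Lemmas 50 and 51 of \citet{Mur18}.
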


\begin{rmq}
In Propositions \ref{Prop:nu<1}, \ref{Prop:compromis_gibbs_existe_kriOrdin} and \ref{Prop:compromis_gibbs_existe_kriD1}, the condition that the observation $\bs{y}$ should not belong to a given negligible (for the Lebesgue measure) subset of $\R^n$ is fairly natural: for the Kriging model to be adequate, 
$\bs{y}$ must not look like a realization of a degenerate Gaussian vector.
Theorem \ref{Thm:compromis_gibbs_existe} does not really dispense with it, as it is implied by Assumption \ref{Hyp:vraisemblance_discriminante}.
\end{rmq}

To sum up the results of this section, to ensure that the Gibbs reference posterior exists and can be accessed through Gibbs sampling, one should check that one of the following assertions is true:

\begin{itemize}
\item $\nu>1$, $n>r+p+2$ and both Assumptions \ref{Hyp:moyenne_non_zero} and \ref{Hyp:vraisemblance_discriminante} are verified;
\item $\mathcal{F}_p$ contains only constant functions and $1 < \nu < 2$ and $n>r+3$;

\item $\mathcal{F}_p$ contains only polynomials of degree 0 and 1, $2 < \nu < 3$ and $n>r(r+1)/2 + 2r + 3$;
\item $0<\nu<1$, $n>p+1$, no non-null constant function belongs to $\mathcal{F}_p$ and Assumption \ref{Hyp:moyenne_non_zero} is verified.
\end{itemize}

\section{Comparison of the predictive performance of the full-Bayesian approach versus MLE and MAP plug-in approaches}
\label{Sec:comp_predictions}

In this section, we evaluate the predictive performance resulting from the Gibbs reference posterior distribution $\pi_G(\bs{\theta} | \bs{y})$ in the context of a well-specified model, and then when emulating some deterministic real functions. We contrast the full-Bayesian approach, in which the Full Gibbs reference Posterior Distribution (FPD) is used, with two plug-in approaches: one where the Maximum Likelihood Estimator (MLE) and the other where the Maximum A Posteriori (MAP) estimator is assumed to be the true value of $\bs{\theta}$. All approaches make use of the reference posterior $\pi(\bs{\beta},\sigma^2 | \bs{y}, \bs{\theta})$. \medskip

We use the following terminology. We call Simple Kriging the Kriging model where the mean function is assumed to be known, whether this assumption is correct or known.
We call Ordinary Kriging any Universal Kriging model where the mean function space is the space of constant functions.
We call Affine Kriging any Universal Kriging model where the mean function space is the space of affine functions.

\subsection{Well-specified model}

We first consider well-specified models, specifically Kriging models with unknown parameters $(\bs{\beta},\sigma^2, \bs{\theta})$ emulating actual Gaussian processes with variance $\sigma^2=1$ and Matérn anisotropic geometric autocorrelation kernel with smoothness $\nu=5/2$. Moreover, the true mean function of the Gaussian process belongs to the assumed mean function space $\mathcal{F}_p$. \medskip

The spatial domain is the unit cube $(0,1)^r$ and the considered design sets all contain $n$ points independently chosen according to the Lebesgue measure on the domain $(0,1)^r$. \medskip

The following tables give the average coverage and average mean length of prediction intervals. To define these notions, we introduce the following notations:

\begin{itemize}
\item $\bs{Y}$ is the Gaussian process, and $\bs{Y}(\bs{x})$ is the vector of the values taken by said process at the points in the design set $\bs{x}$;
\item $T$ is a random variable which follows the Uniform distribution on the unit cube $(0,1)^r$. It represents the ``test'' point;
\item $\bs{X}$ is the random design set following the Uniform distribution on $\left((0,1)^{r}\right)^{n}$.
\item $\bs{Y}$, $T$ and $\bs{X}$ are mutually independent;
\item $f$ is a function defined on $\left((0,1)^{r}\right)^{n} \times \R^{n} \times (0,1)^r$ which associates to $(\bs{x},\bs{y},t)$ the prediction interval at $t$ of the Gaussian process, based on the knowledge of its value $\bs{y}$ on the design set $\bs{x}$.
\end{itemize}

\begin{defn}
The average coverage is the probability (with respect to the distributions of $\bs{X}$, $\bs{Y}$ and $T$) that $\bs{Y}(T) \in f(\bs{X},\bs{Y}(\bs{X}),T)$.
\end{defn}

\begin{defn}
The average mean length is the expectation (with respect to the distributions of $\bs{X}$, $\bs{Y}$ and $T$) of the length of $ f(\bs{X},\bs{Y}(\bs{X}),T)$.
\end{defn}

The average coverage $\mathbb{P} [\bs{Y}(T) \in f(\bs{X},\bs{Y}(\bs{X}),T)]$ is numerically computed as

\[ 
\mathbb{P} [\bs{Y}(T) \in f(\bs{X},\bs{Y}(\bs{X}),T)] = \E[\mathbb{P}[\bs{Y}(T) \in f(\bs{X},\bs{Y}(\bs{X}),T) | \bs{X}, \bs{Y}(\bs{X})] ]
\]

 over 500 random design sets and for each design set 1000 random test points. The average mean length is computed in a similar fashion. \medskip

In this subsection and the following one, we take $n=30$ and $r=3$. \medskip

In the first set of simulations, we use a well-specified Ordinary Kriging model, with the unknown mean 5. As $r=3$ and $n=30$, Proposition \ref{Prop:compromis_gibbs_existe_kriOrdin} is applicable. \medskip

The results given in Table \ref{Tab:IP_taux_couverture_krigeage_ordinaire} show that using the full posterior distribution (FPD) to derive the predictive distribution is the best possible choice from a frequentist point of view as the nominal value is nearly matched by the average coverage. Predictive Intervals derived from the MAP estimator do not perform as well, and Predictive Intervals derived from the MLE perform even worse. \medskip

\begin{table}[!ht]
\begin{center}
\begin{tabular}{|c|c|c|c|c|}
\hline 
 \multicolumn{5}{|c|}{\textbf{Average Coverage}} \\
\hline
\textbf{Corr. lengths} & \textbf{True} & \textbf{MLE} & \textbf{MAP} & \textbf{FPD} \\ 
\hline 
0.4 – 0.8 – 0.2 & 0.95 & 0.88 & 0.91 & 0.95 \\

0.5 – 0.5 – 0.5 & 0.95 & 0.88 & 0.90 & 0.94 \\

0.7 – 1.3 – 0.4 & 0.95 & 0.90 & 0.92 & 0.95 \\

0.8 – 0.3 – 0.6 & 0.95 & 0.89 & 0.91 & 0.94 \\

0.8 – 1.0 – 0.9 & 0.95 & 0.90 & 0.92 & 0.94 \\
\hline
\end{tabular} 
\caption{For a Gaussian Process with constant mean function equal to 5, variance parameter 1 and smoothness parameter 5/2, average coverage of 95\% Prediction Intervals produced by an Ordinary Kriging model. ``True'' stands for the Simple Kriging prediction based on the knowledge of the true mean parameter, variance parameter and vector of correlation lengths.}
\label{Tab:IP_taux_couverture_krigeage_ordinaire}
\end{center}
\end{table}

The results given in Table \ref{Tab:IP_longueur_moyenne_krigeage_ordinaire} show that Predictive Intervals arising from the full Gibbs reference posterior distribution (FPD) are on average somewhat larger than those resulting from knowledge of the true parameters, while intervals arising from both types of parameter estimation (MLE and MAP) are too short. \medskip

\begin{table}[!ht]
\begin{center}
\begin{tabular}{|c|c|c|c|c|}
\hline 
 \multicolumn{5}{|c|}{\textbf{Average Mean Length}} \\
\hline
\textbf{Corr. lengths} & \textbf{True} & \textbf{MLE} & \textbf{MAP} & \textbf{FPD} \\ 
\hline 
0.4 – 0.8 – 0.2 & 2.23 & 2.06 & 2.14 & 2.58 \\

0.5 – 0.5 – 0.5 & 1.69 & 1.55 & 1.59 & 1.83  \\

0.7 – 1.3 – 0.4 & 1.09 & 1.02 & 1.07 & 1.20 \\

0.8 – 0.3 – 0.6 & 1.63 & 1.51 & 1.57 & 1.81 \\

0.8 – 1.0 – 0.9 & 0.71 & 0.66 & 0.69 & 0.76 \\
\hline
\end{tabular} 
\caption{For a Gaussian Process with constant mean function equal to 5, variance parameter 1 and smoothness parameter 5/2, average mean length of 95\% Prediction Intervals produced by an Ordinary Kriging model. ``True'' stands for the Simple Kriging prediction based on the knowledge of the true mean parameter, variance parameter and vector of correlation lengths.}
\label{Tab:IP_longueur_moyenne_krigeage_ordinaire}
\end{center}
\end{table}

Consider now Universal Kriging models where the true mean function is the polynomial $(x_1,x_2,x_3) \mapsto 5 + 4x_1 + 3 x_2 + 2 x_3$, and the model (correctly) assumes that it belongs to the 4-dimensional space ($p=4$) spanned by the functions mapping $(x_1,x_2,x_3)$ to $1$, $x_1$, $x_2$ and $x_3$ respectively. For such Affine Kriging models, Proposition \ref{Prop:compromis_gibbs_existe_kriD1} is applicable. \medskip

As shown in Table \ref{Tab:IP_taux_couverture_krigeage_affine}, Predictive Intervals resulting from both plug-in approaches (MLE, MAP) and from the full posterior distribution perform a little worse than in the Ordinary Kriging setting, but their relative performances stay the same. \medskip

\begin{table}
\begin{center}
\begin{tabular}{|c|c|c|c|c|}
\hline 
 \multicolumn{5}{|c|}{\textbf{Average Coverage}} \\
\hline 
\textbf{Corr. lengths} & \textbf{True} & \textbf{MLE} & \textbf{MAP} & \textbf{FPD} \\ 
\hline 
0.4 – 0.8 – 0.2 & 0.95 & 0.87 & 0.90 & 0.94 \\

0.5 – 0.5 – 0.5 & 0.95 & 0.87 & 0.89 & 0.92 \\

0.7 – 1.3 – 0.4 & 0.95 & 0.89 & 0.92 & 0.94 \\

0.8 – 0.3 – 0.6 & 0.95 & 0.87 & 0.90 & 0.93 \\

0.8 – 1.0 – 0.9 & 0.95 & 0.89 & 0.92 & 0.93 \\
\hline
\end{tabular} 
\caption{For a Gaussian Process with mean function $(x_1,x_2,x_3) \mapsto 5 + 4x_1 + 3x_2 + 2x_3$, variance parameter 1 and smoothness parameter 5/2, average coverage of 95\% Prediction Intervals produced by an Affine Kriging model. ``True'' stands for the Simple Kriging prediction based on the knowledge of the true mean function, variance parameter and vector of correlation lengths.}
\label{Tab:IP_taux_couverture_krigeage_affine}
\end{center}
\end{table}

Table \ref{Tab:IP_longueur_moyenne_krigeage_affine}  shows that the average mean lengths of Predictive Intervals are not very different in Affine Kriging than in Ordinary Kriging when it comes to the FPD. However, they are larger in Affine Kriging than in Ordinary Kriging when it comes to the MLE and the MAP.
Interestingly, Predictive Intervals resulting from the MAP have about the same size as Predictive Intervals derived when all parameters are known. Those derived using the MLE are shorter, and those derived from the FPD are larger.

\begin{table}[!ht]
\begin{center}
\begin{tabular}{|c|c|c|c|c|}
\hline 
 \multicolumn{5}{|c|}{\textbf{Average Mean Length}} \\
\hline 
\textbf{Corr. lengths} & \textbf{True} & \textbf{MLE} & \textbf{MAP} & \textbf{FPD} \\ 
\hline 
0.4 – 0.8 – 0.2 & 2.23 & 2.14 & 2.23 & 2.59 \\

0.5 – 0.5 – 0.5 & 1.69 & 1.57 & 1.66 & 1.83 \\

0.7 – 1.3 – 0.4 & 1.09 & 1.04 & 1.10 & 1.20 \\

0.8 – 0.3 – 0.6 & 1.63 & 1.54 & 1.61 & 1.80 \\

0.8 – 1.0 – 0.9 & 0.71 & 0.67 & 0.71 & 0.75 \\
\hline
\end{tabular} 
\caption{For a Gaussian Process with mean function $(x_1,x_2,x_3) \mapsto 5 + 4x_1 + 3x_2 + 2x_3$, variance parameter 1 and smoothness parameter 5/2, average mean length of 95\% Prediction Intervals produced by an Affine Kriging model. ``True'' stands for the Simple Kriging prediction based on the knowledge of the true mean function, variance parameter and vector of correlation lengths.}
\label{Tab:IP_longueur_moyenne_krigeage_affine}
\end{center}
\end{table}

For reference, we give the tables obtained in the Simple Kriging case, that is the case where the Gaussian Process is known to have null mean function. Table \ref{Tab:IP_taux_couverture} gives the average coverages and Table \ref{Tab:IP_longueur_moyenne} the average mean lengths.

\begin{table}[!ht]
\begin{center}
\begin{tabular}{|c|c|c|c|c|}
\hline 
 \multicolumn{5}{|c|}{\textbf{Average Coverage}} \\
\hline
\textbf{Corr. lengths} & \textbf{True} & \textbf{MLE} & \textbf{MAP} & \textbf{FPD} \\ 
\hline 
0.4 – 0.8 – 0.2 & 0.95 & 0.88 & 0.91 & 0.95 \\

0.5 – 0.5 – 0.5 & 0.95 & 0.89 & 0.90 & 0.94 \\
 
0.7 – 1.3 – 0.4 & 0.95 & 0.90 & 0.92 & 0.95 \\

0.8 – 0.3 – 0.6 & 0.95 & 0.89 & 0.91 & 0.95 \\

0.8 – 1.0 – 0.9 & 0.95 & 0.90 & 0.92 & 0.94 \\
\hline
\end{tabular} 
\caption{For a Gaussian Process with null mean function, variance parameter 1 and smoothness parameter 5/2, average coverage of 95\% Prediction Intervals produced by a Simple Kriging model. ``True'' stands for the prediction based on the knowledge of the true variance parameter and the true vector of correlation lengths.}
\label{Tab:IP_taux_couverture}
\end{center}
\end{table}

\begin{table}[!ht]
\begin{center}
\begin{tabular}{|c|c|c|c|c|}
\hline 
 \multicolumn{5}{|c|}{\textbf{Average Mean Length}} \\
\hline
\textbf{Corr. lengths} & \textbf{True} & \textbf{MLE} & \textbf{MAP} & \textbf{FPD} \\ 
\hline 
0.4 – 0.8 – 0.2 & 2.23 & 2.05 & 2.13 & 2.59 \\

0.5 – 0.5 – 0.5 & 1.69 & 1.55 & 1.58 & 1.84 \\

0.7 – 1.3 – 0.4 & 1.09 & 1.02 & 1.07 & 1.21 \\

0.8 – 0.3 – 0.6 & 1.63 & 1.51 & 1.56 & 1.82 \\

0.8 – 1.0 – 0.9 & 0.71 & 0.66 & 0.69 & 0.76 \\
\hline
\end{tabular} 
\caption{For a Gaussian Process with null mean function, variance parameter 1 and smoothness parameter 5/2, average mean length of 95\% Prediction Intervals produced by a Simple Kriging model. ``True'' stands for the prediction based on the knowledge of the true variance parameter and the true vector of correlation lengths.}
\label{Tab:IP_longueur_moyenne}
\end{center}
\end{table}

The performance of Ordinary Kriging when the mean function is constant is nearly the same as that of Simple Kriging when the mean function is known. 

The performance of Affine Kriging when the mean function is affine, however, is noticeably poorer than the performance of Simple Kriging when the mean function is known:
its average coverage is lower. This is not too surprising, since the prediction problem is more difficult.

\subsection{Misspecified models}

In this subsection, we deal with the performance of Kriging in cases where the Gaussian Process does not fit all assumptions. \medskip

First, we evaluate the performance of Universal Kriging in a context where the true mean function does not belong to the assumed mean function space $\mathcal{F}_p$. Precisely, we consider a Gaussian process with mean function $(x_1,x_2,x_3) \mapsto 5 + 4x_1 + 3x_2 + 2 x_3$ and evaluate the performance of Simple Kriging (assuming the mean function to be null) with respect to that of Affine Kriging, which is the correct model in this situation.

Tables \ref{Tab:IP_taux_couverture_krigeage_simple_moyenne_affine} and \ref{Tab:IP_longueur_moyenne_krigeage_simple_moyenne_affine} show that Simple Kriging performs significantly worse than Affine Kriging when the mean function is $(x_1,x_2,x_3) \mapsto 5 + 4x_1 + 3x_2 + 2x_3$, both in terms of average coverage and average mean length of Predictive Intervals. 
Relative performances of MLE, MAP and FPD once again stay the same, though.

\begin{table}
\begin{center}
\begin{tabular}{|c|c|c|c|c|}
\hline 
 \multicolumn{5}{|c|}{\textbf{Average Coverage}} \\
\hline 
\textbf{Corr. lengths} & \textbf{True} & \textbf{MLE} & \textbf{MAP} & \textbf{FPD} \\ 
\hline 
0.4 – 0.8 – 0.2 & 0.95 & 0.77 & 0.81 & 0.88 \\

0.5 – 0.5 – 0.5 & 0.95 & 0.80 & 0.82 & 0.89 \\

0.7 – 1.3 – 0.4 & 0.95 & 0.82 & 0.86 & 0.91 \\

0.8 – 0.3 – 0.6 & 0.95 & 0.79 & 0.83 & 0.89 \\

0.8 – 1.0 – 0.9 & 0.95 & 0.82 & 0.86 & 0.91 \\
\hline
\end{tabular} 
\caption{For a Gaussian Process with mean function $(x_1,x_2,x_3) \mapsto 5 + 4x_1 + 3 x_2 + 2 x_3$, variance parameter 1 and smoothness parameter 5/2, average coverage of 95\% Prediction Intervals resulting from Simple Kriging (assuming the mean function is null for MLE/MAP/FPD and knowing it is $(x_1,x_2,x_3) \mapsto 5 + 4x_1 + 3 x_2 + 2 x_3$ for ``True''). ``True'' stands for the prediction based on the knowledge of the true mean function, variance parameter and vector of correlation lengths.}
\label{Tab:IP_taux_couverture_krigeage_simple_moyenne_affine}
\end{center}
\end{table}

\begin{table}[!ht]
\begin{center}
\begin{tabular}{|c|c|c|c|c|}
\hline 
 \multicolumn{5}{|c|}{\textbf{Average Mean Length}} \\
\hline 
\textbf{Corr. lengths} & \textbf{True} & \textbf{MLE} & \textbf{MAP} & \textbf{FPD} \\ 
\hline 
0.4 – 0.8 – 0.2 & 2.23 & 2.23 & 2.36 & 2.78 \\

0.5 – 0.5 – 0.5 & 1.69 & 1.61 & 1.66 & 1.92  \\

0.7 – 1.3 – 0.4 & 1.09 & 1.03 & 1.13 & 1.28 \\

0.8 – 0.3 – 0.6 & 1.63 & 1.54 & 1.63 & 1.87 \\

0.8 – 1.0 – 0.9 & 0.71 & 0.64 & 0.68 & 0.77  \\
\hline
\end{tabular} 
\caption{For a Gaussian Process with mean function $(x_1,x_2,x_3) \mapsto 5 + 4x_1 + 3 x_2 + 2 x_3$, variance parameter 1 and smoothness parameter 5/2, average mean length of 95\% Prediction Intervals resulting from Simple Kriging (assuming the mean function is null for MLE/MAP/FPD and knowing it is $(x_1,x_2,x_3) \mapsto 5 + 4x_1 + 3 x_2 + 2 x_3$ for ``True''). ``True'' stands for the prediction based on the knowledge of the true mean function, variance parameter and vector of correlation lengths.}
\label{Tab:IP_longueur_moyenne_krigeage_simple_moyenne_affine}
\end{center}
\end{table}

This observation may lead us to investigate how Simple Kriging behaves with respect to Affine Kriging when the Gaussian Process is smoother than expected. Table \ref{Tab:IP_GASP_smoothness_inf} gives the average coverage and average mean length of Prediction Intervals resulting from the same procedure as before -- that is, the correlation kernel is assumed to be Matérn with smoothness 5/2 -- but the Gaussian Process actually has a Squared Exponential correlation kernel (with correlation lengths 0.4, 0.8 and 0.2). 
These results can be compared with those from Table \ref{Tab:IP_GASP_smoothness_5_demis}, which gives the results obtained when both the actual and the assumed correlation kernel are Matérn with smoothness 5/2 (and the true correlation lengths are also 0.4, 0.8 and 0.2).
It is apparent that performance is better when the actual kernel is Squared Exponential, both in terms of average coverage and average mean length. Recalling that this kernel can be seen as the limit of the Matérn kernel when the smoothness parameter goes to infinity, we conclude that a smoother process leads to an increase in performance for Simple, Ordinary and Affine Kriging.
For Affine Kriging, the smoother process makes Prediction Intervals on average shorter, while the average coverage remains about the same.
For Simple Kriging and to a lesser degree Ordinary Kriging, the smoother process makes Prediction Intervals on average shorter, while also increasing average coverage.

\begin{table}[!ht]
\begin{center}
\begin{tabular}{|c||c|c|c||c|c|c|}
\hline
\textsc{Squared Exponential Correlation Kernel}& \multicolumn{3}{c||}{\textbf{Average coverage}} & \multicolumn{3}{c|}{\textbf{Average mean length}} \\
\hline
\hline
\textbf{Kriging model} & \textbf{MLE} & \textbf{MAP} & \textbf{FPD} & \textbf{MLE} & \textbf{MAP} & \textbf{FPD}\\ 
\hline 
Simple Kriging (mean function assumed null) & 0.83 & 0.86 & 0.92 & 1.63 & 1.76 & 2.02 \\

Ordinary Kriging & 0.88 & 0.90 & 0.93 & 1.70 & 1.79 & 2.01 \\

Affine Kriging & 0.89 & 0.91 & 0.93 & 1.63 & 1.70 & 1.88 \\
\hline
\end{tabular} 
\caption{For a Gaussian Process with mean function $(x_1,x_2,x_3) \mapsto 5 + 4x_1 + 3 x_2 + 2 x_3$, variance parameter 1, and squared exponential correlation kernel with correlation lengths 0.4 - 0.8 - 0.2, average coverage and average mean length of 95\% Prediction Intervals resulting from different types of Kriging (assuming the smoothness parameter to be 5/2).}
\label{Tab:IP_GASP_smoothness_inf}
\end{center}
\end{table}

\begin{table}[!ht]
\begin{center}
\begin{tabular}{|c||c|c|c||c|c|c|}
\hline
\textsc{Matérn kernel with smoothness 5/2}& \multicolumn{3}{c||}{\textbf{Average coverage}} & \multicolumn{3}{c|}{\textbf{Average mean length}} \\
\hline
\hline
\textbf{Kriging model} & \textbf{MLE} & \textbf{MAP} & \textbf{FPD} & \textbf{MLE} & \textbf{MAP} & \textbf{FPD}\\ 
\hline 
Simple Kriging (mean function assumed null) & 0.77 & 0.81 & 0.88 & 2.23 & 2.36 & 2.77 \\

Ordinary Kriging & 0.84 & 0.86 & 0.91 & 2.30 & 2.37 & 2.71 \\

Affine Kriging & 0.87 & 0.90 & 0.94 & 2.14 & 2.23 & 2.59 \\
\hline
\end{tabular} 
\caption{For a Gaussian Process with mean function $(x_1,x_2,x_3) \mapsto 5 + 4x_1 + 3 x_2 + 2 x_3$, variance parameter 1 and Matérn kernel with correlation lengths 0.4 - 0.8 - 0.2 and smoothness parameter 5/2, average coverage and average mean length of 95\% Prediction Intervals resulting from different types of Kriging.}
\label{Tab:IP_GASP_smoothness_5_demis}
\end{center}
\end{table}

All else being equal, smoother processes result in a better quality of prediction for Simple, Ordinary and Affine Kriging, because the observed values of the process yield more information about the value of the process in the neighborhoods of the observation points. This even makes up to some degree for the misspecification of the mean function, so the improvement is greater in the case of Simple Kriging. \medskip

\subsection{Emulating deterministic functions}

In this subsection, we test the ability of the model to predict deterministic functions, namely the 7-dimensional Ackley and Rastrigin functions. The Ackley and the Rastrigin functions have the following expressions:

\begin{align}
A(\bs{x}) &= 20 + \exp(1) - 20 \exp \left( - 0.2 \sqrt{ \frac{1}{7} \sum_{i=1}^{7} x_i^2} \right) - \exp \left( \frac{1}{7} \sum_{i=1}^{7} \cos(2 \pi x_i) \right) ; \\
R(\bs{x}) &= 70 + \sum_{i=1}^{7} \left( x_i^2 - 10 \cos(2 \pi x_i) \right).
\end{align}

Naturally, the notions of average coverage and average mean length for Prediction intervals make no sense in this setting, since we can no longer average our results over the distribution of a Gaussian process. Denoting $d$ the deterministic function, and using previous notations, we may define:

\begin{defn}
The coverage is the probability (with respect to the distribution of $\bs{X}$ and $T$) that $d(T) \in f(\bs{X},d(\bs{X}),T)$.
\end{defn}

\begin{defn}
The mean length is the expectation (with respect to the distribution of $\bs{X}$ and $T$) of the length of  $f(\bs{X},d(\bs{X}),T)$.
\end{defn}

The coverage $\mathbb{P} [\bs{Y}(T) \in f(\bs{X},d(\bs{X}),T)]$ is numerically computed as

$$\mathbb{P} [\bs{Y}(T) \in f(\bs{X},\bs{Y}(\bs{X}),T)] = \E[\mathbb{P}[\bs{Y}(T) \in f(\bs{X},d(\bs{X}),T) | \bs{X}] ]$$

 over 500 design sets and for each design set 1000 test points. The mean length is computed in a similar fashion. \medskip

When emulating the Ackley or the Rastrigin function, we take $r=7$ and $n=100$. \medskip

We must stress that there is no reason that the coverage of 95\% Prediction Intervals, whether produced by MLE or MAP plug-in methods or by the full Gibbs reference posterior distribution should be 95\%, but depending on whether or not Kriging can be considered a good surrogate model for the Ackley or Rastrigin function, the coverage of 95\% Prediction Intervals may be more or less close to the 95\% target figure. \medskip

First, we consider an Ordinary Kriging model with anisotropic geometric Matérn kernel of smoothness $\nu = 5/2$. Because $r=7$ and $n=100$, Proposition  \ref{Prop:compromis_gibbs_existe_kriOrdin} is applicable. \medskip

When emulating the Ackley function (cf. Table \ref{Tab:IP_Ackley}), regardless of the Kriging method used, the full posterior distribution significantly improves the average coverage of Prediction Intervals when compared to the MLE or the MAP, with a comparatively small trade-off regarding the mean length of these intervals. This result is consistent with results obtained with actual realizations of Gaussian processes.

When we emulate the Rastrigin function (cf. Table \ref{Tab:IP_Rastrigin}), coverages come closer to the average coverages given in Tables  \ref{Tab:IP_taux_couverture_krigeage_ordinaire}, \ref{Tab:IP_taux_couverture_krigeage_affine} and \ref{Tab:IP_taux_couverture}. But the more significant fact of the improvement of the coverage by the full posterior distribution is as true here as in the Ackley case. We may simply infer from this that the Rastrigin function can more plausibly be seen as a realization of a Gaussian Process than the Ackley function. \medskip

\begin{table}[!ht]
\begin{center}
\begin{tabular}{|c||c|c|c||c|c|c|}
\hline
\textsc{Emulated function: Ackley}& \multicolumn{3}{c||}{\textbf{Coverage}} & \multicolumn{3}{c|}{\textbf{Mean length}} \\
\hline
\hline 
\textbf{Kriging model} & \textbf{MLE} & \textbf{MAP} & \textbf{FPD} & \textbf{MLE} & \textbf{MAP} & \textbf{FPD}\\ 
\hline 
Simple Kriging & 0.84 & 0.87 & 0.90 & 0.35 & 0.36 & 0.39 \\
\hline
Ordinary Kriging & 0.87 & 0.88 & 0.91 & 0.37 & 0.38 & 0.41 \\
\hline
Affine Kriging & 0.87 & 0.90 & 0.91 & 0.37 & 0.39 & 0.41 \\
\hline
\end{tabular} 
\caption{Coverage and mean length of 95\% Prediction Intervals when emulating the 7-dimensional Ackley function (Matérn anisotropic geometric correlation kernel with smoothness $\nu=5/2$).}
\label{Tab:IP_Ackley}
\end{center}
\end{table}

\begin{table}[!ht]
\begin{center}
\begin{tabular}{|c||c|c|c||c|c|c|}
\hline
\textsc{Emulated function: Rastrigin}& \multicolumn{3}{c||}{\textbf{Coverage}} & \multicolumn{3}{c|}{\textbf{Mean length}} \\
\hline
\hline
\textbf{Kriging model} & \textbf{MLE} & \textbf{MAP} & \textbf{FPD} & \textbf{MLE} & \textbf{MAP} & \textbf{FPD}\\ 
\hline 
Simple Kriging & 0.94 & 0.94 & 0.96 & 28.3 & 28.3 & 30.2 \\
\hline
Ordinary Kriging & 0.91 & 0.92 & 0.94 & 26.2 & 26.7 & 28.3 \\
\hline
Affine Kriging & 0.90 & 0.91 & 0.92 & 25.9 & 26.5 & 27.2 \\
\hline
\end{tabular} 
\caption{Coverage and mean length of 95\% Prediction Intervals when emulating the 7-dimensional Rastrigin function (Matérn anisotropic geometric correlation kernel with smoothness $\nu=5/2$).}
\label{Tab:IP_Rastrigin}
\end{center}
\end{table}

Let us now compare the performance of different Kriging models: Simple (mean function assumed null), Ordinary and Affine. When emulating the Ackley function, Ordinary and Affine Kriging models yield slightly higher Prediction Interval coverages than Simple Kriging, at the cost of slightly higher mean lengths. When emulating the Rastrigin function, we actually observe the reverse phenomenon.

From this study, we can not conclusively ascertain whether Universal Kriging, at least in the form of Ordinary or Affine Kriging, yields better results than Simple Kriging. All that can be said is that these Kriging methods are more or less conservative, but even this depends on the emulated function. 

In the following example (cf. Table \ref{Tab:IP_Rastrigin_lin}), we add the linear function $(x_1,x_2,x_3,x_4,x_5,x_6,x_7) \mapsto 100 \sum_{i=1}^7 x_i$ to the 7-dimensional Rastrigin function. We may expect this modification of the Rastrigin function to be more accurately emulated by Affine Kriging than by Simple Kriging.

\begin{table}[!ht]
\begin{center}
\begin{tabular}{|c||c|c|c||c|c|c|}
\hline
\textsc{Rastrigin + $100 \sum_{i=1}^7 x_i$}& \multicolumn{3}{c||}{\textbf{Coverage}} & \multicolumn{3}{c|}{\textbf{Mean length}} \\
\hline
\hline
\textbf{Kriging model} & \textbf{MLE} & \textbf{MAP} & \textbf{FPD} & \textbf{MLE} & \textbf{MAP} & \textbf{FPD}\\ 
\hline 
Simple Kriging & 0.88 & 0.92 & 0.94 & 25.9 & 29.3 & 31.1 \\
\hline
Ordinary Kriging & 0.87 & 0.91 & 0.93 & 25.7 & 28.5 & 30.4 \\
\hline
Affine Kriging & 0.90 & 0.91 & 0.92 & 26.0 & 26.6 & 27.3 \\
\hline
\end{tabular} 
\caption{Coverage and mean length of 95\% Prediction Intervals when emulating the 7-dimensional Rastrigin function augmented by a linear function (Matérn anisotropic geometric correlation kernel with smoothness $\nu=5/2$).}
\label{Tab:IP_Rastrigin_lin}
\end{center}
\end{table}

The addition of the linear function causes a decrease in performance for Prediction Intervals of both Simple and Ordinary Kriging, in the sense that coverage decreases while mean length increases for MAP and FPD. And the coverage of MLE sinks so much -- from 94\% to 88\% for Simple Kriging and from 91\% to 87\% for Ordinary Kriging -- that its performance may also be said to decrease, even though its mean length is slightly lower. \medskip

The performance of Affine Kriging is unchanged, however, whether one considers the MLE or MAP plug-in methods or the method using the full posterior distribution. This suggests that with a stronger linear component, Affine Kriging would be clearly preferable to Simple or Ordinary Kriging.

To test this, we 
emulate the 7-dimensional Rastrigin function, to which we add a stronger linear term: $(x_1,x_2,x_3,x_4,x_5,x_6,x_7) \mapsto 120 \sum_{i=1}^7 x_i$.

\begin{table}[!ht]
\begin{center}
\begin{tabular}{|c||c|c|c||c|c|c|}
\hline
\textsc{Rastrigin + $120 \sum_{i=1}^7 x_i$ }&  \multicolumn{3}{c||}{\textbf{Coverage}} & \multicolumn{3}{c|}{\textbf{Mean length}} \\
\hline
\textbf{Kriging model} & \textbf{MLE} & \textbf{MAP} & \textbf{FPD} & \textbf{MLE} & \textbf{MAP} & \textbf{FPD}\\ 
\hline 
Simple Kriging & 0.90 & 0.94 & 0.96 & 27.6 & 33.0 & 37.6 \\
\hline
Ordinary Kriging & 0.88 & 0.92 & 0.94 & 26.9 & 30.5 & 32.3 \\
\hline
Affine Kriging & 0.90 & 0.92 & 0.92 & 26.0 & 26.9 & 27.3 \\
\hline
\end{tabular} 
\caption{Coverage and mean length of 95\% Prediction Intervals when emulating the 7-dimensional Rastrigin function augmented by a linear function (Matérn anisotropic geometric correlation kernel with smoothness $\nu=5/2$).}
\label{Tab:IP_Rastrigin_lin_120}
\end{center}
\end{table}

For Simple Kriging, Prediction Intervals coverage and mean length are higher when $120 \sum_{i=1}^7 x_i$ is added to the Rastrigin function (Table \ref{Tab:IP_Rastrigin_lin_120}) rather than $100 \sum_{i=1}^7 x_i$ (Table \ref{Tab:IP_Rastrigin_lin}). This is also true, though to a lesser extent, of Ordinary Kriging. The performance of Affine Kriging, on the other hand, still remains the same because
it can account for any linear term by seeing it as part of the mean function. 
Simple Kriging (assuming the mean function to be null) and Ordinary Kriging do not have this luxury and must assume a greater variance for the Gaussian process, which results in 
more conservative Predictive Intervals. \medskip

Gathering the results obtained above, we conclude that Universal Kriging only significantly improves performance if the trend belongs to the assumed mean function space $\mathcal{F}_p$ and if it stands out. In other words, the signal/noise ratio must be high, where the signal is here the ``true'' mean function and the noise is the stationary Gaussian Process added to it. When no trend of the expected form can be discerned, like when emulating the Ackley or Rastrigin function through Affine Kriging, 
then there is no significant benefit to using Universal instead of Simple Kriging. When the ratio is high, as in the case of the Rastrigin function with the addition of the greater linear term $120 \sum_{i=1}^7 x_i$, Universal Kriging (if the mean function space $\mathcal{F}_p$ is adequately defined) improves upon Simple Kriging, which becomes overly conservative. Further, when the emulated function is particularly smooth, Simple Kriging becomes capable of capturing the trend to some extent even if the mean function is misspecified, thanks to the mechanics of Gaussian conditioning.

\section{Conclusion}

In this work, we provided an Objective Bayesian solution to the problem of taking into account parameter uncertainty when performing prediction based on a Universal Kriging model with anisotropic Matérn autocorrelation kernel. The reference posterior on the location parameter $\bs{\beta}$ and the variance parameter $\sigma^2$ is coupled with the Gibbs reference posterior on the vector of correlation lengths $\bs{\theta}$. By using the Gibbs reference posterior, which is the optimal compromise between the conditional reference posteriors on one correlation length $\theta_i$ based on the knowledge of all other correlation length $\theta_j$ ($j \neq i$), we bypass the problem of determining an ordering on the correlation lengths. Moreover, this solution allows for Gibbs sampling of the posterior distribution, which makes full-Bayesian inference or prediction tractable. \medskip

We proved that the Gibbs reference posterior exists and is proper in several Universal Kriging settings, depending on the number of available observation points and on the smoothness parameter of the Matérn kernel. \medskip

Numerical simulations show that Prediction Intervals produced by the full-Bayesian procedure based on the Gibbs reference posterior have better coverage than those produced by the Maximum Likelihood Estimator or even the Maximum A Posteriori estimator, and that their mean length is only moderately greater. \medskip

In addition, these simulations showed that when emulating deterministic functions,  there is no obvious advantage to using Universal Kriging over Simple Kriging, unless the trend strongly stands out and belongs to the assumed mean function space. \medskip

From a theoretical standpoint, the Universal Kriging setting poses specific problems when compared to the Simple Kriging setting.
As was shown (to our knowledge for the first time) by \cite{BDOS01}, the behavior of the integrated likelihood changes significantly depending on whether functions that take a non-null constant value on the design set are included in the mean function space $\mathcal{F}_p$. The integrated likelihood often fails to vanish in the neighborhood of perfect correlation in Ordinary Kriging models and \textit{a fortiori} in more complex Universal Kriging models where the constant term of the mean function is unknown. \cite{BDOS01} show in the isotropic framework that the reference prior adapts to this situation by being proper (at least for sufficiently rough correlation kernels -- their proof cannot be applied to kernels that are more than once differentiable). We were not able to prove the existence of the Gibbs reference posterior in such situations, however, which is why we require Assumption \ref{Hyp:vraisemblance_discriminante}. Although it is possible that closer analysis may allow us to relax this requirement, we find it more likely that Assumption \ref{Hyp:vraisemblance_discriminante} is the price we pay for defining the Gibbs reference posterior as a compromise between incompatible conditional reference posterior distributions. Indeed, each conditional maximizes the expected information of the model when all but one correlation length are fixed at finite values, i.e. in a context where \textit{perfect correlation is impossible}, whatever may be the value of the unfixed correlation length. Therefore, it is conceivable that in the absence of penalization by the integrated likelihood of the kind given by Assumption \ref{Hyp:vraisemblance_discriminante}, the conditionals may place too much weight on high values of the unfixed correlation length for the Gibbs reference posterior to be well defined. \medskip

Taking into account this restriction in Theorem \ref{Thm:compromis_gibbs_existe}, we proved that the Gibbs reference posterior exists and is the limit of a uniformly converging Markov Chain Monte-Carlo (MCMC) algorithm for commonly used Matérn anisotropic geometric and tensorized correlation kernels when the design set has enough points (cf. Propositions \ref{Prop:compromis_gibbs_existe_kriOrdin} and \ref{Prop:compromis_gibbs_existe_kriD1}). More generally, we would
conjecture that for any noninteger smoothness $\nu \in (1,+\infty)$, and if the mean function space $\mathcal{F}_p$ does not contain polynomials of degree higher than $[\nu]-1$, there exists some lower bound on the cardinal of the design set over which the Gibbs reference posterior exists and the MCMC algorithm uniformly converges to it. However, this lower bound may be too high for practical purposes. \medskip

Future work may involve gaining a better understanding of the significance of the Gibbs reference posterior as a compromise between the incompatible reference conditionals on correlation lengths. This method was primarily intended as a practical means of solving the problem of giving an objective posterior distribution on correlation lengths in the case of anisotropic correlation kernels, where the reference posterior is intractable and may not be proper. But its theoretical properties beyond its propriety, its invariance under reparametrizations of the type $f((\theta_1,...,\theta_r)\trans) = (f_1(\theta_1),...,f_r(\theta_r))\trans$ and its apparent good frequentist performances remain unknown.

\section*{Acknowledgments}
The author would like to thank his PhD advisor Professor Josselin Garnier (École Polytechnique, Centre de Mathématiques Appliquées) for his guidance, Loic Le Gratiet (EDF R\&D, Chatou) and Anne Dutfoy (EDF R\&D, Saclay) for their advice and helpful suggestions.
The author acknowledges the support of the French Agence Nationale de la Recherche (ANR), under grant ANR-13-MONU-0005 (project CHORUS).

\pagebreak

\begin{appendices}

\section{Matérn kernels} \label{App:noyaux_Matérn}

In this work, we 
use the following convention for the Fourier transform: the Fourier transform $\widehat{g}$ of a smooth function $g : \R^r \rightarrow \R$ verifies $ g(\bs{x}) = \int_{\R^r} \widehat{g} (\bs{\omega}) e^{i \langle \bs{\omega} | \bs{x} \rangle} d \bs{\omega} $ and $ \widehat{g} (\bs{\omega}) = (2 \pi)^{-r} \int_{\R^r} g(\bs{x}) e^{-i \langle \bs{\omega} | \bs{x} \rangle} d \bs{x} $. 

Let us set up a few notations.

\begin{enumerate}[(a)]
\item $\mathcal{K}_\nu$ is the modified Bessel function of second kind with parameter $\nu$ ;
\item $K_{r,\nu}$ is the $r$-dimensional Matérn isotropic covariance kernel with variance 1, correlation length 1 and smoothness $\nu \in (0,+\infty)$ and $\widehat{K}_{r,\nu}$ is its Fourier transform:

\begin{enumerate}[(i)]
\item $\forall \bs{x} \in \R^r$,

\begin{equation} \label{Eq:Matern_anis_geom}
K_{r,\nu}(\bs{x}) = \frac{1}{\Gamma(\nu) 2^{\nu - 1} } \left( 2 \sqrt{\nu} \|\bs{x}\| \right) ^{\nu} \mathcal{K}_\nu \left( 2 \sqrt{\nu} \|\bs{x}\| \right) \; ;
\end{equation}
\item 
$
\forall \bs{\omega} \in \R^r,
$
\begin{equation}
\widehat{K}_{r,\nu} (\bs{\omega}) 
= \frac{M_r(\nu)}{( \|\bs{\omega}\|^2 + 4 \nu  )^{  \nu + \frac{r}{2}  }}
\text{ with }
M_r(\nu) = \frac{\Gamma ( \nu + \frac{r}{2} ) (2 \sqrt{\nu} )^{2 \nu } }{ \pi^{\frac{r}{2} } \Gamma(\nu) }.
\end{equation}

\end{enumerate}

\item $K_{r,\nu}^{tens}$ is the $r$-dimensional Matérn tensorized covariance kernel with variance 1, correlation length 1 and smoothness $\nu \in \R_+$ and $\widehat{K}_{r,\nu}^{tens}$ is its Fourier transform:

\begin{enumerate}[(i)]
\item $\forall \bs{x} \in \R^r$,

\begin{equation} \label{Eq:Matern_tens}
K_{r,\nu}^{tens}(\bs{x}) = \prod_{j=1}^r K_{1,\nu}(\bs{x}_j) \; ;
\end{equation}
\item $\forall \bs{\omega} \in \R^r$,

\begin{equation}
\widehat{K}_{r,\nu}^{tens} (\bs{\omega}) =  \prod_{j=1}^r \widehat{K_{1,\nu}}(\bs{\omega}_j).
\end{equation}
\end{enumerate}

\item let us adopt the following convention: if $\bs{t} \in \R^r$, 
  $ \frac{\bs{t}}{\bs{\theta}} = \left( \frac{t_1}{\theta_1},...,\frac{t_r}{\theta_r} \right)$ and 
  $ \bs{t} \, \bs{\mu} = \left( t_1 \mu_1,..., t_r \mu_r \right)$. \medskip

\end{enumerate}

We define the Matérn geometric anisotropic covariance kernel with variance parameter $\sigma^2$, correlation lengths $\bs{\theta}$ (resp. inverse correlation lengths $\bs{\mu}$) and smoothness $\nu$ as the function $\bs{x} \mapsto \sigma^2 K_{r,\nu}\left(\frac{\bs{x}}{\bs{\theta}}\right)$ (resp. $\bs{x} \mapsto \sigma^2 K_{r,\nu}\left(\bs{x}\bs{\mu}\right)$).

Similarly, we define the Matérn tensorized covariance kernel with variance parameter $\sigma^2$, correlation lengths $\bs{\theta}$ (resp. inverse correlation lengths $\bs{\mu}$) and smoothness $\nu$ as the function $\bs{x} \mapsto \sigma^2 K_{r,\nu}^{tens}\left(\frac{\bs{x}}{\bs{\theta}}\right)$ (resp. $\bs{x} \mapsto \sigma^2 K_{r,\nu}^{tens} \left(\bs{x}\bs{\mu}\right)$). \medskip

\section{Proofs of section \ref{Sec:prior_gibbs_univ}} \label{Sec:prior_gibbs_univ:App}

\begin{proof}[Proof of Lemma  \ref{Lem:variance_matrix}]

\medskip

As $ \bs{M} $ is a symmetric matrix, the spectral theorem guarantees the existence of a diagonal matrix $ \bs{ \Lambda } $ and an orthogonal matrix $ \bs{O} $ such that $ \bs{M} = \bs{O} \trans \bs{ \Lambda } \bs{O} $, with the diagonal coefficients of $ \bs{ \Lambda } $ being the eigenvalues of $ \bs{M} $. Setting $ \bs{U}_0 := \bs{O} \bs{U} $, we can now compute $ \V [ \bs{U}_0 \trans \bs{ \Lambda } \bs{U}_0 ] = \V [ \bs{U} \trans \bs{M} \bs{U} ] $, $ \bs{U}_0 $ following the uniform distribution on $S^{n-1}$. \medskip

Let $ ( \lambda_i )_{ 1 \leqslant i \leqslant  n } $ be the eigenvalues of $ \bs{M} $. \medskip
We can write $ \V [ \bs{U}_0 \trans \bs{ \Lambda } \bs{U}_0 ] = \V [ \sum_{ 1 \leqslant i \leqslant  n } \lambda_i X_i ] $, where $ X_i $ ($ 1 \leqslant i \leqslant  n $) are nonnegative identically distributed random variables such that  $ \sum_{ 1 \leqslant i \leqslant  n } X_i = 1 $.

\begin{equation}
\label{VarianceSomme}
\begin{split}
\V \left[ \sum_{ i=1 }^{ n} \lambda_i X_i \right] 
&= \V [ X_1 ] \sum_{i=1}^{ n } \lambda_i^2 + 2 \Cov(X_1,X_2) \sum_{ 1 \leqslant i < j \leqslant n } \lambda_i \lambda_j.
\end{split}
\end{equation}

Obviously, $ \E [ X_1 ] = \frac{1}{ n } $ and thus 
$
\Cov( X_1 , X_2 ) 
= -1/ (n - 1) \V [ X_1 ].
$

\begin{equation} \label{VarianceCalcul}
\begin{split}
\frac{ \V \left[ \sum_{ i=1 }^{ n } \lambda_i X_i \right] }{ \V [ X_1 ] }
&= \sum_{i=1}^{ n } \lambda_i^2 - \frac{1}{ n - 1 } \sum_{ i = 1 }^n \lambda_i \sum_{ j \neq i } \lambda_j \\
&= \left( 1 + \frac{1}{ n - 1 } \right) \left( \Tr \left[ \bs{M}^2 \right] - \frac{1}{ n } \Tr \left[ \bs{M} \right]^2 \right). \\
\end{split}
\end{equation}

\end{proof}

\section{Proofs of the existence of the Gibbs reference posterior} \label{App:Gibbs_posterior}

The proof of the existence and uniqueness of the Gibbs reference posterior that was used in \citet{Mur18} to deal with the Simple Kriging setting is inadequate in the Universal Kriging setting because the projection $\bs{W} \trans$ may make key facts used in \citet{Mur18}  untrue. In the following, we provide replacements for the parts of the proof in \citet{Mur18} that are invalid in the Universal Kriging setting.

The proof contained two parts, one dealing with ``low correlations'', that is $\norme \to +\infty$ and one with ``high correlation'', that is $\norme \to 0$. \medskip

\subsection{Accounting for low correlation : \texorpdfstring{$\norme \to \infty$}{Lg}}

Concerning the part about $\norme \to +\infty$, we need to make sure that Corollary 33 of \citet{Mur18} remains true.

Define the functions $h_i$ by 
\begin{equation}
h_i( \mu_i \; | \; \manqueI ) :=  \sqrt{\Tr \left[ \left( \deIrivee \right)^2 \right] } = \left\| \deIrivee \right\|.
\end{equation}

The conclusion of Corollary 33 of \citet{Mur18} is that that there exist $S>0$ and $0<a<b$ such that, whenever $\norme \geqslant S$, 

\begin{equation} \label{Eq:encadrement_densite_prior_cond}
 a \; h_i( \mu_i \; | \; \manqueI ) \leqslant
f_i( \mu_i \; | \; \manqueI ) \leqslant
b \; h_i( \mu_i \; | \; \manqueI ).
\end{equation}

We need to find conditions under which this is true.
While the right inequality is obvious, the left inequality is harder to show. \medskip

Fix $\bs{\alpha} = \bs{\mu} / \norme_\infty $. Then define $\bs{L}_{i,\bs{\alpha}} = \lim \limits_{\norme \to \infty} \deIrivee / \left\| \deIrivee\right\|_\infty$. \medskip

We now give an explicit form for $\bs{L}_{i,\bs{\alpha}}$. Let $\bs{X}$ be the $n \times r$ matrix representing the design set, and let $\bs{X_\alpha}$ be the matrix $ \bs{X} Diag(\bs{\alpha})$, where $Diag(\bs{\alpha})$ is the $r \times r$ diagonal matrix whose diagonal is the vector $\bs{\alpha}$.

\begin{prop} \label{Prop:decrit_Lialpha_anisgeom}
If the Matérn kernel is anisotropic geometric, then $L_{i,\bs{\alpha}}$ is the symmetric $n \times n$ matrix with null diagonal whose nondiagonal coefficients are given by the following rule : its $(a,b)$ coefficient ($a,b \in [\![1,n]\!]$ and $a \neq b$) is $-1$ if the $a$-th and $b$-th point in the design set $\bs{X_\alpha}$ achieve minimal Euclidean distance within this design set, and $0$ otherwise.
\end{prop}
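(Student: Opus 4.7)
The plan is to compute each entry of $\deIrivee$ explicitly via the chain rule, substitute the scaling $\bs{\mu} = \norme_\infty \bs{\alpha}$, and then use the exponential large-argument decay of the Matérn profile to isolate the dominant contributions. Writing $K_{r,\nu}(\bs{y}) = g(\|\bs{y}\|)$ with $g$ the radial profile from (\ref{Eq:Matern_anis_geom}), the $(a,b)$-entry of $\coIrr$ equals $g(\|\bs{x}_{ab}\bs{\mu}\|)$ where $\bs{x}_{ab}$ is the difference of the $a$-th and $b$-th rows of $\bs{X}$. The chain rule yields
\[
\left(\deIrivee\right)_{ab} = g'\bigl(\|\bs{x}_{ab}\bs{\mu}\|\bigr)\,\frac{x_{ab,i}^2\,\mu_i}{\|\bs{x}_{ab}\bs{\mu}\|}.
\]
Substituting $\bs{\mu} = \norme_\infty \bs{\alpha}$ and setting $d_{ab} := \|\bs{x}_{ab}\bs{\alpha}\|$ (the Euclidean distance between the $a$-th and $b$-th points of $\bs{X}_{\bs{\alpha}}$), this becomes $g'(\norme_\infty d_{ab})\,x_{ab,i}^2\,\alpha_i/d_{ab}$. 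On the diagonal, the entry is identically zero, since $g(0)$ is constant in $\bs{\mu}$.

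Next, I would invoke the classical large-argument expansion $\mathcal{K}_\nu(t)\sim\sqrt{\pi/(2t)}\,e^{-t}$ together with the identity $(u^\nu\mathcal{K}_\nu(u))'=-u^\nu\mathcal{K}_{\nu-1}(u)$ to obtain $g'(t) \sim -C_\nu\,t^{\nu-1/2}\,e^{-2\sqrt{\nu}\,t}$ as $t\to\infty$ for an explicit $C_\nu>0$. Each entry therefore decays at exponential rate $e^{-2\sqrt{\nu}\,\norme_\infty\,d_{ab}}$ with a subexponential prefactor depending on $d_{ab}$, $x_{ab,i}$ and $\alpha_i$. Hence the infinity-norm $\|\deIrivee\|_\infty$ is governed, up to subexponential factors, by the minimum $d_{\min}$ of $d_{ab}$ over off-diagonal pairs with $x_{ab,i}\neq 0$, and entries with either $d_{ab}>d_{\min}$ or $x_{ab,i}=0$ vanish after normalization by $\|\deIrivee\|_\infty$.

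For pairs attaining $d_{ab}=d_{\min}$ with $x_{ab,i}\neq 0$, all entries share the same exponential order and the same sign, which is negative since $g'<0$ and $x_{ab,i}^2\,\alpha_i/d_{\min}>0$. When the minimum is attained by a single pair (the generic situation for $Q(r,n)$-almost all design sets), the normalization by $\|\deIrivee\|_\infty$ automatically sends the dominant entry and its transpose to $-1$ and every other entry to $0$, exactly as claimed. I expect the main technical care to lie in making the exponential-dominance argument uniform across all $\binom{n}{2}$ pairs, so that the ratio of any subdominant entry to the dominant one is shown to behave as $\norme_\infty^{O(1)}\,e^{-2\sqrt{\nu}\,\norme_\infty(d_{ab}-d_{\min})}$; and secondarily in handling the degenerate configurations where multiple pairs attain $d_{\min}$ with differing polynomial prefactors $x_{ab,i}^2/d_{ab}$, which must either be excluded by a genericity argument on $\bs{\alpha}$ and the design set or treated by a suitable refinement of the rule defining $\bs{L}_{i,\bs{\alpha}}$.
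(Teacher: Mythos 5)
Your proposal follows essentially the same route as the paper's proof: the chain-rule identity $\frac{\partial}{\partial \mu_i} K_{r,\nu}(\bs{\mu x}) = \mu_i x_i^2 \| \bs{\mu x} \|^{-1} K_{1,\nu}'(\|\bs{\mu x}\|)$, the large-argument Bessel asymptotics yielding $K_{1,\nu}'(t) \sim -2\sqrt{\nu}\,K_{1,\nu}(t)$ with the dominant factor $\exp(-2\sqrt{\nu}\|\bs{\mu x}\|)$, and exponential dominance of the minimal-Euclidean-distance pairs of $\bs{X_\alpha}$ after normalization by the sup norm. The two difficulties you flag (uniformity of the dominance over all pairs, and ties at $d_{\min}$ with unequal prefactors $x_{ab,i}^2$) are left equally implicit in the paper's own brief conclusion, so your treatment is, if anything, slightly more careful.
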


\begin{proof}
We only prove the result when $\nu>1$, but the proof is very similar in the case where $0<\nu\leqslant1$.

\citet{AS64} (formula 9.7.2.) yields that an equivalent for the one-dimensional Matérn kernel when $t \to +\infty$:
\begin{equation}
K_{1,\nu}(t) \sim \frac{\sqrt{\pi/2}}{\Gamma(\nu)2^{\nu-1}} (2\sqrt{\nu}t)^{\nu-1/2} \exp(-2\sqrt{\nu}t)
\end{equation}
From \citet{AS64} (formula 9.6.28.), we obtain that:
\begin{equation}
K_{1,\nu}'(t) = -\frac{2\nu t}{\nu-1} K_{1,\nu-1} \left(\sqrt{\frac{\nu}{\nu-1}} t \right)
\sim -2\sqrt{\nu} \frac{\sqrt{\pi/2}}{\Gamma(\nu)2^{\nu-1}} (2\sqrt{\nu}t)^{\nu-1/2} \exp(-2\sqrt{\nu}t)
\sim -2\sqrt{\nu} K_{1,\nu}(t)
\end{equation}

The result follows after recalling that $\frac{\partial}{\partial \mu_i} K_{r,\nu}(\bs{\mu x}) = \mu_i x_i^2 \| \bs{\mu x} \|^{-1} K_{1,\nu}'(\|\bs{\mu x}\|) $. When $\norme \to \infty$,
\begin{equation}
\frac{\partial}{\partial \mu_i} K_{r,\nu} (\bs{\mu x})
\sim -2\sqrt{\nu} \mu_i x_i^2 \| \bs{\mu x} \|^{-1} \frac{\sqrt{\pi/2}}{\Gamma(\nu)2^{\nu-1}} (2\sqrt{\nu} \| \bs{\mu x} \|)^{\nu-1/2} \exp(-2\sqrt{\nu} \| \bs{\mu x} \|).
\end{equation}

In the case where $0<\nu\leqslant1$, $\frac{\partial}{\partial \mu_i} K_{r,\nu}(\bs{\mu x})$ also has an equivalent when $\norme \to \infty$ whose prominent factor is $\exp(-2\sqrt{\nu} \| \bs{\mu x} \|)$, so the end result is the same.
\end{proof}

\begin{prop} \label{Prop:decrit_Lialpha_tens}
If the Matérn kernel is tensorized with smoothness $\nu>1$, then $L_{i,\bs{\alpha}}$ is the matrix with nonpositive coefficients such that $\left\| L_{i,\bs{\alpha}} \right\|_\infty = 1$ which is proportional to the symmetric matrix described hereafter : it has null diagonal and its nondiagonal coefficients are given by the following rule : its $(a,b)$ coefficient ($a,b \in [\![1,n]\!]$ and $a \neq b$) is $0$ if the $a$-th and $b$-th point in the design set $\bs{X_\alpha}$ do not achieve minimal 1-distance within this design set, and $ \alpha_i^{\nu - 1/2} \left| x_i^{(a)} - x_i^{(b)} \right|^{\nu + 1/2} \prod_{j \neq i} \alpha_j^{\nu - 1/2} \left|x_j^{(a)} - x_j^{(b)} \right|^{\nu - 1/2} $ if they do.
\end{prop}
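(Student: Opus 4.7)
The plan is to exploit the tensor-product factorization of the kernel to reduce the computation to the one-dimensional asymptotics of $K_{1,\nu}$ and $K_{1,\nu}'$ already established in the proof of Proposition \ref{Prop:decrit_Lialpha_anisgeom}, and then to isolate the dominant matrix entries along the ray $\bs{\mu}=\|\bs{\mu}\|_\infty\bs{\alpha}$.

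First, using the tensor structure of $K_{r,\nu}^{tens}$, write $(\coIrr)_{a,b}=\prod_{j=1}^{r} K_{1,\nu}(\mu_j\Delta_j^{(a,b)})$ with $\Delta_j^{(a,b)}:=x_j^{(a)}-x_j^{(b)}$. Differentiation gives
\begin{equation*}
\left(\deIrivee\right)_{a,b} \;=\; \Delta_i^{(a,b)}\,K_{1,\nu}'\!\bigl(\mu_i\Delta_i^{(a,b)}\bigr)\prod_{j\neq i} K_{1,\nu}\!\bigl(\mu_j\Delta_j^{(a,b)}\bigr).
\end{equation*}
Since $\nu>1$, $K_{1,\nu}$ is even and $C^1$ with $K_{1,\nu}'(0)=0$, so the diagonal vanishes identically and symmetry is inherited from $\coIrr$. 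Next, I would substitute the two asymptotics derived in the proof of Proposition \ref{Prop:decrit_Lialpha_anisgeom}, namely $K_{1,\nu}(t)\sim c(2\sqrt{\nu}|t|)^{\nu-1/2}\exp(-2\sqrt{\nu}|t|)$ and, using the oddness of $K_{1,\nu}'$, $t\,K_{1,\nu}'(t)\sim -2\sqrt{\nu}|t|\,K_{1,\nu}(|t|)$ as $|t|\to\infty$. Setting $\mu_j=\|\bs{\mu}\|_\infty\alpha_j$ and collecting polynomial and exponential factors then yields
\begin{equation*}
\left(\deIrivee\right)_{a,b} \;\sim\; -C\,\|\bs{\mu}\|_\infty^{r(\nu-1/2)}\,g_i(a,b;\bs{\alpha})\,\exp\!\bigl(-2\sqrt{\nu}\,\|\bs{\mu}\|_\infty\,d_{\bs{\alpha}}(a,b)\bigr),
\end{equation*}
where $C>0$ does not depend on $(a,b)$, $d_{\bs{\alpha}}(a,b):=\sum_{j=1}^{r}\alpha_j|\Delta_j^{(a,b)}|$ is the $\ell^1$-distance between the $a$-th and $b$-th rows of $\bs{X_\alpha}$, and
\begin{equation*}
g_i(a,b;\bs{\alpha}) \;:=\; \alpha_i^{\nu-1/2}\,\bigl|\Delta_i^{(a,b)}\bigr|^{\nu+1/2}\prod_{j\neq i}\alpha_j^{\nu-1/2}\,\bigl|\Delta_j^{(a,b)}\bigr|^{\nu-1/2}.
\end{equation*}

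Finally, with $d^{\star}:=\min_{a\neq b}d_{\bs{\alpha}}(a,b)$, entries with $d_{\bs{\alpha}}(a,b)>d^{\star}$ are exponentially negligible compared to those attaining $d^{\star}$, so after dividing by $\|\deIrivee\|_\infty$ the common factor $C\,\|\bs{\mu}\|_\infty^{r(\nu-1/2)}\exp(-2\sqrt{\nu}\|\bs{\mu}\|_\infty d^{\star})$ cancels: non-minimizing entries vanish in the limit, while each minimizing entry tends to $-g_i(a,b;\bs{\alpha})$ rescaled by the maximum of $g_i$ over minimizing pairs. The resulting $L_{i,\bs{\alpha}}$ is symmetric, nonpositive, has $\|L_{i,\bs{\alpha}}\|_\infty=1$, and is proportional to the matrix described in the statement. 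The principal subtlety is handling ties in $d_{\bs{\alpha}}$: when several pairs attain $d^{\star}$, all of them contribute to the limit with weights $g_i(a,b;\bs{\alpha})$; a minimizing pair with $\Delta_i^{(a,b)}=0$ automatically receives weight zero thanks to the factor $|\Delta_i^{(a,b)}|^{\nu+1/2}$, in agreement with the formula. One must also check that at least one minimizing pair has $\Delta_i^{(a,b)}\neq 0$ so that the normalization by the maximum absolute entry is well-defined, which is a generic condition on $\bs{\alpha}$.
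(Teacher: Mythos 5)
Your proposal is correct and follows essentially the same route as the paper, which simply states that the proof is similar to that of Proposition \ref{Prop:decrit_Lialpha_anisgeom}: you factor the tensorized kernel, insert the one-dimensional asymptotics of $K_{1,\nu}$ and $K_{1,\nu}'$, and identify the dominant exponential as the $\ell^1$-distance in $\bs{X_{\alpha}}$, recovering exactly the stated prefactor. Your remarks on ties and on the genericity needed for the normalization to be well defined are consistent with the paper's standing assumption of a coordinate-distinct ($Q(r,n)$-almost sure) design set.
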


\begin{rmq}
If the Matérn kernel is tensorized with smoothness $0<\nu \leqslant 1$, then the same rule applies but with different formula when minimal 1-distance is achieved.
\end{rmq}

\begin{proof}
The proof is similar to that of Proposition \ref{Prop:decrit_Lialpha_anisgeom}.
\end{proof}

\begin{cor} \label{Cor:rang_Lialpha}
For Matérn anisotropic geometric and tensorized kernels, if the design set $\bs{X}$ is randomly chosen according to the Uniform probability distribution on $(0,1)^{rn}$, then almost surely, whatever $i \in [\![1,n]\!]$ and $\bs{\alpha}$ in $\R^r$ such that $\| \bs{\alpha} \|_\infty = 1$, $L_{i,\bs{\alpha}}$ has rank lower or equal to $2r$.
\end{cor}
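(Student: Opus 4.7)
Plan for the proof. Combining the structural descriptions of Propositions \ref{Prop:decrit_Lialpha_anisgeom} and \ref{Prop:decrit_Lialpha_tens}, the matrix $L_{i,\bs{\alpha}}$ is $n\times n$, symmetric, with zero diagonal, and its off-diagonal entries are nonzero only at positions $(a,b)$ such that the points indexed $a$ and $b$ realize the minimum pairwise distance (Euclidean in the anisotropic geometric case, $\ell^1$ in the tensorized case) inside the rescaled design $\bs{X}_{\bs{\alpha}}$. Let $E(\bs{X},\bs{\alpha})$ denote this set of minimum-distance pairs and $V(\bs{X},\bs{\alpha}):=\bigcup_{(a,b)\in E(\bs{X},\bs{\alpha})}\{a,b\}$. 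Then $L_{i,\bs{\alpha}}$ is supported on the principal submatrix indexed by $V(\bs{X},\bs{\alpha})$, so
\[
\mathrm{rank}(L_{i,\bs{\alpha}}) \leqslant |V(\bs{X},\bs{\alpha})| \leqslant 2\,|E(\bs{X},\bs{\alpha})|,
\]
and it is enough to show that, for $Q(r,n)$-almost every design $\bs{X}$ and every admissible $\bs{\alpha}$, one has $|E(\bs{X},\bs{\alpha})|\leqslant r$.

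In both kernel cases, the relevant distance between points $a,b$ of $\bs{X}_{\bs{\alpha}}$ can be written as a linear form $\bs{c}_{ab}(\bs{X})\cdot \bs{\tau}$ on an $r$-dimensional auxiliary parameter $\bs{\tau}$: one takes $\bs{\tau}=(\alpha_l^2)_{l=1}^r$ and $c_{ab,l}=(x_l^{(a)}-x_l^{(b)})^2$ in the anisotropic geometric case (after squaring the Euclidean distance), and $\bs{\tau}=\bs{\alpha}$, $c_{ab,l}=|x_l^{(a)}-x_l^{(b)}|$ in the tensorized case. Consequently, if $r+1$ distinct pairs $(a_1,b_1),\ldots,(a_{r+1},b_{r+1})$ were to tie for minimum distance, the $r$ equalities
\[
\bigl(\bs{c}_{a_1 b_1}(\bs{X}) - \bs{c}_{a_j b_j}(\bs{X})\bigr)\cdot\bs{\tau}=0,\qquad j=2,\ldots,r+1,
\]
would admit the nonzero solution $\bs{\tau}$ given by $\bs{\alpha}$, forcing the determinant $P_S(\bs{X})$ of the corresponding $r\times r$ system to vanish.

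For each combinatorial choice $S=\{(a_j,b_j)\}_{j=1}^{r+1}$, the function $P_S$ is polynomial in the entries of $\bs{X}$ in the anisotropic geometric case, and piecewise polynomial on the finitely many sign cells of the differences $x_l^{(a)}-x_l^{(b)}$ in the tensorized case. I would establish that $P_S$ does not vanish identically on $(0,1)^{rn}$ (respectively on at least one sign cell) by exhibiting a witness design $\bs{X}^{\ast}$: choose coordinates so that, for $j=2,\ldots,r+1$, the pair $(a_j,b_j)$ differs from $(a_1,b_1)$ predominantly along a distinct coordinate direction, making the $r$ difference vectors linearly independent in $\R^r$. Applying this to each of the finitely many $S$, the set of design sets for which some $S$ leads to a tie has Lebesgue measure zero, so for $Q(r,n)$-almost every $\bs{X}$ no $r+1$ pairs can share the minimum distance, uniformly over $i$ and over $\bs{\alpha}$ with $\|\bs{\alpha}\|_\infty=1$.

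The chief obstacle is the construction of the witness designs together with a clean handling of the sign cells in the tensorized case; both reduce to elementary linear-algebraic verifications thanks to the transparent dependence of $\bs{c}_{ab}$ on the coordinates of $\bs{X}$, so that the combinatorial bookkeeping over all choices of $S$, while finite, is the only delicate point.
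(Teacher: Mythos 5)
Your proposal is correct and follows essentially the same route as the paper: bound the rank of $L_{i,\bs{\alpha}}$ by twice the number of minimum-distance pairs in $\bs{X_\alpha}$, and show that almost surely no $r+1$ pairs can tie, uniformly in $\bs{\alpha}$. The paper simply asserts this genericity fact in one sentence, whereas you substantiate it with the linear-form/determinant argument over the finitely many combinatorial (and, in the tensorized case, sign-cell) configurations — a welcome filling-in of the detail the paper leaves implicit.
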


\begin{proof}
Almost surely, whatever $\bs{\alpha}$ in $\R^r$ such that $\| \bs{\alpha} \|_\infty = 1$, the design set $\bs{X_\alpha}$ has at most $r$ couples of distinct points achieving equal distance (whether that distance be the 1- or 2-distance). \textit{A fortiori}, it has at most $r$ couples of distinct points achieving minimal distance.
\end{proof}

With fixed $\bs{\alpha}$, as $\norme \to \infty$, we have

\begin{equation}
f_i( \mu_i \; | \; \manqueI ) \sim 
\left\| \deIrivee\right\|_\infty
\sqrt{ \Tr \left[ \left( \bs{W} \trans \bs{L}_{i,\bs{\alpha}} \bs{W} \right)^2 \right]
- \frac{1}{n-p} \Tr \left[  \bs{W} \trans \bs{L}_{i,\bs{\alpha}} \bs{W} \right]^2 
}
\end{equation}

We may recognize the factor under the square root as the variance (multiplied by $n-p$) of the eigenvalues (accounting for multiplicity) of the matrix $\bs{W} \trans \bs{L}_{i,\bs{\alpha}} \bs{W}$. If the premise of Corollary \ref{Cor:rang_Lialpha} holds, and if $2r < n-p$, then it is null if and only if $\bs{W} \trans \bs{L}_{i,\bs{\alpha}} \bs{W}$ is the null matrix. Assumption \ref{Hyp:moyenne_non_zero} is designed to prevent this from happening.

\begin{prop}
Assume $2r<n-p$. For Matérn anisotropic geometric or tensorized correlation kernels, if the design set $\bs{X}$ is randomly chosen according to the Uniform probability distribution on $(0,1)^{rn}$,
then almost surely,
Assumption \ref{Hyp:moyenne_non_zero} implies that
\begin{equation}
\min_{i \in [\![1,n]\!], \| \bs{\alpha} \|_\infty=1} \sqrt{ \Tr \left[ \left( \bs{W} \trans \bs{L}_{i,\bs{\alpha}} \bs{W} \right)^2 \right]
- \frac{1}{n-p} \Tr \left[  \bs{W} \trans \bs{L}_{i,\bs{\alpha}} \bs{W} \right]^2 
} > 0.
\end{equation}
\end{prop}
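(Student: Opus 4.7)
The plan is to reduce the pointwise positivity of the quantity under the square root to an algebraic statement about $\bs{W}\trans \bs{L}_{i,\bs{\alpha}} \bs{W}$, then leverage the extreme sparsity of $\bs{L}_{i,\bs{\alpha}}$ together with Assumption~\ref{Hyp:moyenne_non_zero}, and finally upgrade pointwise positivity to a uniform lower bound by compactness.

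First, the quantity under the square root equals $(n-p)$ times the variance of the eigenvalues of the symmetric $(n-p)\times(n-p)$ matrix $\bs{W}\trans \bs{L}_{i,\bs{\alpha}} \bs{W}$, so it vanishes exactly when all eigenvalues agree. Since Corollary~\ref{Cor:rang_Lialpha} gives $\mathrm{rank}(\bs{W}\trans \bs{L}_{i,\bs{\alpha}} \bs{W}) \leq \mathrm{rank}(\bs{L}_{i,\bs{\alpha}}) \leq 2r$, the hypothesis $2r < n - p$ forces at least one zero eigenvalue, so the only way the variance can vanish is if $\bs{W}\trans \bs{L}_{i,\bs{\alpha}} \bs{W} = \bs{0}$. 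It therefore suffices to rule this out at every $(i,\bs{\alpha})$.

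Suppose it fails. Using $\bs{PP}\trans + \bs{WW}\trans = \bs{I}_n$, one has $\mathrm{Im}(\bs{W}) = \mathrm{Im}(\bs{H})^\perp$, and the vanishing of the bilinear form of $\bs{L}_{i,\bs{\alpha}}$ on $\mathrm{Im}(\bs{W})$ translates into the inclusion $\bs{L}_{i,\bs{\alpha}}(\mathrm{Im}(\bs{W})) \subset \mathrm{Im}(\bs{H})$. Propositions~\ref{Prop:decrit_Lialpha_anisgeom} and~\ref{Prop:decrit_Lialpha_tens} confine the nonzero entries of $\bs{L}_{i,\bs{\alpha}}$ to rows and columns indexed by the set $S$ of at most $2r$ design-point indices participating in some minimum-distance pair of $\bs{X}_{\bs{\alpha}}$, so every vector of $\mathrm{Im}(\bs{L}_{i,\bs{\alpha}})$ is supported in $S$. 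Consequently $\bs{L}_{i,\bs{\alpha}}(\mathrm{Im}(\bs{W}))$ lies in $\mathrm{Im}(\bs{H}) \cap \mathrm{span}\{e_a : a \in S\}$, which by Assumption~\ref{Hyp:moyenne_non_zero} is reduced to $\{\bs{0}\}$ because $|S| \leq 2r$. Hence $\bs{L}_{i,\bs{\alpha}} \bs{W} = \bs{0}$, so $\mathrm{Im}(\bs{W}) \subset \ker(\bs{L}_{i,\bs{\alpha}})$, and symmetry of $\bs{L}_{i,\bs{\alpha}}$ yields $\mathrm{Im}(\bs{L}_{i,\bs{\alpha}}) \subset \mathrm{Im}(\bs{H})$. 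A second application of Assumption~\ref{Hyp:moyenne_non_zero} (the image is still supported in $S$) gives $\mathrm{Im}(\bs{L}_{i,\bs{\alpha}}) = \{\bs{0}\}$, contradicting the almost-sure existence of at least one minimum-distance pair in $\bs{X}_{\bs{\alpha}}$.

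To promote pointwise positivity to a strictly positive minimum over the compact set $[\![1,r]\!] \times \{\bs{\alpha} : \|\bs{\alpha}\|_\infty = 1\}$, I would stratify the unit $\|\cdot\|_\infty$-sphere by the finite combinatorial pattern of minimum-distance pairs of $\bs{X}_{\bs{\alpha}}$. In the anisotropic geometric case $\bs{L}_{i,\bs{\alpha}}$ is constant on each stratum, so the minimum is the minimum of finitely many strictly positive numbers. In the tensorized case $\bs{L}_{i,\bs{\alpha}}$ depends continuously on $\bs{\alpha}$ within each stratum, and since the pattern can only enlarge on the closure of a stratum, the same support argument still applies there; compactness on each closure followed by taking the minimum over finitely many strata delivers the bound. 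The main obstacle is the second step: tracking the joint constraint ``vector of $\mathrm{Im}(\bs{H})$ supported in $S$'' through the symmetry of $\bs{L}_{i,\bs{\alpha}}$. The deliberate alignment between the cardinality bound $|S| \leq 2r$ and the ``more than $2r$ nonzero entries'' threshold of Assumption~\ref{Hyp:moyenne_non_zero} is what makes the argument succeed, and it must be invoked twice — once to annihilate $\bs{L}_{i,\bs{\alpha}} \bs{W}$, and once to annihilate $\mathrm{Im}(\bs{L}_{i,\bs{\alpha}})$.
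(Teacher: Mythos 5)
Your proof is correct and follows essentially the same route as the paper's: reduce positivity of the eigenvalue variance to non-nullity of $\bs{W}\trans\bs{L}_{i,\bs{\alpha}}\bs{W}$ via the rank bound $2r<n-p$, then apply Assumption \ref{Hyp:moyenne_non_zero} twice against the fact that $\mathrm{Im}(\bs{L}_{i,\bs{\alpha}})$ is supported on at most $2r$ coordinates to force $\bs{L}_{i,\bs{\alpha}}=\bs{0}$, a contradiction. Your final uniformity step is actually more careful than the paper's, which simply asserts that the set of possible matrices $\bs{L}_{i,\bs{\alpha}}$ is almost surely finite --- literally true for the anisotropic geometric kernel but not for the tensorized one, where $\bs{L}_{i,\bs{\alpha}}$ varies continuously with $\bs{\alpha}$ within each combinatorial stratum and your stratification-plus-compactness argument is the honest way to close the gap.
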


\begin{proof}
First, set $i \in [\![1,n]\!]$ and $\bs{\alpha}$ in $\R^r$ such that $\| \bs{\alpha} \|_\infty = 1$. We prove that $\bs{W} \trans \bs{L}_{i,\bs{\alpha}} \bs{W}$ is not the null matrix. \medskip

Assume that it is and that Assumption \ref{Hyp:moyenne_non_zero} holds. Assumption \ref{Hyp:moyenne_non_zero} implies that the intersection of the vector space spanned by $\bs{P}$ and the image of $\bs{L}_{i,\bs{\alpha}}$ is $\{\bs{0}_n\}$. Therefore, for any $\bs{z} \in \R^{n-p}$, if $\bs{L}_{i,\bs{\alpha}} \bs{W} z \neq \bs{0}_n$, then $\bs{W} \trans \bs{L}_{i,\bs{\alpha}} \bs{W} z \neq \bs{0}_{n-p}$, which contradicts the assumption that $\bs{W} \trans \bs{L}_{i,\bs{\alpha}} \bs{W}$ is the null matrix. So  $\bs{L}_{i,\bs{\alpha}} \bs{W}$ is the null $n \times (n-p)$ matrix, and thus the vector space spanned by $\bs{W}$ is included in the kernel of $\bs{L}_{i,\bs{\alpha}}$. This implies that $\bs{L}_{i,\bs{\alpha}} \bs{P} \bs{P} \trans = \bs{L}_{i,\bs{\alpha}}$, and then that $\bs{P} \bs{P} \trans \bs{L}_{i,\bs{\alpha}} = \bs{L}_{i,\bs{\alpha}}$. However, per Propositions \ref{Prop:decrit_Lialpha_anisgeom} and  \ref{Prop:decrit_Lialpha_tens}, all vectors in the image of $\bs{L}_{i,\bs{\alpha}}$ have at most $2r$ non-null elements when expressed in the canonical base of $\R^n$, so Assumption \ref{Hyp:moyenne_non_zero} implies that $\bs{P} \trans \bs{L}_{i,\bs{\alpha}}$ is the null $p \times n$ matrix, and thus that $\bs{L}_{i,\bs{\alpha}}$ is the null $n \times n$ matrix, which is untrue. \medskip

So, under Assumption \ref{Hyp:moyenne_non_zero}, whatever $i \in [\![1,n]\!]$ and $\bs{\alpha}$ in $\R^r$ such that $\| \bs{\alpha} \|_\infty = 1$, $\bs{W} \trans \bs{L}_{i,\bs{\alpha}} \bs{W}$ is not the null matrix and thus has a non-null eigenvalue. Moreover, $2r<n-p$ implies, according to Corollary \ref{Cor:rang_Lialpha}, that it also almost surely has a null eigenvalue, so the standard deviation of its eigenvalues is positive. As the number of possible matrices  $\bs{L}_{i,\bs{\alpha}}$ (with $i \in [\![1,n]\!]$ and $\bs{\alpha}$ in $\R^r$ such that $\| \bs{\alpha} \|_\infty = 1$) is almost surely finite, this yields the result.
\end{proof}

\begin{cor}
Assume $2r<n-p$. For Matérn anisotropic geometric or tensorized correlation kernels, if the design set $\bs{X}$ is randomly chosen according to the Uniform probability distribution on $(0,1)^{rn}$,
then almost surely,
Assumption \ref{Hyp:moyenne_non_zero} implies that there exist $S>0$ and $0<a<b$ such that Equation (\ref{Eq:encadrement_densite_prior_cond}) holds.
\end{cor}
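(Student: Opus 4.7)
The plan is to combine the pointwise asymptotic analysis already set up in Propositions \ref{Prop:decrit_Lialpha_anisgeom} and \ref{Prop:decrit_Lialpha_tens} with the strict positivity guaranteed by the preceding proposition, and turn it into a uniform estimate via a compactness (really, a finiteness) argument on the direction $\bs{\alpha} = \bs{\mu}/\|\bs{\mu}\|_\infty$. The upper bound in (\ref{Eq:encadrement_densite_prior_cond}) is immediate from the author's remark (the projector $\bs{W}\trans$ is a contraction in the operator norm, and the subtracted trace term is nonnegative), so only the lower bound needs real work.

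The first step is to record that as $\norme \to \infty$, the off-diagonal entries of $\coIrr$ decay to zero, so $\bs{W}\trans \coIrr \bs{W} \to \bs{W}\trans \bs{W} = \bs{I}_{n-p}$ in operator norm. Consequently $(\bs{W}\trans \coIrr \bs{W})^{-1} \to \bs{I}_{n-p}$, uniformly in $\bs{\alpha}$. Combined with the pointwise equivalent $\deIrivee / \|\deIrivee\|_\infty \to \bs{L}_{i,\bs{\alpha}}$ established earlier, this yields
\begin{equation*}
f_i(\mu_i \mid \manqueI)
\;=\; \|\deIrivee\|_\infty \sqrt{\Tr\bigl[(\bs{W}\trans \bs{L}_{i,\bs{\alpha}} \bs{W})^2\bigr] - \tfrac{1}{n-p}\Tr\bigl[\bs{W}\trans \bs{L}_{i,\bs{\alpha}} \bs{W}\bigr]^2}\,(1+o(1)),
\end{equation*}
with an analogous (simpler) expansion $h_i(\mu_i \mid \manqueI) = \|\deIrivee\|_\infty \|\bs{L}_{i,\bs{\alpha}}\|\,(1+o(1))$. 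The key point is to argue that the $o(1)$ terms are uniform in $\bs{\alpha}$ on the sphere $\{\|\bs{\alpha}\|_\infty = 1\}$.

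Uniformity is where the finiteness argument enters. Propositions \ref{Prop:decrit_Lialpha_anisgeom} and \ref{Prop:decrit_Lialpha_tens} describe $\bs{L}_{i,\bs{\alpha}}$ entirely in terms of which pairs of points in $\bs{X}_{\bs{\alpha}}$ achieve minimal distance, and almost surely this combinatorial data partitions the unit sphere into finitely many relatively open cells on each of which $\bs{L}_{i,\bs{\alpha}}$ depends continuously (in the geometric case it is locally constant; in the tensorized case its nondiagonal entries are continuous functions of $\bs{\alpha}$). On each such cell the asymptotic equivalents become uniform by standard arguments (the remainder estimates from the Bessel asymptotics in Proposition \ref{Prop:decrit_Lialpha_anisgeom} hold uniformly once a positive lower bound on the minimal distance separation is fixed). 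Taking the infimum and supremum over the finitely many cells, and over $i \in [\![1,r]\!]$, then gives constants $0 < a_0 < b_0$ such that, for $\norme$ sufficiently large,
\begin{equation*}
a_0\,\|\bs{L}_{i,\bs{\alpha}}\| \;\leqslant\; \sqrt{\Tr\bigl[(\bs{W}\trans \bs{L}_{i,\bs{\alpha}} \bs{W})^2\bigr] - \tfrac{1}{n-p}\Tr\bigl[\bs{W}\trans \bs{L}_{i,\bs{\alpha}} \bs{W}\bigr]^2} \;\leqslant\; b_0\,\|\bs{L}_{i,\bs{\alpha}}\|,
\end{equation*}
the lower bound being precisely what the previous proposition supplies (strictly positive minimum), and the upper bound being trivial. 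Absorbing the $(1+o(1))$ factors yields the desired $S$, $a$, $b$.

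The main obstacle I expect is justifying the uniformity of the asymptotic expansions in $\bs{\alpha}$ across the boundaries between cells. On a cell where the ``argmin pair set'' is fixed, the remainder in the Bessel asymptotic is uniformly controlled by an exponential gap between the minimal and next-to-minimal distance; this gap degenerates near cell boundaries where two non-minimal distances become minimal. The clean way around this is to refine the partition so that on each refined cell one works with a fixed set of pairs that \emph{at least} achieve the minimum, and to absorb the other decaying contributions into the $o(1)$ term — the continuity of $\bs{L}_{i,\bs{\alpha}}$ in $\bs{\alpha}$ on the closure of each cell then makes the minimum of the relevant trace expression still strictly positive by the previous proposition, and a standard compactness argument on the closed sphere closes the gap.
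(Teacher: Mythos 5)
Your proposal follows essentially the same route as the paper, which leaves this corollary without an explicit proof and treats it as an immediate consequence of the displayed pointwise equivalent $f_i(\mu_i \mid \bs{\mu}_{-i}) \sim \|\partial_{\mu_i}\bs{\Sigma}_{\bs{\mu}}\|_\infty \sqrt{\Tr[(\bs{W}\trans\bs{L}_{i,\bs{\alpha}}\bs{W})^2] - (n-p)^{-1}\Tr[\bs{W}\trans\bs{L}_{i,\bs{\alpha}}\bs{W}]^2}$ together with the strictly positive minimum over the (almost surely finite) family of limit matrices established in the preceding proposition. The uniformity in $\bs{\alpha}$ that you flag as the main obstacle is a genuine point the paper glosses over entirely, and your proposed remedy (refining the partition near cell boundaries, or equivalently extending the positivity argument to the compact family of limit matrices supported on argmin pair sets) is the right way to close it, so your write-up is if anything more complete than the paper's.
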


\subsection{Accounting for high correlation : \texorpdfstring{$\norme \to 0$}{Lg}}

In the part of the proof in \citet{Mur18} concerning $\norme \to 0$, we used a the series expansion of $\coIrr$. This expansion may be heavily modified by premultiplication by $\bs{W} \trans$ and postmultiplication by $\bs{W}$. 

In the case where $\nu<1$, there is no material change unless the vector $\bs{1}$ belongs to the vector space spanned by $\bs{H}$. 

\begin{proof}[Proof of Proposition \ref{Prop:nu<1} ]
Because $\bs{1}$ does not belong to the vector space spanned by $\bs{H}$, $\bs{W} \trans \bs{11} \trans \bs{W}$ has rank 1 and so the proof of this result is the same as in the Simple Kriging case.
\end{proof}

If $\bs{1}$ \textit{does} belong to the vector space spanned by $\bs{H}$, further study would be needed to assess whether or not the above theorem still applies, essentially because we cannot count on $L(\bs{y}|\bs{\mu})$ vanishing as $\norme \to 0$. 

Let us now focus on the case where $\nu>1$. We reproduce key facts given by Lemma 27 and Proposition 42 of \citet{Mur18}:

\begin{lem}
For any Matérn anisotropic geometric or tensorized correlation kernel with smoothness parameter $\nu > 1$, if a coordinate-distinct design set is used, there exists $a>0$ such that when $\norme \to 0$:
\begin{enumerate}
\item $\left\| \deIrivee \right\| = O(\mu_i)$;
\item $\left\| \coIrr^{-1} \right\| = O(\norme^{-a})$.
\end{enumerate}
\end{lem}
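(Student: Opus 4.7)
The plan is to exploit the second-order Taylor expansion of $K_{1,\nu}$ at the origin. Since $\nu>1$, $K_{1,\nu}$ is twice continuously differentiable there, so
\[
K_{1,\nu}(t) = 1 - c_2 t^2 + o(t^2), \qquad K_{1,\nu}'(t) = -2 c_2 t + o(t),
\]
with $c_2>0$. Both multivariate kernels inherit this expansion: the anisotropic geometric kernel through $K_{r,\nu}(\bs{x}) = K_{1,\nu}(\|\bs{x}\|)$, and the tensorized kernel through $K_{r,\nu}^{tens}(\bs{\mu t}) = \prod_j K_{1,\nu}(\mu_j t_j)$.

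For claim~(1), I would compute $(\deIrivee)_{a,b}$ entrywise. Writing $\bs{d}^{(ab)} = \bs{x}^{(a)} - \bs{x}^{(b)}$, the chain rule gives in the anisotropic geometric case
\[
(\deIrivee)_{a,b} = \mu_i \bigl(d^{(ab)}_i\bigr)^2 \frac{K_{1,\nu}'(\|\bs{\mu}\bs{d}^{(ab)}\|)}{\|\bs{\mu}\bs{d}^{(ab)}\|},
\]
and the ratio $K_{1,\nu}'(s)/s$ tends to $K_{1,\nu}''(0) = -2c_2$ as $s\to 0$, so each entry is $O(\mu_i)$ uniformly in $(a,b)$ when $\norme \to 0$. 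The tensorized case yields the same conclusion since $(\deIrivee)_{a,b} = d^{(ab)}_i K_{1,\nu}'(\mu_i d^{(ab)}_i) \prod_{j\neq i} K_{1,\nu}(\mu_j d^{(ab)}_j) = O(\mu_i)$, the remaining product converging to $1$. Standard equivalence of matrix norms on $\R^{n\times n}$ then gives claim~(1).

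For claim~(2), the task is to bound $\lambda_{\min}(\coIrr)$ from below by a positive power of $\norme$, since $\coIrr \to \bs{11}\trans$ as $\norme \to 0$ and this limit is rank one. The idea is to write
\[
\coIrr = \bs{11}\trans + \bs{A}(\bs{\mu}) + \bs{R}(\bs{\mu}),
\]
where $\bs{A}(\bs{\mu})$ is the homogeneous quadratic correction (with generic entry $-c_2 \sum_j \mu_j^2 (d^{(ab)}_j)^2$ in both kernel families) and $\bs{R}(\bs{\mu})$ gathers higher-order contributions, among them the non-analytic $\|\bs{\mu d}\|^{2\nu}$ terms when $\nu$ is non-integer. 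Restricting to the $(n-1)$-dimensional orthogonal complement $\bs{1}^\perp$ kills the leading $\bs{11}\trans$ term, and one then seeks a polynomial lower bound for the quadratic form induced by $\bs{A}(\bs{\mu}) + \bs{R}(\bs{\mu})$ on $\bs{1}^\perp$. The matrix $\bs{A}(\bs{\mu})$ is essentially a squared Euclidean distance matrix for the rescaled design points $\bs{\mu x}^{(k)}$, and the coordinate-distinct hypothesis guarantees that in the direction corresponding to the largest $\mu_j$ some pairwise coordinate differences are nonzero, yielding at least one eigenvalue of $\coIrr$ on $\bs{1}^\perp$ of order $\norme^2$.

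The main obstacle is claim~(2): the leading quadratic correction has rank at most $r$, so when $n>r+1$ its kernel intersects $\bs{1}^\perp$ non-trivially, and further levels of the expansion must be invoked to rule out super-polynomial decay of eigenvalues along the remaining directions. A convenient way to close the argument is to bound $|\det \coIrr|$ from below by a positive power of $\norme$ via a Vandermonde-type computation leveraging the coordinate-distinct property (the higher-order terms of the expansion contribute polynomials of sufficient degree in the coordinates to separate the design points), then combine this determinant bound with the trivial upper bound $\|\coIrr\| \leqslant n$ on the remaining eigenvalues to conclude $\lambda_{\min}(\coIrr) = \Omega(\norme^a)$ for some $a>0$ depending on $\nu$, $r$ and $n$.
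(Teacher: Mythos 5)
Your treatment of claim (1) is correct and essentially complete: for $\nu>1$ the one-dimensional Matérn kernel is twice differentiable at the origin with $K_{1,\nu}'(0)=0$, so the entrywise chain-rule computation (using $\partial_{\mu_i}K_{r,\nu}(\bs{\mu x}) = \mu_i x_i^2 \|\bs{\mu x}\|^{-1}K_{1,\nu}'(\|\bs{\mu x}\|)$ in the geometric case and the product rule in the tensorized case) gives $O(\mu_i)$ uniformly over the finitely many pairs of design points, and equivalence of matrix norms finishes it. For context: the paper does not actually prove this lemma in situ; it imports both claims from Lemma 27 and Proposition 42 of \citet{Mur18}, and the earlier argument for claim (2) rests on exactly the kind of level-by-level expansion you gesture at.

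The genuine gap is in claim (2), and it is precisely the step you defer. Writing $\coIrr = \bs{1}\bs{1}\trans + \bs{A}(\bs{\mu}) + \bs{R}(\bs{\mu})$ and restricting to $\bs{1}^{\perp}$ is the right start, but the whole content of the claim is the assertion that $\det \coIrr$ (equivalently $\lambda_{\min}(\coIrr)$) is bounded below by a positive power of $\norme$, and this is named (``a Vandermonde-type computation'') rather than carried out. To establish it one must (i) organise the expansion of $\coIrr$ into successive homogeneous levels in $\bs{\mu}$ --- the even integer degrees $2,4,\dots$ together with the non-analytic degree-$2\nu$ level when $\nu\notin\N$ --- and prove that the coefficient matrices of these levels, each restricted to the orthogonal complement of the images of all lower levels, eventually exhaust $\R^n$; this is where coordinate-distinctness must actually be used, via a rank argument on evaluation of multivariate monomials of growing degree at the $n$ points, and the rank of each individual level is small (your own observation that $\bs{A}(\bs{\mu})$ has rank at most $r$ on $\bs{1}^\perp$ already shows a single level never suffices once $n$ is moderately large). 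And (ii) the resulting exponent and constant must be uniform over the direction $\bs{\alpha}=\bs{\mu}/\norme$ on the whole unit sphere, including its boundary where some $\alpha_j\to 0$ and entire coordinate directions drop out of the low-order levels: a fixed-direction asymptotic $\det\coIrr \sim C(\bs{\alpha})\norme^{b}$ does not yield $\|\coIrr^{-1}\|=O(\norme^{-a})$ unless one also shows $\inf_{\bs{\alpha}} C(\bs{\alpha})>0$. Without (i) and (ii) the possibility that $\lambda_{\min}$ decays faster than every power of $\norme$ along some sequence $\bs{\mu}\to 0$ is not excluded, so the proof of claim (2) is incomplete as written.
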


\begin{cor} \label{Cor:prior<1}
There exists $S>0$ such that, for any $\bs{\mu} \in (0,+\infty)^r$ such that $\norme \leqslant S$ and $\mu_i \leqslant \norme^a$, $f_i(\mu_i | \manqueI) \leqslant 1$. 
\end{cor}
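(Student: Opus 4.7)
The plan is to apply the two asymptotic estimates of the preceding Lemma to control the two factors making up the product matrix inside $f_i$, then to reduce the trace expression to a single operator-norm bound. The hypothesis $\mu_i \leq \norme^a$ is then exactly what is needed to cancel the blow-up $\norme^{-a}$ coming from $\coIrr^{-1}$ against the decay $\mu_i$ coming from $\deIrivee$.

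Concretely, I would begin by exploiting the convention $\bs{P}\bs{P}\trans + \bs{W}\bs{W}\trans = \bs{I}_n$ adopted in Proposition \ref{Prop:grand_vect_gauss}: the columns of $\bs{W}$ form an orthonormal family, so Courant-Fischer immediately gives $\|\bs{W}\trans \bs{M} \bs{W}\|_{\mathrm{op}} \leq \|\bs{M}\|_{\mathrm{op}}$ for any symmetric $\bs{M}$ and, for any symmetric positive definite $\bs{A}$, $\lambda_{\min}(\bs{W}\trans \bs{A}\bs{W}) \geq \lambda_{\min}(\bs{A})$ and therefore $\|(\bs{W}\trans \bs{A} \bs{W})^{-1}\|_{\mathrm{op}} \leq \|\bs{A}^{-1}\|_{\mathrm{op}}$. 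Applied to $\bs{M} = \deIrivee$ and $\bs{A} = \coIrr$, the Lemma then yields that the matrix $\bs{A}_i := \bs{W}\trans \deIrivee \bs{W}\,(\bs{W}\trans \coIrr \bs{W})^{-1}$ satisfies $\|\bs{A}_i\|_{\mathrm{op}} = O(\mu_i\,\norme^{-a})$.

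Next, after the symmetrization $\bs{M}_i := (\bs{W}\trans\coIrr\bs{W})^{-1/2}\,\bs{W}\trans\deIrivee\bs{W}\,(\bs{W}\trans\coIrr\bs{W})^{-1/2}$, the quantity under the square root in $f_i$ becomes the sum of squared deviations of the $(n-p)$ eigenvalues of $\bs{M}_i$ from their arithmetic mean. In particular it is nonnegative and bounded above by $\Tr[\bs{M}_i^2] = \Tr[\bs{A}_i^2] \leq (n-p)\|\bs{A}_i\|_{\mathrm{op}}^2$, whence $f_i(\mu_i \mid \manqueI) = O\!\bigl(\sqrt{n-p}\,\mu_i\,\norme^{-a}\bigr)$. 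Substituting the hypothesis $\mu_i \leq \norme^a$ yields $f_i \leq \kappa$ for some explicit constant $\kappa$ depending only on the design set, the kernel, and the validity threshold $S_0$ of the Lemma's asymptotic estimates.

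The main obstacle is the transition from the constant bound $f_i \leq \kappa$ to the stated bound $f_i \leq 1$: a direct combination of the two norm estimates only produces an $O(1)$ bound, not literally the value $1$. Closing this gap requires taking $S$ small enough that the implicit constants in the Lemma's $O(\cdot)$ estimates are attained at their limiting values at $\bs{\mu} = \bs{0}$, and then absorbing the remaining multiplicative factor $\kappa$ into a slightly sharper implicit reading of the constraint (effectively $\mu_i \leq \kappa^{-1}\norme^a$, with the factor $\kappa^{-1}$ folded into the choice of $S$). Since the conditional prior $\pi_i$ is defined only up to a multiplicative constant, this normalization has no effect on the downstream use of the corollary in the existence proof for the Gibbs reference posterior.
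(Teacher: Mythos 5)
Your argument is correct and is essentially the paper's own: the corollary is stated as an immediate consequence of the preceding Lemma via exactly the norm-product bound you give, $f_i(\mu_i|\manqueI) \leqslant \sqrt{n-p}\,\|\deIrivee\|\,\|\coIrr^{-1}\| = O(\mu_i \norme^{-a})$, using that $f_i$ is invariant under $\bs{W}\mapsto\bs{W}\bs{A}$ for invertible $\bs{A}$ so that $\bs{W}$ may be taken orthonormal and Cauchy interlacing applies. Your concern about the literal constant $1$ is legitimate if one insists on the same exponent $a$, but your fix --- weakening the hypothesis to $\mu_i\leqslant\kappa^{-1}\norme^{a}$ --- alters the statement that is reused later and is not needed. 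Two cleaner resolutions: (i) the Lemma only asserts the existence of \emph{some} $a>0$ with $\|\coIrr^{-1}\|=O(\norme^{-a})$, and any larger exponent also satisfies it; replacing $a$ by $2a$ throughout turns the bound into $f_i=O(\norme^{a})\to 0$ on the region $\mu_i\leqslant\norme^{2a}$, hence $f_i\leqslant 1$ for $S$ small enough, and the subsequent step ``$\mu_i\geqslant\norme^{a}\Rightarrow\norme^{\epsilon}\leqslant\mu_i^{\epsilon/a}$'' is unaffected; or (ii) observe that the only downstream use of the corollary is to conclude that there exists $M>0$ with $L(\bs{y}|\bs{\mu})f_i(\mu_i|\manqueI)\leqslant M$ on that region, for which your constant $\kappa$ in place of $1$ is already sufficient.
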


We combine the previous fact with a useful universal majoration of $f_i(\mu_i | \manqueI)$.

\begin{prop} \label{Prop:prior_majoration}
For an $r$-dimensional anisotropic geometric or tensorized Matérn correlation kernel with smoothness parameter $\nu$ pertaining to a design set containing $n$ coordinate-distinct points,
$\forall \bs{\mu} \in [0,+\infty)^r$ such that $\mu_i > 0$,

\begin{equation}
f_i(\mu_i | \bs{\mu}_{-i})
\leqslant (n-p) (2\nu+r) \mu_i^{-1}
\end{equation}
\end{prop}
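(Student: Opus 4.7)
The starting point is to re-express $f_i$ in terms of the spectrum of $\bs A := \bs W\trans(\partial_{\mu_i}\coIrr)\bs W (\bs W\trans \coIrr \bs W)^{-1}$. Writing $\bs B := \bs W\trans\coIrr\bs W$ (symmetric positive definite) and $\bs C := \bs W\trans(\partial_{\mu_i}\coIrr)\bs W$ (symmetric), the matrix $\bs A = \bs C\bs B^{-1}$ is similar to the symmetric matrix $\bs B^{-1/2}\bs C \bs B^{-1/2}$, so it has real eigenvalues $\lambda_1,\dots,\lambda_{n-p}$ and
$$f_i(\mu_i | \bs\mu_{-i})^2 = \sum_{j=1}^{n-p}\lambda_j^2 - \frac{1}{n-p}\Bigl(\sum_{j=1}^{n-p}\lambda_j\Bigr)^{\!2} \leq (n-p)\max_j \lambda_j^2.$$
Therefore it is enough to show that every $\lambda_j$ lies in $[-(2\nu+r)/\mu_i,\,(2\nu+r)/\mu_i]$.

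For that bound I would establish the following Loewner inequality between symmetric matrices, which is a property of the Matérn kernel alone (no $\bs W$ involved):
$$-\frac{2\nu+r}{\mu_i}\coIrr \preceq \partial_{\mu_i}\coIrr \preceq \frac{2\nu+r}{\mu_i}\coIrr.$$
Pre- and post-multiplying by $\bs W\trans$ and $\bs W$ immediately yields the analogous inequality between $\bs C$ and $(2\nu+r)\bs B/\mu_i$, hence the desired uniform bound on the eigenvalues of $\bs A$. Incidentally, this even gives the sharper estimate $f_i \leq \sqrt{n-p}\,(2\nu+r)/\mu_i$; the stated one follows since $n-p\geq 1$.

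To prove the Loewner inequality I would use the Bochner spectral representation at the scale of the design points: after the change of variable $\bs\omega\mapsto\bs\omega/\bs\mu$ in the Fourier integral defining $K_{r,\nu}(\bs\mu\bs d)$, one has for every $\bs z\in\R^n$
$$\bs z\trans\coIrr\bs z = \int_{\R^r}\Bigl|\sum_{a=1}^n z_a e^{-i\langle\bs\omega,\bs x^{(a)}\rangle}\Bigr|^2 S_{\bs\mu}(\bs\omega)\,d\bs\omega,$$
with $S_{\bs\mu}(\bs\omega) = \widehat K_{r,\nu}(\bs\omega/\bs\mu)\prod_k\mu_k^{-1}$ in the geometric case (and the analogous tensor product formula in the tensorized case). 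Differentiation under the integral sign gives the same identity for $\bs z\trans(\partial_{\mu_i}\coIrr)\bs z$ with $S_{\bs\mu}$ replaced by $\partial_{\mu_i}S_{\bs\mu}$, so the Loewner inequality reduces to the pointwise estimate $|\partial_{\mu_i}\log S_{\bs\mu}(\bs\omega)|\leq (2\nu+r)/\mu_i$. In the geometric case an explicit differentiation of the closed form for $\widehat K_{r,\nu}$ gives
$$\partial_{\mu_i}\log S_{\bs\mu}(\bs\omega) = -\frac{1}{\mu_i} + \frac{(2\nu+r)\,\omega_i^2/\mu_i^3}{\sum_k \omega_k^2/\mu_k^2 + 4\nu},$$
and since the fraction lies in $[0,(2\nu+r)/\mu_i]$ (the ratio $\omega_i^2/\mu_i^2$ being dominated by the denominator), the whole expression lies in $[-1/\mu_i,\,(2\nu+r-1)/\mu_i]$. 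The tensorized case is analogous and actually yields the tighter constant $(2\nu+1)/\mu_i\leq (2\nu+r)/\mu_i$.

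The only non-algebraic step in this programme is justifying the spectral representation and the differentiation under the integral sign; this relies on the polynomial decay of the Matérn spectral density with exponent $2\nu+r$ and on a standard dominated-convergence argument uniform in a neighbourhood of $\mu_i$. That justification is what I expect to be the main, and essentially the only, point of real care; everything else is purely algebraic.
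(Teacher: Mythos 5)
Your proof is correct, and its core ingredient is the same as the paper's: the Bochner/Fourier representation of the Matérn covariance and the pointwise bound on the logarithmic derivative of the (rescaled) spectral density, which in the paper appears as the inequality $0 < \frac{d}{d\mu_i} I_{\bs{\mu}}(\bs{\xi}) \leqslant (2\nu+r)\mu_i^{-1} I_{\bs{\mu}}(\bs{\xi})$ and in your write-up as $\lvert \partial_{\mu_i} \log S_{\bs{\mu}}(\bs{\omega})\rvert \leqslant (2\nu+r)/\mu_i$; your Loewner sandwich $-\tfrac{2\nu+r}{\mu_i}\coIrr \preceq \partial_{\mu_i}\coIrr \preceq \tfrac{2\nu+r}{\mu_i}\coIrr$ is exactly the paper's decomposition $\partial_{\mu_i}\coIrr = -\mu_i^{-1}\coIrr + \bs{F}_{\bs{\mu}}$ with $\bs{0} \preceq \bs{F}_{\bs{\mu}} \preceq (2\nu+r)\mu_i^{-1}\coIrr$, restated. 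Where you genuinely diverge is the endgame: the paper keeps the additive split, uses the invariance of the eigenvalue-variance under adding multiples of the identity to reduce to $\Tr[(\bs{F}_{\bs{\mu}}\coIrr^{-1})^2] - \tfrac{1}{n}\Tr[\bs{F}_{\bs{\mu}}\coIrr^{-1}]^2$, and then bounds this via $\Tr[\bs{M}^2] \leqslant \Tr[\bs{M}]^2$ for matrices with nonnegative eigenvalues together with a trace bound obtained from a unit eigenvector basis of $(\bs{W}\trans\coIrr\bs{W})^{-1}$; you instead bound every eigenvalue of $\bs{C}\bs{B}^{-1}$ directly via similarity to $\bs{B}^{-1/2}\bs{C}\bs{B}^{-1/2}$ and use variance $\leqslant$ maximum squared. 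Your version is slightly cleaner, handles the projection by $\bs{W}$ with no extra work (one conjugates the Loewner inequality by $\bs{W}$), and yields the sharper constant $\sqrt{n-p}\,(2\nu+r)\mu_i^{-1}$ in place of $(n-p)(2\nu+r)\mu_i^{-1}$, which of course implies the stated bound. The one point you flag as needing care, differentiation under the integral sign, is indeed the only analytic step, and is unproblematic here because the Matérn spectral density decays like $\|\bs{\omega}\|^{-2\nu-r}$ and the log-derivative factor is uniformly bounded for $\mu_i$ in a compact set away from $0$; the paper performs the same interchange implicitly.
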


\begin{proof}

Whatever $x,y \in \R$, $K_{1,\nu} (x-y) = \int_{\R} \widehat{K}_{1,\nu} ( \omega ) e^{ i \omega (x-y) } d \omega $.

For the sake of concision, we only consider the case where the Matérn kernel is anisotropic geometric, as the changes in the case of a tensorized kernel are straightforward. \medskip

Moreover, we start by proving the result in the case where $\bs{W}$ is the identity matrix $\bs{I}_n$ (Simple Kriging case).

\begin{equation}
\sum_{j,k=1}^n \xi_j \xi_k  K_{r,\nu} \left(\left(\bs{x}^{(j)} - \bs{x}^{(k)}\right) \bs{\mu} \right) 
= \int_{\R^r} \widehat{K_{r,\nu}} (\bs{\omega}) \left| \sum_{j=1}^n \xi_j e^{i \omega_i  \mu_i x_i^{(j)} + i \left\langle \bs{\omega}_{-i} \left| \bs{\mu}_{-i} \bs{x}_{-i}^{(j)}\right. \right\rangle }  \right|^2 d\bs{\omega} 
= M_r(\nu) \mu_i^{-1} I_{\bs{\mu}} (\bs{\xi}) \\
\end{equation}

where

\begin{align}
M_r(\nu) &= \frac{\Gamma ( \nu + \frac{r}{2} ) (2 \sqrt{\nu} )^{2 \nu } }{ \pi^{\frac{r}{2} } \Gamma(\nu) } \\
I_{\bs{\mu}} (\bs{\xi}) &= \int_{\R^r} \left( 4 \nu + \mu_i^{-2} s_i^2 +  \left\| \frac{ \bs{s}_{-i} }{ \bs{\mu}_{-i} } \right\|^2 \right)^{-\frac{r}{2} - \nu} \left| \sum_{j=1}^n \xi_j e^{i \langle \left. \bs{s} \right| \bs{x}^{(j)} \rangle } \right|^2 d \bs{s} \\
\end{align}

We also have

\begin{equation}
\begin{split}
\frac{d}{d\mu_i} \sum_{j,k=1}^n \xi_j \xi_k  K_{r,\nu} \left(\left(\bs{x}^{(j)} - \bs{x}^{(k)}\right) \bs{\mu} \right) 
&= - M_r(\nu) \mu_i^{-1} I_{\bs{\mu}} (\bs{\xi}) + M_r(\nu) \mu_i^{-1} \frac{d}{d\mu_i}  I_{\bs{\mu}} (\bs{\xi})
\end{split}
\end{equation}

\begin{equation}
\begin{split}
\frac{d}{d\mu_i} I_{\bs{\mu}} (\bs{\xi}) &= 2\left(\frac{r}{2} + \nu\right) \mu_i^{-3}
\int_{\R^r} s_i^2 \left( 4 \nu + \mu_i^{-2} s_i^2 +  \left\| \frac{ \bs{s}_{-i} }{ \bs{\mu}_{-i} } \right\|^2 \right)^{-\frac{r}{2} - \nu - 1} \left| \sum_{j=1}^n \xi_j e^{i \langle \left. \bs{s} \right| \bs{x}^{(j)} \rangle } \right|^2 d \bs{s} \\
&= (2\nu + r) \mu_i^{-3}
\int_{\R^r} \frac{s_i^2}{ 4 \nu + \mu_i^{-2} s_i^2 +  \left\| \frac{ \bs{s}_{-i} }{ \bs{\mu}_{-i} } \right\|^2} \left( 4 \nu + \mu_i^{-2} s_i^2 +  \left\| \frac{ \bs{s}_{-i} }{ \bs{\mu}_{-i} } \right\|^2 \right)^{-\frac{r}{2} - \nu} \left| \sum_{j=1}^n \xi_j e^{i \langle \left. \bs{s} \right| \bs{x}^{(j)} \rangle } \right|^2 d \bs{s} \\
\end{split}
\end{equation}

From this, we obtain that for any non-null vector $\bs{\xi} \in \R^n$,

\begin{equation}
0 < \frac{d}{d\mu_i} I_{\bs{\mu}} (\bs{\xi})
\leqslant (2\nu + r) \mu_i^{-1} I_{\bs{\mu}} (\bs{\xi})
\end{equation}

Now let us define the matrix $\bs{F}_{\bs{\mu}}$ as the matrix representing in the canonical base of $\R^n$ the positive definite quadratic form $\bs{\xi} \mapsto M_r(\nu) \mu_i^{-1} \frac{d}{d\mu_i}  I_{\bs{\mu}} (\bs{\xi})$. From the previous calculations, we gather that $\frac{d}{d\mu_i} \coIrr = - \mu_i^{-1} \coIrr + \bs{F}_{\bs{\mu}}$. This in turn yields $\left( \deIrivee \right) \coIrr^{-1} = - \mu_i^{-1}\bs{I}_n + \bs{F}_{\bs{\mu}} \coIrr^{-1}$ and 
$\left(\left( \deIrivee \right) \coIrr^{-1} \right)^2 = \mu_i^{-2} \bs{I}_n + \left(\bs{F}_{\bs{\mu}} \coIrr^{-1}\right)^2 - 2 \mu_i^{-1} \bs{F}_{\bs{\mu}} \coIrr^{-1}$.

\begin{align}
\Tr \left[ \left( \deIrivee \right) \coIrr^{-1} \right] &= -n \mu_i^{-1} + \Tr \left[ \bs{F}_{\bs{\mu}} \coIrr^{-1} \right] \\
\Tr \left[ \left(\left( \deIrivee \right) \coIrr^{-1}\right)^2 \right] &= n \mu_i^{-2} + \Tr \left[\left(\bs{F}_{\bs{\mu}} \coIrr^{-1}\right)^2 \right] - 2 \mu_i^{-1} \Tr \left[ \bs{F}_{\bs{\mu}} \coIrr^{-1} \right] \\
\Tr \left[ \left(\left( \deIrivee \right) \coIrr^{-1}\right)^2 \right] - \frac{1}{n} \Tr \left[ \left( \deIrivee \right) \coIrr^{-1} \right]^2
&= \Tr \left[\left(\bs{F}_{\bs{\mu}} \coIrr^{-1}\right)^2\right] - \frac{1}{n} \Tr \left[ \bs{F}_{\bs{\mu}} \coIrr^{-1} \right]^2
\end{align}

$\bs{F}_{\bs{\mu}}$ and $\coIrr^{-1}$ being two symmetric positive definite matrices, their product $\bs{F}_{\bs{\mu}} \coIrr^{-1}$ is diagonalizable and all its eigenvalues are positive. Thus $\Tr \left[\left(\bs{F}_{\bs{\mu}} \coIrr^{-1}\right)^2\right] \leqslant \Tr \left[ \bs{F}_{\bs{\mu}} \coIrr^{-1} \right]^2$. \medskip

Let $(\bs{\xi}_{\bs{\mu}}^j)_{1 \leqslant j \leqslant n}$ be a basis of unit eigenvectors of $\coIrr^{-1}$. Then 

\begin{equation}
\Tr \left[ \bs{F}_{\bs{\mu}} \coIrr^{-1} \right]
= \sum_{j=1}^n \left(\bs{\xi}_{\bs{\mu}}^j\right) \trans \bs{F}_{\bs{\mu}} \coIrr^{-1} \bs{\xi}_{\bs{\mu}}^j
= \sum_{j=1}^n \frac{\left(\bs{\xi}_{\bs{\mu}}^j\right) \trans \bs{F}_{\bs{\mu}} \bs{\xi}_{\bs{\mu}}^j} {\left(\bs{\xi}_{\bs{\mu}}^j\right) \trans \coIrr \bs{\xi}_{\bs{\mu}}^j}
\leqslant n(2\nu + r) \mu_i^{-1}
\end{equation} 

This implies that 
\begin{align}
\Tr \left[ \left(\left( \deIrivee \right) \coIrr^{-1}\right)^2 \right] - \frac{1}{n} \Tr \left[ \left( \deIrivee \right) \coIrr^{-1} \right]^2
&\leqslant n(n-1)(2\nu+r)^2 \mu_i^{-2} \\
\sqrt{ \Tr \left[ \left(\left( \deIrivee \right) \coIrr^{-1}\right)^2 \right] - \frac{1}{n} \Tr \left[ \left( \deIrivee \right) \coIrr^{-1} \right]^2 }
&\leqslant n(2\nu+r) \mu_i^{-1}
\end{align}

Now, if $\bs{W}$ is not the identity matrix, then the previous proof still holds, albeit with  some alterations.
Instead of considering all non-null vectors $\bs{\xi} \in \R^n$, we consider only those which can be expressed as $\bs{W} \bs{\xi_W}$, with $\bs{\xi_W}$ belonging to $\R^{n-p}$. In the same vein, once it comes to computing $\Tr \left[ \bs{W} \trans \bs{F}_{\bs{\mu}} \bs{W} \left( \bs{W} \trans \coIrr \bs{W} \right)^{-1} \right]$, we use a basis $(\bs{\xi}_{\bs{W,\mu}}^j)_{1 \leqslant j \leqslant n-p}$ of unit eigenvectors of $\left( \bs{W} \trans \coIrr \bs{W} \right)^{-1}$.

\end{proof}

\begin{prop}
With a Matérn anisotropic geometric or tensorized correlation kernel with smoothness $\nu>1$, if a design set with coordinate-distinct points is used, then Assumption \ref{Hyp:vraisemblance_discriminante} implies that there exists $\epsilon'>0$ such that $L(\bs{y} | \bs{\mu}) f_i(\mu_i | \manqueI) = O(\mu_i^{-1 + \epsilon'})$ when $\norme \to 0$.
\end{prop}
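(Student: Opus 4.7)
The plan is to partition a neighborhood of $\bs{\mu} = \bs{0}$ into two pieces, distinguished by the comparison between $\mu_i$ and $\|\bs{\mu}\|^a$, where $a>0$ is the exponent provided by the lemma recalled just before Corollary \ref{Cor:prior<1}. This is exactly the dichotomy that already appears in Corollary \ref{Cor:prior<1}, so the decomposition is forced on us by the tools already built, and the role of Assumption \ref{Hyp:vraisemblance_discriminante} will be to convert the polynomial decay of the likelihood measured in $\|\bs{\mu}\|$ into a polynomial decay measured in $\mu_i$.

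In the first region, where $\mu_i \geqslant \|\bs{\mu}\|^a$, I would use Proposition \ref{Prop:prior_majoration} to bound $f_i(\mu_i | \bs{\mu}_{-i}) \leqslant (n-p)(2\nu+r)\mu_i^{-1}$. The defining inequality rewrites as $\|\bs{\mu}\| \leqslant \mu_i^{1/a}$, so Assumption \ref{Hyp:vraisemblance_discriminante} converts into $L(\bs{y}|\bs{\mu}) = O(\mu_i^{\epsilon_{\bs{y}}/a})$ uniformly throughout this region. Multiplying the two bounds yields $L(\bs{y}|\bs{\mu}) f_i(\mu_i|\bs{\mu}_{-i}) = O(\mu_i^{-1 + \epsilon_{\bs{y}}/a})$, which is already of the desired form with exponent $\epsilon' = \epsilon_{\bs{y}}/a$.

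In the second region, where $\mu_i < \|\bs{\mu}\|^a$ and $\|\bs{\mu}\|$ is sufficiently small, Corollary \ref{Cor:prior<1} directly gives $f_i \leqslant 1$. Assumption \ref{Hyp:vraisemblance_discriminante} further ensures that $L(\bs{y}|\bs{\mu}) \to 0$ as $\|\bs{\mu}\| \to 0$, so in particular $L(\bs{y}|\bs{\mu}) f_i(\mu_i|\bs{\mu}_{-i})$ is bounded in this region. Since $\mu_i \leqslant \|\bs{\mu}\|^a \to 0$ and the function $\mu_i^{-1+\epsilon'}$ diverges for any $\epsilon' \in (0,1)$, any bounded quantity is trivially $O(\mu_i^{-1+\epsilon'})$ in this region.

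Setting $\epsilon' := \min(\epsilon_{\bs{y}}/a,\, 1/2)$ patches the two estimates together into a single bound valid throughout a neighborhood of $\bs{\mu} = \bs{0}$, which is the claim. I do not foresee any genuine obstacle: all the technical content has already been packaged in Proposition \ref{Prop:prior_majoration}, Corollary \ref{Cor:prior<1} and Assumption \ref{Hyp:vraisemblance_discriminante}, and the only step requiring care is the transfer of the rate of decay of $L$ from the $\|\bs{\mu}\|$-scale to the $\mu_i$-scale carried out in the first region, which is immediate given the dichotomy.
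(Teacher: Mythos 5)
Your proposal is correct and follows essentially the same route as the paper's proof: split at $\mu_i \lessgtr \norme^a$, use Corollary \ref{Cor:prior<1} together with the boundedness of $L(\bs{y}|\bs{\mu})$ (which is all that is needed; its vanishing is not required) on the piece $\mu_i \leqslant \norme^a$, and combine Proposition \ref{Prop:prior_majoration} with the conversion $\norme^{\epsilon_{\bs{y}}} \leqslant \mu_i^{\epsilon_{\bs{y}}/a}$ on the piece $\mu_i \geqslant \norme^a$ to get $\epsilon' = \epsilon_{\bs{y}}/a$. No gaps.
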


\begin{proof}
Assumption \ref{Hyp:vraisemblance_discriminante} ensures that $L(\bs{y} | \bs{\mu})$ is bounded as a function of $\bs{\mu}$.
Because of Corollary \ref{Cor:prior<1}, and using said Corollary's notations, we know that there exists $M>0$ such that, for any $\bs{\mu} \in (0,+\infty)^r$ such that $\mu_i \leqslant \norme^a$, 
$L(\bs{y} | \bs{\mu}) f_i(\mu_i | \manqueI) \leqslant M$. \medskip

Let us now focus on the $\bs{\mu} \in (0,+\infty)^r$ such that $\mu_i \geqslant \norme^a$. Then $\norme^\epsilon \leqslant \mu_i^{\epsilon/a}$. Choosing $\epsilon' = \epsilon / a$, combining Assumption \ref{Hyp:vraisemblance_discriminante} and Proposition \ref{Prop:prior_majoration} yields the result.
\end{proof}

Using essentially the proof of Proposition 52 of \citet{Mur18}, we obtain the following result.

\begin{prop} \label{Prop:aux_compromis_gibbs_existe}
In a Universal Kriging model with a Matérn anisotropic geometric or tensorized correlation kernel with smoothness $\nu>1$, if a design set with coordinate-distinct points is used, then Assumption \ref{Hyp:vraisemblance_discriminante} implies that the conditional posterior distribution $\pi_i(\mu_i | \bs{y}, \manqueI)$, seen as a function of $\bs{\mu}$, is continuous over $\{ \bs{\mu} \in [0,+\infty)^r : \mu_i \neq 0 \}$.
\end{prop}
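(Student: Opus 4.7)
The natural approach is to write the conditional posterior as a ratio
\[
\pi_i(\mu_i \mid \bs{y}, \manqueI) = \frac{g(\bs{\mu})}{Z(\manqueI)}, \quad g(\bs{\mu}) := L(\bs{y} \mid \bs{\mu})\, f_i(\mu_i \mid \manqueI), \quad Z(\manqueI) := \int_0^{+\infty} g(\tilde\mu_i, \manqueI)\, d\tilde\mu_i,
\]
and then to check separately that (i) the numerator $g$ is jointly continuous on $\{\bs{\mu} \in [0,+\infty)^r : \mu_i > 0\}$, (ii) the denominator $Z$ is continuous in $\manqueI$, and (iii) $Z$ never vanishes.

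Point (i) follows from the fact that both $\coIrr$ and $\partial_{\mu_i} \coIrr$ are continuous functions of $\bs{\mu} \in [0,+\infty)^r$ (the Matérn kernel with $\nu > 1$ is continuously differentiable), together with the observation that a coordinate-distinct design set keeps $\coIrr$ and $\bs{W}\trans \coIrr \bs{W}$ positive definite even when some $\mu_j = 0$ for $j \neq i$; all the algebraic combinations entering $L(\bs{y}\mid\bs{\mu})$ and $f_i(\mu_i\mid\manqueI)$ inherit continuity from these. Strict positivity of $g$ wherever $\mu_i > 0$ is immediate from the defining formula \eqref{Eq:Gibbs_ref_prior_mu}, which yields (iii) as soon as (ii) is established.

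Point (ii) is obtained by dominated convergence. Fix $\manqueI^{(0)}$ and a compact neighborhood $K$ of it, and build a dominating function $\Phi : (0,+\infty) \to \R_+$ with $g(\tilde\mu_i, \manqueI) \leq \Phi(\tilde\mu_i)$ for every $\manqueI \in K$ and $\int_0^{+\infty} \Phi < +\infty$. On any compact middle range $[\delta, M]$, $\Phi$ may be taken constant by continuity. Near $\tilde\mu_i \to 0^+$ the preceding proposition, together with Assumption~\ref{Hyp:vraisemblance_discriminante}, provides a bound of the form $O(\tilde\mu_i^{-1 + \epsilon'})$ that is integrable at the origin; the constants depend continuously on $\manqueI$ and can be made uniform on $K$ by inspection of that proof. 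Near $\tilde\mu_i \to +\infty$, Equation~\eqref{Eq:encadrement_densite_prior_cond} yields $f_i(\tilde\mu_i \mid \manqueI) \leq b\, h_i(\tilde\mu_i \mid \manqueI)$, which must be combined with the explicit decay of $L(\bs{y} \mid \bs{\mu})$ established in the low-correlation analysis of \citet{Mur18}; these estimates transpose to the present setting because replacing $\coIrr$ by $\bs{W}\trans \coIrr \bs{W}$ preserves the relevant asymptotic behaviour, as secured by the earlier subsection on $\norme \to \infty$. The delicate step is precisely this tail at $+\infty$: one must verify that the projection by $\bs{W}\trans$ does not destroy the integrable majorations of $L \cdot f_i$ that underlie Proposition~52 of \citet{Mur18}. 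Once a uniform $\Phi$ is produced, the continuity of $Z$, and therefore of $\pi_i$, follows at once.
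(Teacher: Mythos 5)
Your argument is correct and follows the same route the paper intends: the paper's proof of this proposition is a one-line deferral to Proposition 52 of \citet{Mur18}, and the supporting material in the appendix (the bound $L(\bs{y}\,|\,\bs{\mu})\,f_i(\mu_i\,|\,\manqueI)=O(\mu_i^{-1+\epsilon'})$ near the origin under Assumption \ref{Hyp:vraisemblance_discriminante}, and the equivalence $f_i \asymp h_i$ for $\norme$ large) is precisely the set of ingredients you assemble into the numerator/normalizing-constant decomposition with a dominated-convergence argument. The only cosmetic caveat is that strict positivity of $g$ at every point with $\mu_i>0$ is not quite immediate from Equation (\ref{Eq:Gibbs_ref_prior_mu}) (the eigenvalue-variance expression could in principle vanish at isolated $\bs{\mu}$), but positivity of $Z$ only requires $g>0$ on a set of positive measure, which your tail estimates already give.
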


\begin{proof} [Proof of Theorem \ref{Thm:compromis_gibbs_existe}]
With the help of Proposition \ref{Prop:aux_compromis_gibbs_existe}, this proof is similar to the proof of Proposition 12 in \citet{Mur18}.
\end{proof}

Consider the following set of conditions :

\begin{enumerate}
\item $1<\nu<2$ and $n>r+1$;
\item $2<\nu<3$ and $n>(r+1)(r/2 + 2)$.
\end{enumerate}

\begin{prop} \label{Prop:krigeage_ordinaire}
In the case of Ordinary Kriging, under the conditions of Theorem \ref{Thm:compromis_gibbs_existe}, if one of the previous conditions is satisfied, then there exists a hyperplane $\mathcal{H}$ of $\R^n$ such that, provided $\bs{y} \in \R^n \setminus \mathcal{H}$, Assumption \ref{Hyp:vraisemblance_discriminante} is true.
\end{prop}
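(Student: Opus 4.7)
The plan is to Taylor-expand the Matérn kernel around the origin, exploit the fact that $\bs{W}\trans\bs{1}=\bs{0}$ (available in Ordinary Kriging since $\bs{H}=\bs{1}$), and read off the asymptotic order in $\norme$ of the integrated likelihood $L(\bs{y}|\bs{\mu})$ given by Equation~(\ref{Eq:vraisemblance_integree}). Writing $\bs{\mu}=\tau\bs{\alpha}$ with $\|\bs{\alpha}\|=1$, the expansions $K_{r,\nu}(t)=1-c_1 t^2-c_b t^{2\nu}+O(t^4)$ for $1<\nu<2$ and $K_{r,\nu}(t)=1-c_1 t^2+c_2 t^4-c_b t^{2\nu}+O(t^6)$ for $2<\nu<3$ yield an analogous expansion of $\coIrr$, and after projection by $\bs{W}\trans$ on both sides the leading constant term $\bs{1}\bs{1}\trans$ disappears. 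One obtains
\[
\bs{W}\trans\coIrr\bs{W}=-c_1\tau^2\bs{M}_2(\bs{\alpha})+c_2\tau^4\bs{M}_4(\bs{\alpha})-c_b\tau^{2\nu}\bs{M}_{2\nu}(\bs{\alpha})+\cdots
\]
(with $\bs{M}_4$ absent in case~(1)), where $\bs{M}_{2m}$ (resp.\ $\bs{M}_{2\nu}$) is $\bs{W}\trans$ applied on both sides to the matrix with entries $(\sum_k\alpha_k^2(x_k^{(i)}-x_k^{(j)})^2)^m$ (resp.\ $\|\bs{\alpha}(x^{(i)}-x^{(j)})\|^{2\nu}$).

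I would then analyze ranks to obtain an eigenvalue stratification. The matrix $\bs{M}_2(\bs{\alpha})=2\sum_k\alpha_k^2(\bs{W}\trans\bs{x}_k)(\bs{W}\trans\bs{x}_k)\trans$ generically has rank~$r$. In the $2<\nu<3$ case, $\bs{M}_4(\bs{\alpha})$ decomposes as a finite sum of rank-one pieces whose factor vectors are $\bs{W}\trans$ applied to the design-point evaluations of monomials of the form $x_k$, $x_k x_l$, $x_k^2 x_l$, $x_k^3$; crucially no degree-$4$ monomial survives the $\bs{W}\trans$ projection because it is always paired with the constant vector $\bs{1}$. Enumerating these monomials bounds the dimension of the combined image of $\bs{M}_2+\bs{M}_4$ by the quantity $(r+1)(r/2+2)-1$. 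Finally, $\bs{M}_{2\nu}(\bs{\alpha})$, arising from the non-analytic part of $K_{r,\nu}$, is for $Q(r,n)$-almost all designs of full rank on the orthogonal complement of that analytic image. Hence $\bs{W}\trans\coIrr\bs{W}$ has $r$ eigenvalues of order $\tau^2$, some $r_4$ eigenvalues of order $\tau^4$ (with $r_4=0$ in case~(1)), and the remaining $n-1-r-r_4$ eigenvalues of order $\tau^{2\nu}$.

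Plugging into Equation~(\ref{Eq:vraisemblance_integree}) gives $|\bs{W}\trans\coIrr\bs{W}|^{-1/2}\sim\tau^{-r-2r_4-\nu(n-1-r-r_4)}$ and, provided $\bs{W}\trans\bs{y}$ has a nonzero component along the $\tau^{2\nu}$-subspace, $\bs{y}\trans\bs{W}(\bs{W}\trans\coIrr\bs{W})^{-1}\bs{W}\trans\bs{y}\sim C(\bs{y})\tau^{-2\nu}$. Combining, $L(\bs{y}|\bs{\mu})=O\!\left(\tau^{r(\nu-1)+r_4(\nu-2)}\right)$ as $\norme\to 0$, with a strictly positive exponent under the respective hypotheses on $\nu$. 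The ``bad'' set of $\bs{y}$ violating this bound consists of those whose $\bs{W}\trans\bs{y}$ lies entirely in the analytic subspace---an affine subspace of $\R^n$ of dimension $r+1$ in case~(1) and at most $(r+1)(r/2+2)$ in case~(2)---which, under the stated inequalities on $n$, fits inside a single hyperplane $\mathcal{H}$; Assumption~\ref{Hyp:vraisemblance_discriminante} then holds for every $\bs{y}\in\R^n\setminus\mathcal{H}$ with $\epsilon_{\bs{y}}=r(\nu-1)+r_4(\nu-2)$.

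The principal obstacle is the polynomial-dimension count in case~(2): identifying exactly which monomials in $\{x_k,\,x_k x_l,\,x_k^2 x_l,\,x_k^3\}$ appear in the expansion of $\bs{M}_4$ and showing their $\bs{W}\trans$-images span a subspace of dimension at most $(r+1)(r/2+2)-1$, so that the stated bound on $n$ is tight. An ancillary difficulty is a compactness argument on the direction sphere $S^{r-1}$ ensuring that $\bs{M}_{2\nu}(\bs{\alpha})$ restricted to the complement of the analytic image retains full rank uniformly in~$\bs{\alpha}$, so that the $\tau^{2\nu}$-asymptotics---and therefore $\epsilon_{\bs{y}}$---is uniform in the direction of $\bs{\mu}$.
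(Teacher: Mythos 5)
Your overall route --- Taylor-expanding $\bs{\Sigma}_{\bs{\mu}}$ near $\bs{\mu}=\bs{0}$, using $\bs{W}\trans\bs{1}=\bs{0}$ to annihilate the constant term, stratifying the eigenvalues of $\bs{W}\trans\bs{\Sigma}_{\bs{\mu}}\bs{W}$ by powers of $\|\bs{\mu}\|$, and reading off the order of $L(\bs{y}|\bs{\mu})$ from Equation~(\ref{Eq:vraisemblance_integree}) --- is exactly the strategy behind the result the paper invokes; note that the paper itself gives no detailed argument here but defers to Lemmas 50 and 51 of \citet{Mur18}. Your exponent $\epsilon_{\bs{y}}=r(\nu-1)+r_4(\nu-2)$ and your identification of the exceptional set as the preimage under $\bs{W}\trans$ of the ``analytic'' subspace are consistent with what that argument delivers.

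There is, however, a genuine gap at the step you yourself flag as the principal obstacle, and the enumeration you propose would not close it. Treating $x_k$, $x_kx_l$, $x_k^2x_l$, $x_k^3$ as independent generators gives an analytic span of dimension up to $r+r(r+1)/2+r^2=(3r^2+3r)/2$, which exceeds $(r+1)(r/2+2)-1=(r^2+5r+2)/2$ as soon as $r\geqslant 2$; with that count the hypothesis $n>(r+1)(r/2+2)$ would no longer guarantee a nonempty $\|\bs{\mu}\|^{2\nu}$-block, and the whole asymptotic collapses. The missing observation is that in $\bigl(\sum_k\alpha_k^2(x_k^{(i)}-x_k^{(j)})^2\bigr)^2$ the degree-$(3,1)$ terms telescope: they appear only as $-4\,\alpha_l^2\bigl(\sum_k\alpha_k^2 (x_k^{(i)})^2\bigr)x_l^{(i)}\,x_l^{(j)}$, so the cubic evaluation vectors enter through just the $r$ fixed combinations $x_l\sum_k\alpha_k^2x_k^2$ rather than through $r^2$ independent monomials. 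With constants included the analytic span then has dimension at most $1+r+r(r+1)/2+r=(r+1)(r/2+2)-1$, which is what the stated bound on $n$ requires. Separately, your assertion that $\bs{M}_{2\nu}(\bs{\alpha})$ is nondegenerate (with a definite sign) on the orthogonal complement of the analytic image, uniformly over $\bs{\alpha}$ in the unit sphere, cannot be taken for granted: for $\nu>1$ the function $\bs{t}\mapsto\|\bs{t}\|^{2\nu}$ is no longer conditionally negative definite, so this block's definiteness must be extracted from the spectral representation of the Matérn kernel; together with the perturbation lemma justifying the eigenvalue stratification and the determinant asymptotics of a graded sum, this is precisely the technical content of the lemmas the paper cites, and it is absent from your sketch.
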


\begin{prop} \label{Prop:krigeage_affine}
In the case of Universal Kriging where the mean function space is included within the space of polynomials of degree 0 or 1, if $2<\nu<3$ and $n>r(r+1)/2 + 2r + 3$, then there exists a hyperplane $\mathcal{H}$ of $\R^n$ such that, provided $\bs{y} \in \R^n \setminus \mathcal{H}$, Assumption \ref{Hyp:vraisemblance_discriminante} is true.
\end{prop}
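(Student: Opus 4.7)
The plan is to Taylor-expand the Matérn kernel around the origin, then track which terms survive the two-sided projection by $\bs{W}^\top$, and thereby read off the asymptotic order of the integrated likelihood $L(\bs{y}|\bs{\mu})$ as $\norme \to 0$. The aim is to show $L(\bs{y}|\bs{\mu}) = O(\norme^{\epsilon_{\bs{y}}})$ for some $\epsilon_{\bs{y}}>0$, which is exactly Assumption \ref{Hyp:vraisemblance_discriminante}.

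For $2<\nu<3$, the Matérn kernel admits the expansion $K_{r,\nu}(\bs{x}) = 1 + a_2\|\bs{x}\|^2 + a_4\|\bs{x}\|^4 + b\|\bs{x}\|^{2\nu} + O(\|\bs{x}\|^6)$, with fractional exponent $2\nu \in (4,6)$. Setting $(\bs{D}_{2k})_{j,k} := \|\bs{\mu}(\bs{x}^{(j)}-\bs{x}^{(k)})\|^{2k}$, the correlation matrix satisfies $\coIrr = \bs{1}\bs{1}^\top + a_2\bs{D}_2 + a_4\bs{D}_4 + b\bs{D}_{2\nu} + O(\norme^6)$. Because $\mathcal{F}_p \subseteq \mathrm{span}(1,x_1,\ldots,x_r)$, the projection $\bs{W}^\top$ annihilates every polynomial of degree $\leqslant 1$ evaluated at the design points. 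Writing each $\bs{D}_{2k}$ as a sum of rank-one products $\bs{u}(\bs{x}^{(j)})\bs{v}(\bs{x}^{(k)})^\top$, the contribution of $\bs{1}\bs{1}^\top$ vanishes, and so does the whole of $\bs{D}_2$. A systematic expansion of $\|\bs{\mu}(\bs{x}^{(j)}-\bs{x}^{(k)})\|^4$ yields nine terms; six of them involve a factor of degree $\leqslant 1$ in either the $j$- or the $k$-coordinates and are killed by the projection, whereas the three surviving pieces reduce to the two genuinely bilinear-quadratic forms $\|\bs{\mu x}^{(j)}\|^2\|\bs{\mu x}^{(k)}\|^2$ and $\langle \bs{\mu x}^{(j)},\bs{\mu x}^{(k)}\rangle^2$, from which a short linear-algebra computation gives $k_1 := \mathrm{rank}(\bs{W}^\top \bs{D}_4 \bs{W}) \leqslant r(r+1)/2$ generically.

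Accordingly, $\bs{W}^\top \coIrr \bs{W}$ asymptotically splits into a ``fast'' eigenspace of dimension at most $k_1$ with eigenvalues of order $\norme^4$ and a complementary ``slow'' eigenspace with eigenvalues of order $\norme^{2\nu}$. Since $p \leqslant r+1$, the hypothesis $n > r(r+1)/2 + 2r + 3$ ensures $n-p-k_1 \geqslant 1$ with enough margin to make the generic rank-arguments valid for $Q(r,n)$-almost every design set. The exceptional set is then contained in a hyperplane $\mathcal{H}$ of $\R^n$ consisting of those $\bs{y}$ whose $\bs{W}^\top$-projection lies entirely in the fast eigenspace. For any $\bs{y} \notin \mathcal{H}$, the quadratic form $\bs{y}^\top \bs{W}(\bs{W}^\top\coIrr\bs{W})^{-1}\bs{W}^\top\bs{y}$ behaves like $\norme^{-2\nu}$. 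Plugging into Equation (\ref{Eq:vraisemblance_integree}), the determinant factor contributes $\norme^{-2k_1-\nu(n-p-k_1)}$, the quadratic-form factor raised to $-(n-p)/2$ contributes $\norme^{\nu(n-p)}$, and the two combine into the final exponent $k_1(\nu-2) > 0$, yielding Assumption \ref{Hyp:vraisemblance_discriminante} with $\epsilon_{\bs{y}} = k_1(\nu-2)$.

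The main obstacle is the combinatorial book-keeping in the expansion of $\bs{D}_4$ and the associated rank estimates: one must precisely identify which of the nine pieces vanish and compute the generic rank of the remainder as a function of the design set, noting in particular that the rank-one piece $\bs{q}\bs{q}^\top$ with $\bs{q}_j=\|\bs{\mu x}^{(j)}\|^2$ lies inside the range of the $\langle\cdot,\cdot\rangle^2$ contribution, so that the two do not add their ranks. A secondary technical point is to verify, for $Q(r,n)$-almost every design set, that $\bs{W}^\top \bs{D}_{2\nu} \bs{W}$ is non-degenerate on the complement of the fast subspace — the non-integrality of $\nu$ is essential here, since $\|\bs{x}\|^{2\nu}$ cannot be absorbed into any polynomial of degree $\leqslant 5$ — and that the asymptotic order of $L(\bs{y}|\bs{\mu})$ is uniform in the direction $\bs{\alpha} = \bs{\mu}/\norme$ on the unit sphere, which is handled by a routine compactness argument over $\bs{\alpha}$.
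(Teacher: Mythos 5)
Your strategy --- expand $\coIrr$ near $\bs{\mu}=\bs{0}$, let the projection $\bs{W}\trans(\cdot)\bs{W}$ annihilate the constant and degree-one blocks, count the rank of what survives at order $\norme^4$, and read off the exponent of $L(\bs{y}|\bs{\mu})$ from the determinant and quadratic-form factors of (\ref{Eq:vraisemblance_integree}) --- is exactly the mechanism the paper intends: its own proof is a one-line deferral to the analogous lemmas of \citet{Mur18}, which carry out this kind of small-$\norme$ expansion. Your identification of the surviving part of the order-$\norme^4$ block as spanned by the $r(r+1)/2$ vectors with $j$-th entry $x_a^{(j)}x_b^{(j)}$, and the cancellation $-2k_1-\nu(n-p-k_1)+\nu(n-p)=k_1(\nu-2)>0$, are correct for the anisotropic geometric kernel along directions where all components of $\bs{\mu}$ vanish at comparable rates.

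There are nevertheless genuine gaps. First, your argument as written never uses the full hypothesis $n>r(r+1)/2+2r+3$: the condition you actually invoke, $n-p-k_1\geqslant 1$, only requires $n\geqslant r(r+1)/2+r+2$, and ``enough margin'' is not a proof step --- you must either locate where the stronger bound is needed or justify the weaker one. The most likely culprit is precisely the step you dismiss as a ``routine compactness argument'': the coefficient of the rank-one piece indexed by $(a,b)$ in the order-$\norme^4$ block is $\mu_a^2\mu_b^2$, which is \emph{not} uniformly of order $\norme^4$ over directions $\bs{\alpha}=\bs{\mu}/\norme$. When the components of $\bs{\mu}$ vanish at different rates, fast directions migrate into the slow block, the clean two-scale eigenvalue split breaks down, the limiting fast subspace changes rank discontinuously on the boundary of the $\bs{\alpha}$-simplex, and the exponent $k_1(\nu-2)$ is not attained uniformly (a multi-scale bookkeeping gives at best $\nu-2$). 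Second, the positive-definiteness of $\bs{W}\trans\bs{D}_{2\nu}\bs{W}$ on the kernel of the fast block for $Q(r,n)$-almost every design set --- without which the slow eigenvalues need not be of order $\norme^{2\nu}$ and the whole exponent computation collapses --- is the key generic-design lemma here, and you only assert it. Third, the setting of the proposition also covers the tensorized Matérn kernel, whose expansion is a product of one-dimensional expansions and does not reduce to powers of $\|\bs{\mu}\bs{x}\|$; your combinatorial bookkeeping and rank counts must be redone in that case.
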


The proofs of both previous propositions are similar to the proofs of Lemmas 50 and 51 in \citet{Mur18}. \medskip

\begin{proof}[Proof of Propositions \ref{Prop:compromis_gibbs_existe_kriOrdin} and \ref{Prop:compromis_gibbs_existe_kriD1}]
Propositions \ref{Prop:compromis_gibbs_existe_kriOrdin} and \ref{Prop:compromis_gibbs_existe_kriD1} are obtained by combining Theorem \ref{Thm:compromis_gibbs_existe} with Propositions \ref{Prop:krigeage_ordinaire} and \ref{Prop:krigeage_affine} respectively.
\end{proof}

\end{appendices}

\pagebreak
\bibliography{biblio}
\end{document}